\newtheorem{thm}{Theorem}[section]
 \newtheorem{cor}{Corollary}[section]
 \newtheorem{lem}{Lemma}[section]
 \newtheorem{prop}{Proposition}[section]
 \newtheorem{defn}{Definition}[section]
\newtheorem{rem}{Remark}[section]
\def\d{\partial}
\def\tilde{\widetilde}
\def\hat{\widehat}
\newcommand\R{\mathbb{R}}
\newcommand\Z{\mathbb{Z}}
\renewcommand{\div}{\mbox{\rm div}\;\!}
\def\cQ{{\mathcal Q}}
\def\cS{{\mathcal S}}
\journal{Journal of Differential Equations}
\begin{document}

\begin{frontmatter}

%% Title, authors and addresses

%% use the tnoteref command within \title for footnotes;
%% use the tnotetext command for theassociated footnote;
%% use the fnref command within \author or \address for footnotes;
%% use the fntext command for theassociated footnote;
%% use the corref command within \author for corresponding author footnotes;
%% use the cortext command for theassociated footnote;
%% use the ead command for the email address,
%% and the form \ead[url] for the home page:
%% \title{Title\tnoteref{label1}}
%% \tnotetext[label1]{}
%% \author{Name\corref{cor1}\fnref{label2}}
%% \ead{email address}
%% \ead[url]{home page}
%% \fntext[label2]{}
%% \cortext[cor1]{}
%% \address{Address\fnref{label3}}
%% \fntext[label3]{}

\title{Global existence and analyticity of $L^p$ solutions to the compressible fluid model of Korteweg
type}

%% use optional labels to link authors explicitly to addresses:
\author[label1]{Zihao Song}\cortext[cor]{Corresponding author}\ead{szh1995@nuaa.edu.cn}
 \author[label1]{Jiang Xu\corref{cor}}\ead{jiangxu\underline{\ }79@nuaa.edu.cn}

 \address[label1]{School of Mathematics, Nanjing
University of Aeronautics and Astronautics, \\ Jiangsu Province, Nanjing 211106, China}

\begin{abstract}
We are concerned with a system of equations in $\mathbb{R}^{d}$ governing the evolution of isothermal, viscous and compressible
fluids of Korteweg type, that can be used as a phase transition model. In the case of zero sound speed $P'(\rho^{\ast})=0$,
it is found that the linearized system admits the \textit{purely} parabolic structure, which enables us to establish the global-in-time existence
and Gevrey analyticity of strong solutions in hybrid Besov spaces of $L^p$-type. Precisely, if the full viscosity coefficient and capillary coefficient satisfy $\bar{\nu}^2\geq4\bar{\kappa}$, then the acoustic waves are not available in compressible fluids. Consequently, the prior $L^2$ boundedness on the low frequencies of density and velocity could be improved to the general $L^p$ version with $1\leq p< d$. The proof mainly relies on new nonlinear Besov (-Gevrey) estimates for product and composition of functions.
\end{abstract}

\begin{keyword} global existence; analyticity; critical Besov space; Navier-Stokes- Korteweg system
\MSC 76N10, 35D35, 35Q35
\end{keyword}

\end{frontmatter}

%% \linenumbers

%% main text
\section{Introduction}\setcounter{equation}{0}
The Korteweg system aims to study the dynamics of a liquid-vapor mixture in the Diffuse Interface (DI) approach, where the phase changes are described through
the variations of the density. The theory formulation was first introduced by Van der Waals \cite{V}, later by Korteweg \cite{K} more than a century ago.  Unfortunately the
basic model provides an infinite number of solutions and in order to select the physically relevant solutions, following from Van der Waals and Korteweg, a capillary term related
to surface tension is added to classical compressible fluid equations, which penalizes the high variations of the density. The rigorous derivation of the corresponding
equations that we shall name the compressible Navier-Stokes-Korteweg system is due to
Dunn and Serrin \cite{DS}. In the barotropic case, it
reads:
\begin{equation}
\left\{
\begin{array}{l}\partial_{t}\rho+\mathrm{div}(\rho u)=0,\\ [1mm]
 \partial_{t}(\rho u)+\mathrm{div}(\rho u \otimes u)+\nabla P(\rho)=\mathcal{A}u+\mathrm{div}K(\rho).\\[1mm]
 \end{array} \right.\label{1.1}
\end{equation}
Here, $\rho=\rho(t,x)\in \mathbb{R}_{+}$ and $u=u(t,x)\in \mathbb{R}^{d}(d\geq3)$ are the unknown functions on $[0,+\infty)\times \mathbb{R}^{d}$, which stand for the density
and velocity field of a fluid, respectively. We neglect the thermal fluctuation so that the pressure $P=P(\rho)$ reduces to a function of $\rho$ only. The notation
$\mathcal{A}u$ is given by $\mathcal{A}u=\mathrm{div}(2\mu D(u))+\nabla(\lambda\mathrm{div}u)$, where the Lam\'{e} coefficients $\lambda$ and $\mu$ (the \textit{bulk} and \textit{shear viscosities}) are density-dependent functions, respectively. In order to ensure the uniform ellipticity of $\mathcal{A}u$,
they are assumed to satisfy
$$\lambda>0, \nu\triangleq\lambda+2\mu>0.$$
Set $D(u)\triangleq\frac{1}{2}(\nabla u+{}^T\!\nabla u)$, which stands for  the  deformation tensor. $\nabla$ and $\div$ are the gradient and divergence operators with respect
to the space variable. In the following, the Korteweg tensor is given by (see \cite{BDDJ})
$$\mathrm{div}K(\rho)=\nabla(\rho\kappa(\rho)\Delta\rho+\frac{1}{2}(\kappa(\rho)+\rho\kappa'(\rho))|\nabla\rho|^{2})-\mathrm{div}(\kappa(\rho)\nabla\rho\otimes\nabla\rho),$$
where $\nabla\rho\otimes\nabla\rho$ stands for the tensor product $(\partial_j\rho\partial_k\rho)_{jk}$. The capillarity coefficient $\kappa>0$
may depend on $\rho$ in general.

The initial condition of System \eqref{1.1} is prescribed by
\begin{equation}\label{Eq:1.2}
\left(\varrho,u\right)|_{t=0}=\left(\varrho _{0}(x),u_{0}(x)\right),\  x\in \R^{d}.
\end{equation}
In this paper, we investigate the Cauchy problem  \eqref{1.1}-\eqref{Eq:1.2}, where initial date tend to a constant equilibrium $(\rho^{\ast},0)$ with $\rho^{\ast}>0$.
%Throughout the paper, we
%shall assume that $P(\rho), \lambda(\rho), \mu(\rho)$ and $\kappa(\rho)$ are real
%analytic in some neighborhood of $\rho^{\ast}$.

If the capillarity coefficient $\kappa\equiv 0$, then System \eqref{1.1} reduces to the usual Navier-Stokes system of compressible fluids.
As we known, so far there are so many excellent literatures on the solutions to compressible Navier-Stokes fluids in different settings. In the present paper, allow us to pay more attention on the
Korteweg system \eqref{1.1} and investigate the dissipation effect of Korteweg
tensor. The existence of strong solutions for \eqref{1.1} was known since the works by Hattori and Li \cite{HL,HL1}. Global solutions are only obtained
for initial data \eqref{Eq:1.2} close enough to a stable equilibrium $(\rho^{\ast},0)$ with convex pressure profiles. In \cite{DD}, the well-posedness of
the Cauchy problem \eqref{1.1}-\eqref{Eq:1.2} is studied in so-called critical Besov spaces. That
is, in spaces which are invariant by the scaling of Korteweg system. Precisely, one can easily verify that if $(\rho,u)$ solves \eqref{1.1}, then does
$(\rho_{\lambda},u_{\lambda})$, where
\begin{equation}\label{scale}\rho_{\lambda}=\rho(\lambda^{2}t,\lambda x), u_{\lambda}=\lambda u(\lambda^{2}t,\lambda x),\end{equation}
provided that the pressure laws $P$ have been changed to $\lambda^{2}P$. Recall that such an approach is now classical, which has been employed in the incompressible
Navier-Stokes equations (see for example \cite{C,CP,CJY,FK}) and compressible
barotropic compressible Navier-Stokes equations (see \cite{D,CD,CMZ,DH,DX,H2,XX,X}). Danchin and Desjardins \cite{DD} first constructed global existence result for initial data close
enough to equilibrium $(\rho^{\ast},0)$ with the assumption
$P'(\rho^{\ast})>0$. Also, they gave a local-in-time existence result for initial densities bounded away from zero which does not require any stability assumption on the
pressure law. Charve, Danchin and the second author \cite{CDX} established the global well-posedness result in more general $L^p$ critical spaces. More importantly, they investigated the
Gevrey analyticity of \eqref{1.1}-\eqref{Eq:1.2}, which should be the first analyticity effort on the compressible fluids. Chikami and Kobayashi \cite{CK} investigate the
global well-posedness and optimal time-decay estimates in the
$L^2$-Besov spaces of critical regularity in higher dimensions ($d\geq3$). For the decay estimates in the general $L^p$ framework, the reader is referred to the recent work due to
Kawashima, Shibata and the second author \cite{KSX1}.
Murata and Shibata \cite{MS} addressed a totally different statement on the global existence and decay estimates to \eqref{1.1}-\eqref{Eq:1.2} in Besov spaces, where the
maximal $L^p$-$L^q$ regularity was mainly employed. Subsequently, Watanabe \cite{W} improved those results in \cite{MS} such that the
$L^2$ solution framework is admissible. The existence of global strong solution with large initial data for the Korteweg system remains an open problem in dimensions $d\geq2$. In that issue,
Haspot \cite{H3} gave a positive answer by considering shallow-water coefficients $\mu(\rho)=2\mu \rho$  and $\lambda(\rho)=0$ and quantum case
$\kappa(\rho)=\kappa/\rho$. Zhai and Li \cite{ZL} addressed well-posedness results without the smallness condition of the vertical component of the incompressible part. In  \cite{YYW}, Yu, Yang and Wu established the global-in-time existence for the pressure satisfying $P(\rho)=A\rho^\gamma$ allowing the initial data arbitrary large in $L^2$ space. More recently, Watanabe \cite{W2} proved the two dimensional system with sufficient large volume viscosity admited a global strong solution under large initial velocity.

On the other hand, owing to the fact that the Korteweg system was deduced by using Van der Waals potential, there exists  the non-monotone pressure due to the phase transition, see for example Kobayashi and Tsuda's work \cite{KT} for details.
Therefore, it is also important to investigate more physical cases of
$P'(\rho^{\ast})=0$ (zero sound speed) and $P'(\rho^{\ast})<0$ in the mathematical
analysis of
\eqref{1.1}-\eqref{Eq:1.2} where in those cases, the pressure term couldn't offer any dissipation. Danchin and Desjardins \cite{DD} focused on the critical case $P'(\rho^{\ast})=0$, and provided a Fourier study of the corresponding linearized system to observe different behaviors whether the quantity $\bar{\nu}^2-4\bar{\kappa}$ (see below) is positive, negative and zero. In all cases the parabolic smoothing in all frequency spaces are expected. In contrast to the compressible Navier-Stokes equations, the Korteweg system admits a
striking property that is still linearly stable even when the sound speed is zero. Furthermore, it was shown that the \eqref{1.1}-\eqref{Eq:1.2} was locally well-posed in more general scaling invariant Besov spaces of $L^p$ type. Kotschote \cite{K2} considered the initial
boundary value problem of \eqref{1.1} in bounded domain and proved the local existence and uniqueness of strong solutions without assuming increasing pressure laws. Tang and Gao \cite{TG} proved the stability of weak solution in the periodic domain $\mathbb{T}^d (d=2,3)$ under a non-monotone pressure law.
Chikami and Kobayashi \cite{CK} established global existence and decay of strong solution in the $L^2$ critical Besov spaces and their results were later generalized to $L^2$-$L^p$ framework for $d\geq4$ by Zhang in \cite{Zhang}. Moreover, the existence
of a global strong solution for the Navier-Stokes-Korteweg system under the condition
$P'(\rho^{\ast})=0$ is also investigated by Kobayashi and Tsuda \cite{KT} and Kobayashi and Murata \cite{KM} in the classical Sobolev spaces.
Huang, Hong and Shi \cite{HHS} studied more general pressure law including Van der Waals equation of state and showed the local-in-time existence of smooth solutions to the Cauchy problem of \eqref{1.1}-\eqref{Eq:1.2}. Furthermore, the global-in-time existence of smooth solutions for the periodic problem is also established.

Following from \cite{DD}, \textit{a question remains whether one can obtain the unique global solution in the critical Besov spaces of $L^p$ type}. Recently, it is observed that the linearized system of \eqref{1.1}-\eqref{Eq:1.2} admits a \textit{purely} parabolic structure, where acoustic waves are not available in particular as $\bar{\nu}^2\geq4\bar{\kappa}$ (see Section 2 for details). Consequently, the new observation allows to establish the $L^p$ version of global well-posedness in both high frequencies and low frequencies of density and velocity.

\subsection{Momentum formulation and main results}
Owing to the technical reason (taking care of conservation form in fact), we would like to rewrite \eqref{1.1} in terms of momentum.
Let the reference density $\rho^{*}>0$ such that $P'(\rho^{*})=0$. Denote by $a=(\rho-\rho^{*})/\rho^{*}$ the density fluctuation and by $m=\rho u/\rho^{*}$
the scaled momentum. Also, we introduce the scaled viscosity coefficients $\bar{\mu}=\frac{\mu(\rho^{*})}{\rho^{*}}$, $\bar{\lambda}=\frac{\lambda(\rho^{*})}{\rho^{*}}$
and the scaled capillarity coefficient $\bar{\kappa}=\kappa(\rho^{*})\rho^{*}$. A simple calculation leads us to the following perturbation problem:
\begin{equation}
\left\{
\begin{array}{l}\partial_{t}a+\mathrm{div}m=0,\\ [1mm]
 \partial_{t}m-\bar{\mathcal{A}}m-\bar{\kappa}\nabla\Delta a= g(a,m),\\[1mm]
(a,m)|_{t=0}=(a_{0},m_{0}),\\[1mm]
 \end{array} \right.\label{linearized}
\end{equation}
with
$$a_{0}\triangleq (\rho_{0}-\rho^{*})/\rho^{*}, \quad  \ m_{0}\triangleq \rho_{0} u_{0}/\rho^{*} \quad \mbox{and} \quad \bar{\mathcal{A}}m=\bar{\mu}\mathrm{div}D(m)+(\bar{\mu}+\bar{\lambda})\nabla\mathrm{div}m.$$
Setting $\bar{\nu}\triangleq2\bar{\mu}+\bar{\lambda}$ and $\check{\kappa}\triangleq\kappa(\rho^{*})+\rho^{*}\kappa'(\rho^{*})$. One can write the nonlinear terms $g=\sum^{6}_{i=1} g_{i}$ as follows:
\begin{equation}
\left\{
\begin{array}{l}g_{1}=\mathrm{div}\big((Q(a)-1)m\otimes m\big),\\ [1mm]
g_{2}=\bar{\mu}\Delta(Q(a)m)+(\bar{\mu}+\bar{\lambda})\nabla\mathrm{div}(Q(a)m),\\ [1mm]
g_{3}=2\mathrm{div}\Big(\tilde{\mu}(a)D\big((1-Q(a))m\big)\Big)+\nabla\Big(\tilde{\lambda}(a)\mathrm{div}\big((1-Q(a))m\big)\Big),\\ [1mm]
g_{4}=\nabla(aG(a)),\\ [1mm]
g_{5}=\nabla(\tilde{\kappa}_{1}(a)\Delta a),\\ [1mm]
g_{6}=\frac{\rho^{*}}{2}\nabla\Big(\big(\tilde{\kappa}_{2}(a)+\check{\kappa}\big)|\nabla a|^{2}\Big)-\mathrm{div}\Big(\big(\tilde{\kappa}_{3}(a)+\bar{\kappa}\big)\nabla
a\otimes\nabla a\Big),\\ [1mm]
 \end{array} \right.\label{nonlinear}
\end{equation}
with
\begin{equation}
\left\{
\begin{array}{l}Q(a)=\frac{a}{a+1},\\ [1mm]
G(a)=P'(\rho^{*}+\theta \rho^{*}a),\\ [1mm]
\tilde{\mu}(a)=\frac{2\mu(a\rho^{*}+\rho^{*})}{\rho^{*}}-2\bar{\mu},\\ [1mm]
\tilde{\lambda}(a)=\frac{\lambda(a\rho^{*}+\rho^{*})}{\rho^{*}}-\bar{\lambda},\\ [1mm]
\tilde{\kappa}_{1}(a)=(a\rho^{*}+\rho^{*})\kappa(a\rho^{*}+\rho^{*})-\bar{\kappa},\\ [1mm]
\tilde{\kappa}_{2}(a)=\kappa(a\rho^{*}+\rho^{*})+(a\rho^{*}+\rho^{*})\kappa'(a\rho^{*}+\rho^{*})-\check{\kappa},\\ [1mm]
\tilde{\kappa}_{3}(a)=\rho^{*}\kappa(a\rho^{*}+\rho^{*})-\bar{\kappa}.\\ [1mm]
 \end{array} \right.\label{composite1}
\end{equation}
In our analysis, those functions are to be vanish at zero and the exact value will not matter.
The main goal of our paper is to establish the global-in-time existence and Gevrey analyticity of strong solutions (\ref{linearized})-(\ref{composite}) in the case of
$P'(\rho^{\ast})=0$. For that end,
we would like to introduce the following hybrid Besov space.

\begin{defn}\label{defn1.2}
For $T>0$ and $1\leq p,q\leq\infty$, we denote by $E^{p,q}_{T}$  the space of functions such that \begin{equation}
\|(a,m)\|_{E^{p,q}_{T}}\triangleq
\|(\nabla a,m)\|_{\tilde{L}^{\infty}_{T}(\dot{B}^{\frac{d}{p}-1,\frac{d}{q}-3}_{(p,1),(q,\infty)})}+\|(\nabla
a,m)\|_{\tilde{L}^{1}_{T}(\dot{B}^{\frac{d}{p}+1,\frac{d}{q}-1}_{(p,1),(q,\infty)})},
\end{equation}
where $\dot{B}^{s,t}_{(p,1),(q,\infty)}$ are the hybrid Besov spaces (see Section 3). The time index $T$ can be omitted if
$T=\infty$.
\end{defn}
Some assumptions are labeled as follows. \begin{itemize}
\item[$(H_1)$:] $P'(\rho^{*})=0$;
\item[$(H_2)$:]$\lambda, \mu, \kappa$ and $P$ are real analytic\footnote{Those functions are assumed to be real analytic near zero in order to establish the evolution of Gevrey regularity. In fact, the usual smoothness is sufficient for the global existence of solutions.}; \\
\item[$(H_3)$:]  $(a_{0},m_{0})\in \dot B^{\frac d{p}}_{p,1}\times\dot
B^{\frac d{p}-1}_{p,1}$, besides, $(a_0^\ell,u_0^\ell)\in{{\dot
    B}^{\frac{d}{q}-2}}_{q,\infty}\times{{\dot B}^{\frac{d}{q}-3}}_{q,\infty}$ \mbox{such that}\begin{eqnarray*}
\|(\nabla a_{0}, m_{0})\|_{\dot{B}^{\frac{d}{p}-1,\frac{d}{q}-3}_{(p,1),(q,\infty)}}\ll 1.\end{eqnarray*}
\end{itemize}
Now, the main result is stated as follows.
\begin{thm}\label{thm2.1}Suppose that $\bar{\nu}^2\geq4\bar{\kappa}$. Let $1 \leq q\leq p\leq 2q$, $p<d$ and
\begin{eqnarray}\label{condtion1}\frac{1}{q}< \frac{1}{p}+\frac{2}{d}.\end{eqnarray}
If $(H_1)\sim (H_3)$ hold true, then the Cauchy problem (\ref{linearized})-(\ref{composite1}) admits a unique global-in-time solution $(a,m)$ in the space $E^{p,q}$ satisfying
\begin{equation}\label{bound}
\|(a,m)\|_{E^{p,q}_{T}}\lesssim \|(\nabla a_{0}, m_{0})\|_{\dot{B}^{\frac{d}{p}-1,\frac{d}{q}-3}_{(p,1)(q,\infty)}}
\end{equation}
for any $T>0$. Furthermore, the solution $(a,m)$ fulfills $e^{\sqrt {c_{0}t}\Lambda_1}(a,m)\in E^{p,q}$ for $q\neq1$,
where $c_{0}>0$ and $\Lambda_1$  stands for the Fourier multiplier with
symbol $|\xi|_{1}=\sum\limits_{i=1}^{d}|\xi_{i}|.$\footnote{For technical reasons, as observed before in \cite{L-cras}, it is more
convenient to use the  $\ell^1(\R^d)$ norm instead of the usual $\ell^2(\R^d)$ norm associated with $\Lambda=(-\Delta)^{1/2}$.}
\end{thm}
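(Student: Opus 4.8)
The plan is to set up a contraction–mapping scheme in the space $E^{p,q}$, and then rerun it in a Gevrey‑weighted analogue, resting on two ingredients: maximal regularity estimates of parabolic type for the linearized system of Section 2, and new bilinear and composition estimates for the nonlinearities $g_1,\dots,g_6$ in the hybrid Besov spaces $\dot B^{s,t}_{(p,1),(q,\infty)}$. The scaling of $E^{p,q}$ is precisely the one under which \eqref{1.1} is invariant, so everything is critical; the gain of two derivatives in the $\tilde L^1_T$-component of $E^{p,q}$ reflects the parabolic smoothing, which here — and this is the whole point of the hypothesis $\bar\nu^2\ge4\bar\kappa$ — occurs in \emph{all} frequency regimes, with no acoustic component.

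\emph{Linear analysis.} After a Helmholtz decomposition of $m$, the divergence-free part obeys a heat equation with viscosity $\bar\mu$, while the curl-free part couples with $a$ through a $2\times2$ system whose characteristic roots are $\lambda_\pm(\xi)=\frac{|\xi|^2}{2}\bigl(-\bar\nu\pm\sqrt{\bar\nu^2-4\bar\kappa}\bigr)$. Under $\bar\nu^2\ge4\bar\kappa$ both roots are real and $\le -c|\xi|^2$, with at most a factor $t|\xi|^2$ in the degenerate case $\bar\nu^2=4\bar\kappa$; this gives genuine parabolic decay $e^{-c2^{2j}t}$ on every dyadic band $\dot\Delta_j$, in both frequency regimes and for every Lebesgue exponent. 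Summing the resulting block estimates and their Duhamel counterparts with the pair $(p,1)$ on high frequencies and $(q,\infty)$ on low frequencies yields $\|(a,m)\|_{E^{p,q}_T}\lesssim\|(\nabla a_0,m_0)\|_{\dot B^{\frac dp-1,\frac dq-3}_{(p,1),(q,\infty)}}+\|g\|_{\tilde L^1_T(\dot B^{\frac dp-1,\frac dq-3}_{(p,1),(q,\infty)})}$.

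\emph{Nonlinear estimates and contraction.} Via Bony's paraproduct decomposition, each $g_i$ is estimated in $\tilde L^1_T(\dot B^{\frac dp-1,\frac dq-3}_{(p,1),(q,\infty)})$ by a quadratic or higher expression in $\|(a,m)\|_{E^{p,q}_T}$: on high frequencies this reduces to the fact that $\dot B^{d/p}_{p,1}$ is a critical algebra acting on itself, together with the two-derivative parabolic gain on $m$ and on $\nabla a$ that is needed to absorb the second-order terms $g_2,g_3$ and the capillary terms $g_5,g_6$, while the analytic composition factors $F(a)$ with $F(0)=0$ are controlled by the hybrid composition lemma. The genuinely new work is the low-frequency component with $q\ne p$: placing the high$\times$high$\to$low and low$\times$high interactions in $\dot B^{\frac dq-3}_{q,\infty}$ and summing the dyadic series forces exactly the constraints $1\le q\le p\le2q$, $p<d$ and $\frac1q<\frac1p+\frac2d$, which make the relevant Bernstein/embedding and paraproduct exponents compatible; I expect these hybrid low-frequency product estimates to be the main obstacle. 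Granted all this, a standard fixed-point argument on a small ball of $E^{p,q}$, using the smallness of $\|(\nabla a_0,m_0)\|_{\dot B^{\frac dp-1,\frac dq-3}_{(p,1),(q,\infty)}}$ from $(H_3)$, produces the unique global solution and the bound \eqref{bound}; uniqueness is read off the contraction (or from a companion estimate in a slightly larger space).

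\emph{Gevrey analyticity.} I would repeat the scheme in $\widetilde E^{p,q}_T:=\{(a,m):e^{\sqrt{c_0t}\Lambda_1}(a,m)\in E^{p,q}_T\}$ for a small $c_0>0$. Conjugating the parabolic semigroup by $e^{\sqrt{c_0t}\Lambda_1}$ costs, on $\dot\Delta_j$, a factor $e^{C\sqrt{c_0t}\,2^j-c2^{2j}t}\le e^{C^2c_0/(2c)}e^{-\frac c2 2^{2j}t}$ by Young's inequality, so the linear estimates survive with $c$ halved and a universal prefactor. For the nonlinearities one uses the classical device $e^{\sqrt{c_0t}\Lambda_1}(fg)=\mathcal B_t\bigl(e^{\sqrt{c_0t}\Lambda_1}f,\,e^{\sqrt{c_0t}\Lambda_1}g\bigr)$ with $\mathcal B_t$ the bilinear Fourier multiplier of symbol $e^{\sqrt{c_0t}(|\xi|_1-|\xi-\eta|_1-|\eta|_1)}\le1$; this symbol has an inverse Fourier transform bounded in $L^1$ uniformly in $t$, so $\mathcal B_t$ obeys the same paraproduct and composition bounds as ordinary multiplication, and the entire fixed-point argument carries over verbatim to $\widetilde E^{p,q}_T$ (the extra restriction $q\ne1$ entering at the low-frequency product/composition step). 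By uniqueness the solution thus obtained coincides with the one already constructed, which yields $e^{\sqrt{c_0t}\Lambda_1}(a,m)\in E^{p,q}$ as claimed.
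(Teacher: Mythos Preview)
Your outline is essentially the paper's strategy---parabolic maximal regularity plus hybrid $L^q$--$L^p$ product estimates at low frequencies, then a fixed point, then rerun with the Gevrey weight---and you have correctly located where the constraints $q\le p\le 2q$, $p<d$, $\tfrac1q<\tfrac1p+\tfrac2d$ enter.  Two points deserve comment.

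For the linear step the paper does \emph{not} work directly with the spectral representation of the $2\times2$ compressible block (which would force a separate treatment of the Jordan case $\bar\nu^2=4\bar\kappa$ and the $t|\xi|^2$ factor you mention).  Instead it introduces Hoff's effective velocity $w=\mathcal Qm+\alpha\nabla a$ with $\alpha$ chosen so that $\alpha(\bar\nu-\alpha)=\bar\kappa$; then $w$ satisfies a genuine heat equation $\partial_t w-(\bar\nu-\alpha)\Delta w=\mathcal Qg$, and $\mathcal Qm$ satisfies another heat equation with $\Delta w$ as forcing.  This decoupling reduces both the high- and low-frequency linear estimates to the standard heat maximal regularity (Proposition~\ref{prop2.2}) and is uniform across the degenerate case.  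Your spectral approach would also work but is less clean.

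Your justification of the Gevrey bilinear bound contains a genuine inaccuracy.  The symbol $e^{\sqrt{c_0t}(|\xi|_1-|\xi-\eta|_1-|\eta|_1)}$ is indeed $\le1$, but this does \emph{not} imply that its inverse Fourier transform lies in $L^1$, and no such $L^1$-kernel argument is available.  The correct mechanism (Lemma~\ref{lem5.3}, after Bae--Biswas--Tadmor and Lemari\'e-Rieusset) is to exploit the tensor structure of $|\cdot|_1$ and write $\mathcal B_t(f,g)=\sum_{\alpha,\beta,\gamma}K_\alpha(Z_{t,\alpha,\beta}f\cdot Z_{t,\alpha,\gamma}g)$ as a finite sum of products of one-dimensional frequency projections and Poisson-type multipliers, each bounded on $L^r$ only for $1<r<\infty$.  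This Calder\'on--Zygmund dependence is exactly why the Gevrey part needs $q>1$, as you note, but your stated reason would not explain that restriction.
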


\begin{rem}
The global-in-time strong solutions of \eqref{1.1}-\eqref{Eq:1.2} can be constructed in $E^{p,p}$ with $1\leq p<d$, under the assumptions of $P'(\rho^{\ast})=0$
and $\bar{\nu}^2\geq4\bar{\kappa}$. As we known, the $L^2$-type boundedness on the low frequencies of density and velocity seems to be unavoidable in the well-prepared data framework due to acoustic waves, see e.g. \cite{CD,CMZ,DH,H2} for compressible Navier-Stokes equations and \cite{CDX,CK,DD} for compressible Navier-Stokes-Korteweg equations. The special structure enables us to go beyond the limit, which is the novelty of this paper. To do this, general product estimates of $L^q$-$L^p(1\leq q\leq p)$ type have been developed, see Proposition \ref{cor1.01} and Corollary \ref{cor1.1}.
\end{rem}

\begin{rem}
The research of analyticity (in space or time variables) for incompressible Navier-Stokes equations has a long history, see \cite{BBT,FT1,FT2,M} and therein references. Several years ago, Charve, Danchin and the second author \cite{CDX} first investigated the Gevrey analyticity of solutions for compressible fluids model of Korteweg type. Theorem \ref{thm2.1}, in fact, gives the counterpart in the case of $P'(\rho^{\ast})=0$, which indicates that if initial data are sufficiently small in critical $L^p$ hybrid Besov spaces, then the solution to \eqref{1.1}-\eqref{Eq:1.2} is globally in the Gevrey class and the radius of analyticity grows like $\sqrt{t}$ in time.
\end{rem}

\subsection{Motivation and strategy}
In order to understand the proof of Theorems \ref{thm2.1} well, we make a formal spectral analysis to the linearized system of \eqref{linearized}.  Denote by $\mathcal{P}=\mathrm{Id}-\nabla(-\Delta)^{-1} \mathrm{div}$ the Leray projector. Then, the linear system can be written in terms of
the divergence-free part $\mathcal{P}m$ and the compressible one $\mathcal{Q}m=(I-\mathcal{P})m$:
\begin{equation}
\left\{
\begin{array}{l}\partial_{t}a+\mathrm{div} \mathcal{Q}m=0,\\ [1mm]
 \partial_{t}\mathcal{Q}m-\bar{\nu}\Delta \mathcal{Q}m-\bar{\kappa}\nabla\Delta a=0,\\[1mm]
\partial_{t}\mathcal{P}m-\bar{\mu}\Delta \mathcal{P}m =0.\\[1mm]
 \end{array} \right.\label{spectrum}
\end{equation}

It is clearly that the incompressible part $\mathcal{P}m$ just satisfies an ordinary heat equation. Regarding the compressible part $\mathcal{Q}m$, it is convenient to
introduce $\mathcal{V}\triangleq \Lambda^{-1}\div m$.  Consequently, the new variable $(a, \mathcal{V})$ satisfies the coupling $2\times 2$ system:
\begin{equation}\label{R-E69}
\left\{\begin{array}{l}\d_ta+\Lambda \mathcal{V}=0,\\[1ex]
\d_t\mathcal{V}-\bar{\nu}\Delta \mathcal{V}-\kappa\Lambda^{3} a=0%,\ \ \ \ h\triangleq\Lambda^{-1}\div g.
\end{array}\right.
\end{equation}
Taking the Fourier transform with respect to $x\in \mathbb{R}^{d}$ leads to
\begin{equation}
\frac{d}{dt}\left(
              \begin{array}{c}
                \hat{a} \\
                \hat{\mathcal{V}} \\
              \end{array}
            \right)
=A(\xi)\left(
              \begin{array}{c}
                \hat{a} \\
                \hat{\mathcal{V}} \\
              \end{array}
            \right)
            %+\left(
             % \begin{array}{c}
              %  0 \\
              %  \hat{h} \\
              %\end{array}
           % \right) \, \, \, \,
           \quad \mbox{with}\quad A(\xi)=\left(
                                                          \begin{array}{cc}
                                                            0 & -|\xi| \\
                                                            \kappa|\xi|^3 & -\bar{\nu}|\xi|^2 \\
                                                          \end{array}
                                                        \right),
\end{equation}
where $\xi\in \mathbb{R}^{d}$ is the Fourier variable. It is not difficult to check that
\begin{itemize}
\item[$(i)$:] If $\bar{\nu}^2\geq4\bar{\kappa}$, then $A(\xi)$ has two real eigenvalues:
$$\lambda_{\pm}=\frac{-\bar{\nu}\pm\sqrt{(\bar{\nu}^{2}-4\bar{\kappa})}}{2}|\xi|^{2};$$
\item[$(ii)$:] If $\bar{\nu}^2<4\bar{\kappa}$, then $A(\xi)$ has two complex conjugated eigenvalues:
$$\lambda_{\pm}=\frac{-\bar{\nu}\pm \mathrm{i}\sqrt{4\bar{\kappa}-\bar{\nu}^{2}}}{2}|\xi|^{2},$$
where $\mathrm{i}=\sqrt{-1}$ is the unit imaginary number.
\end{itemize}

Let us underline that \eqref{R-E69} is of the ``\textit{regularity-gain type}", according to the dissipation definition for general hyperbolic-parabolic system with dispersion formulated
in \cite{KSX2}, which indicates that the solution admits parabolic regularization. In particular, the system
is \textit{ purely} parabolic in the case $(i)$, which is our main motivation. The study for the case $(ii)$ will be shown in another forthcoming paper. Haspot \cite{H3}
constructed the existence of global strong solution to isothermal quantum Navier-Stokes equations with \textit{large} initial data, and that result
is strongly related to a specific choice in $(i)$. A natural question follows. Is it possible to produce a large solution for general viscosity
and capillarity coefficients? On the other hand, the large-time asymptotic description of solutions in case of $1\leq p<2$ is also very interesting (see \cite{HZ}). These questions are left to the future consideration.

In what follows, we sketch the strategy for the proof of Theorems \ref{thm2.1}. The uniform a priori estimates depend mainly on the energy approach in terms
of Hoff's \textit{viscous effective flux} (see \cite{HO}), which has been well developed by Haspot \cite{H2} in the critical regularity framework.
In the case $(i)$, the effective flux can be defined by
\begin{equation}\label{effective}
w\triangleq \cQ m+\alpha \nabla a,\end{equation}
where $\alpha=\frac{1}{2}(\bar{\nu}\pm\sqrt{\bar{\nu}^2-4\bar{\kappa}})$. Owing to the loss of dissipation arising from pressure, the low-frequency regularity is totally
different in contrast to the case of $P'(\rho^{\ast})>0$. To control the evolution of norm $\|a\|_{L^{1}_{T}(\dot{B}^{d/p}_{p,1})}$ at low frequencies, inspired by \cite{CK}, a lower regularity than scaling is employed, which brings us to use ``hybrid" Besov norms with different regularity indices in the low and high frequencies (chain of spaces ${{\dot
B}^{\frac{d}{q}-2}}_{q,\infty}\times{{\dot B}^{\frac{d}{q}-3}}_{q,\infty}(q\leq p)$ for low frequencies and
$\dot B^{\frac d{p}}_{p,1}\times\dot B^{\frac d{p}-1}_{p,1}$ for high frequencies) which allows us to construct solutions with a more rough initial data (compared to ${{\dot B}^{\frac{d}{q}-3}}_{q,1}$-type space in \cite{CK}). Consequently, the more general $L^q-L^p$ type prduct estimates would be established. The global-in-time existence is proved by the standard Banach fixed point theorem.

So far there are few analyticity results for compressible fluids, except for \cite{CDX}. One of major problems is how to bound the evolution of Gevrey regularity on those nonlinearity terms (for instance, product and composite). In the case of $P'(\rho^{\ast})=0$, one has to handle new nonlinear terms like $\nabla Q(a)m, DQ(a)m$ in $g_2$ and $g_3$. Consequently, a composite estimate with respect to real analytic functions is developed, where the regularity index can be relaxed as $s>d/p$ in comparison with Proposition \ref{prop5.2} (see Proposition \ref{prop4.1}).

Finally, the rest of this paper unfolds as follows: in Section 3, we briefly recall the Littlewood-Paley decomposition and Besov space theory.
Section 4 is devoted to the proof of Theorem \ref{thm2.1}. In the last section (``Appendix"), we develop those nonlinear
estimates for paraproduct, remainder and composition.

\section{Littlewood-Paley theory and Besov spaces}\label{sec:2} \setcounter{equation}{0}
Throughout the paper, $C>0$ stands for a generic ``constant". For brevity, $f\lesssim g$ means that $f\leq Cg$. It will also be understood that $\|(f,g)\|_{X}=\|f\|_{X}+\|g\|_{X}$ for all $f,g\in X$. The Fourier transform of a function $f\in\mathcal{S}$ (the Schwarz class) is denoted by
$$\widehat{f}(\xi)=\mathcal{F}[f](\xi):=\int_{\mathbb{R}^{d}}f(x)e^{-i\xi\cdot x}dx.$$
For $ 1\leq p\leq \infty$, we denote by $L^{p}=L^{p}(\mathbb{R}^{d})$ the usual Lebesgue space on $\mathbb{R}^{d}$ with the norm $\|\cdot\|_{L^{p}}$.

For convenience of reader, we would like to recall the Littlewood-Paley decomposition, Besov spaces and related analysis tools. The reader is referred to Chap. 2 and Chap. 3 of \cite{BCD} for more details. Let $\chi$ be a smooth function valued in $[0,1]$, such that $\chi$ is supported in the ball
$\mathbf{B}(0,\frac{4}{3})=\{\xi\in\mathbb{R}^{d}:|\xi|\leq\frac{4}{3}\}$. Set $\varphi(\xi)=\chi(\xi/2)-\chi(\xi)$. Then $\varphi$
is supported in the shell $\mathbf{C}(0,\frac{3}{4},\frac{8}{3})=\{\xi\in\mathbb{R}^{d}:\frac{3}{4}\leq|\xi|\leq\frac{8}{3}\}$ so that
$$\sum_{q\in\mathbb{Z}}\varphi(2^{-q}\xi)=1, \quad \forall\xi\in\mathbb{R}^{d}\backslash\{{0}\}.$$

For any tempered distribution $f\in\mathcal{S}'$, one can define the homogeneous dyadic blocks and homogeneous low-frequency cut-
off operators:
\begin{eqnarray*}
&&\dot{\Delta}_{q}f:=\varphi(2^{-q}D)f=\mathcal{F}^{-1}(\varphi(2^{-q}\xi)\mathcal{F}f), \quad q\in\mathbb{Z};
\end{eqnarray*}
\begin{eqnarray*}
&&\dot{S}_{q}f:=\chi(2^{-q}D)f=\mathcal{F}^{-1}(\chi(2^{-q}\xi)\mathcal{F}f), \quad q\in\mathbb{Z}.
\end{eqnarray*}
Furthermore, we have the formal homogeneous decomposition as follows
$$f=\sum_{q\in\mathbb{Z}}\dot{\Delta}_{q}f.$$
Also, throughout the paper, $f^{h}$ and $f^{\ell}$ represent the high frequency part and low frequency part of $f$ respectively where
$$f^{\ell}\triangleq\dot{S}_{q_{0}}f;\quad f^{h}\triangleq(1-\dot{S}_{q_{0}})f$$
with some given constant $q_{0}$.

Denote by $\mathcal{S}'_{0}:=\mathcal{S'}/\mathcal{P}$ the tempered distributions modulo polynomials $\mathcal{P}$. As we known, the homogeneous
Besov spaces can be characterised in terms of the above spectral cut-off blocks.

\subsection{{\bf Homogeneous Besov space}}
\hspace*{\fill}
\begin{defn}\label{defn2.1}
For $s\in \mathbb{R}$ and $1\leq p,r\leq \infty$, the homogeneous Besov spaces $\dot{B}^s_{p,r}$ are defined by
$$\dot{B}^s_{p,r}:=\Big\{f\in \mathcal{S}'_{0}:\|f\|_{\dot{B}^s_{p,r}}<\infty  \Big\} ,$$
where
\begin{equation*}
\|f\|_{\dot{B}^s_{p,r}}:=\Big(\sum_{q\in\mathbb{Z}}(2^{qs}\|\dot{\Delta}_qf\|_{L^{p}})^{r}\Big)^{1/r}
\end{equation*}
with the usual convention if $r=\infty$.
\end{defn}

We often use the following classical properties of Besov spaces (see \cite{BCD}):

$\bullet$ \ \emph{Scaling invariance:} For any $\sigma\in \mathbb{R}$ and $(p,r)\in
[1,\infty ]^{2}$, there exists a constant $C=C(\sigma,p,r,d)$ such that for all $\lambda >0$ and $f\in \dot{B}_{p,r}^{\sigma}$, we have
$$
C^{-1}\lambda ^{\sigma-\frac {d}{p}}\|f\|_{\dot{B}_{p,r}^{\sigma}}
\leq \|f(\lambda \,\cdot)\|_{\dot{B}_{p,r}^{\sigma}}\leq C\lambda ^{\sigma-\frac {d}{p}}\|f\|_{\dot{B}_{p,r}^{\sigma}}.
$$

$\bullet$ \ \emph{Completeness:} $\dot{B}^{\sigma}_{p,r}$ is a Banach space whenever $
\sigma<\frac{d}{p}$ or $\sigma\leq \frac{d}{p}$ and $r=1$.

$\bullet$ \ \emph{Interpolation:} The following inequality is satisfied for $1\leq p,r_{1},r_{2}, r\leq \infty, \sigma_{1}\neq \sigma_{2}$ and $\theta \in (0,1)$:
$$\|f\|_{\dot{B}_{p,r}^{\theta \sigma_{1}+(1-\theta )\sigma_{2}}}\lesssim \|f\| _{\dot{B}_{p,r_{1}}^{\sigma_{1}}}^{\theta} \|f\|_{\dot{B}_{p,r_2}^{\sigma_{2}}}^{1-\theta }$$
with $\frac{1}{r}=\frac{\theta}{r_{1}}+\frac{1-\theta}{r_{2}}$.

$\bullet$ \ \emph{Action of Fourier multipliers:} If $F$ is a smooth homogeneous of
degree $m$ function on $\mathbb{R}^{d}\backslash \{0\}$ then
$$F(D):\dot{B}_{p,r}^{\sigma}\rightarrow \dot{B}_{p,r}^{\sigma-m}.$$

%The following Lemma concerns the third index would be use throughout the paper
%\begin{lem} \label{unique}
%Let $s\in R$. Then for any $p\in[1,\infty]$, it holds that
%$$\|f\|_{\dot{B}_{p,1}^{s}}\leq C\frac{\|f\|_{\dot{B}_{p,\infty}^{s}}}{\epsilon}\mathrm{log}\big(e+\frac{\|f\|_{\dot{B}_{p,\infty}^{s+\epsilon}}+\|f\|_{\dot{B}_{p,\infty}^{s-\epsilon}}}
%{\|f\|_{\dot{B}_{p,\infty}^{s}}}\big)$$
%where $C$ is some constant with $0<\epsilon\leq1$.
%\end{lem}

The embedding properties will be used several times throughout the paper.
\begin{prop} \label{Prop2.1}
\begin{itemize}
\item For any $p\in[1,\infty]$ we have the continuous embedding
$\dot {B}^{0}_{p,1}\hookrightarrow L^{p}\hookrightarrow \dot{B}^{0}_{p,\infty}$.
\item If $\sigma\in \mathbb{R}$, $1\leq p_{1}\leq p_{2}\leq\infty$ and $1\leq r_{1}\leq r_{2}\leq\infty,$
then $\dot {B}^{\sigma}_{p_1,r_1}\hookrightarrow
\dot {B}^{\sigma-d\,(\frac{1}{p_{1}}-\frac{1}{p_{2}})}_{p_{2},r_{2}}$.
\item The space $\dot {B}^{\frac {d}{p}}_{p,1}$ is continuously embedded in the set of
bounded  continuous functions (going to zero at infinity if, additionally, $p<\infty$).
\end{itemize}
\end{prop}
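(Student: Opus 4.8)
The plan is to obtain all three items as routine consequences of the Littlewood--Paley characterization of $\dot B^\sigma_{p,r}$ together with two elementary tools. The first is Bernstein's inequality: if $\Supp\widehat{u}\subset\{|\xi|\le R\}$ then $\|u\|_{L^b}\lesssim R^{d(\frac1a-\frac1b)}\|u\|_{L^a}$ for $1\le a\le b\le\infty$, while if $\Supp\widehat{u}\subset\{r\le|\xi|\le R\}$ then $\|D^k u\|_{L^a}\approx R^k\|u\|_{L^a}$; applied to $\dot\Delta_q u$, whose spectrum lies in the annulus $2^q\mathbf{C}(0,\tfrac34,\tfrac83)$, this yields $\|\dot\Delta_q u\|_{L^b}\lesssim 2^{qd(\frac1a-\frac1b)}\|\dot\Delta_q u\|_{L^a}$ and $\|\nabla\dot\Delta_q u\|_{L^a}\approx 2^q\|\dot\Delta_q u\|_{L^a}$. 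The second is that $\dot\Delta_q f=2^{qd}h(2^q\cdot)\ast f$ with $h=\mathcal F^{-1}\varphi\in\mathcal S$, whence $\|\dot\Delta_q f\|_{L^p}\le\|h\|_{L^1}\|f\|_{L^p}$ uniformly in $q$ by Young's convolution inequality, and similarly $\|\dot S_{q_0}f\|_{L^p}\lesssim\|f\|_{L^p}$.

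For the first item, the embedding $L^p\hookrightarrow\dot B^0_{p,\infty}$ is precisely the Young bound $\sup_q\|\dot\Delta_q f\|_{L^p}\lesssim\|f\|_{L^p}$. For $\dot B^0_{p,1}\hookrightarrow L^p$, given $f\in\dot B^0_{p,1}$ one first checks that $\sum_q\dot\Delta_q f$ converges in $\mathcal S'$ to a genuine function (this uses the completeness range recalled in the excerpt: $0<d/p$ when $p<\infty$, or $s=0=d/p$ with $r=1$ when $p=\infty$), and then $\|f\|_{L^p}\le\sum_{q\in\Z}\|\dot\Delta_q f\|_{L^p}=\|f\|_{\dot B^0_{p,1}}$ by the triangle inequality. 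For the second item, set $\sigma'=\sigma-d(\frac1{p_1}-\frac1{p_2})$; Bernstein gives $2^{q\sigma'}\|\dot\Delta_q f\|_{L^{p_2}}\lesssim 2^{q\sigma}\|\dot\Delta_q f\|_{L^{p_1}}$ termwise, and taking the $\ell^{r_2}(\Z)$ norm together with the elementary inclusion $\ell^{r_1}\hookrightarrow\ell^{r_2}$ (valid since $r_1\le r_2$) produces $\|f\|_{\dot B^{\sigma'}_{p_2,r_2}}\lesssim\|f\|_{\dot B^{\sigma}_{p_1,r_1}}$.

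For the third item, Bernstein with $a=p$, $b=\infty$ gives $\|\dot\Delta_q f\|_{L^\infty}\lesssim 2^{qd/p}\|\dot\Delta_q f\|_{L^p}$, so $\sum_q\|\dot\Delta_q f\|_{L^\infty}\lesssim\|f\|_{\dot B^{d/p}_{p,1}}<\infty$; hence $\sum_q\dot\Delta_q f$ converges normally in $L^\infty$, and since each $\dot\Delta_q f$ is smooth and bounded, the sum is a bounded continuous function to which $f$ is identified. If in addition $p<\infty$, each $\dot\Delta_q f$ lies in $L^p$ and is spectrally localized; by the derivative form of Bernstein it is Lipschitz (hence uniformly continuous), and a uniformly continuous $L^p$ function with $p<\infty$ must vanish at infinity (otherwise $|f|$ stays bounded below on a disjoint family of balls, contradicting integrability). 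A uniformly convergent series of functions vanishing at infinity still vanishes at infinity, which gives the last assertion.

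I expect no serious obstacle: every step is a standard manipulation available verbatim in Chapters~2--3 of \cite{BCD}. The only point requiring care is the one where the definition $\dot B^\sigma_{p,r}\subset\mathcal S'_0=\mathcal S'/\mathcal P$ is used — one must verify for the relevant indices that the low-frequency part $\sum_{q<q_0}\dot\Delta_q f$ converges in $\mathcal S'$ and not merely modulo polynomials, so that $f$ possesses a canonical representative on which the $L^p$ and $L^\infty$ estimates make literal sense; this is exactly the role played by the completeness hypotheses $\sigma<d/p$, or $\sigma=d/p$ with $r=1$.
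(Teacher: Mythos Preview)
Your argument is correct and is precisely the standard route (Bernstein plus Young's convolution inequality plus $\ell^{r_1}\hookrightarrow\ell^{r_2}$) that one finds in Chapters~2--3 of \cite{BCD}. The paper itself does not supply a proof of this proposition: it is stated as a recall of classical embedding properties, with an implicit reference to \cite{BCD}, so there is nothing to compare against beyond noting that your sketch matches the textbook derivation.
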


Also, the following proposition will be used in this paper.
\begin{prop}\label{prop2.1}
Let $\sigma\in \mathbb{R}$ and $1\leq p, r\leq \infty$. Let $(f_{j})_{j\in \mathbb{Z}} $ be a sequence of
$L^p$ functions such that $\sum_{j\in\Z} f_j$ converges to some distribution $f$ in $\cS'_0$ and
$$\Bigl\|2^{j\sigma}\|f_j\|_{L^p(\R^d)}\Bigr\|_{\ell^r(\Z)}<\infty.
$$
If $\mathrm{Supp} \hat{f}_{j}\subset\mathcal{C}(0,2^jR_{1},2^jR_{2})$ for some $0<R_{1}<R_{2},$ then $f$ belongs to $\dot{B}^{\sigma}_{p,r}$ and there exists a constant $C$ such that
 \begin{equation*}
\|f\|_{\dot{B}^{\sigma}_{p,r}}\leq C\Bigl\|2^{j\sigma}\|f_j\|_{L^p(\R^d)}\Bigr\|_{\ell^r(\Z)}\cdotp
 \end{equation*}
 \end{prop}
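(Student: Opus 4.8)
The plan is to use the spectral localization of each $f_j$ to collapse $\dot{\Delta}_q f$ into a \emph{finite} sum $\sum_{|j-q|\le N_0}\dot{\Delta}_q f_j$, then estimate each term by Young's inequality and reassemble with the triangle inequality in $\ell^r(\Z)$.

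First I would record the quasi-orthogonality of the relevant annuli. Since $\varphi$ is supported in $\{\tfrac34\le|\xi|\le\tfrac83\}$, the multiplier $\varphi(2^{-q}\cdot)$ is supported in $\{\tfrac34 2^q\le|\xi|\le\tfrac83 2^q\}$, while $\hat f_j$ is supported in $\mathcal{C}(0,2^jR_1,2^jR_2)=\{2^jR_1\le|\xi|\le 2^jR_2\}$. A nonempty intersection forces $\tfrac34 2^q\le 2^jR_2$ and $2^jR_1\le\tfrac83 2^q$, that is $\tfrac{3R_1}{8}\le 2^{\,q-j}\le\tfrac{4R_2}{3}$, which holds only for $|j-q|\le N_0$ with a fixed integer $N_0=N_0(R_1,R_2)$. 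Consequently $\dot{\Delta}_q f_j=\varphi(2^{-q}D)f_j=0$ whenever $|j-q|>N_0$.

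Next, because $\sum_{j\in\Z}f_j$ converges to $f$ in $\mathcal{S}'_0$ and $\dot{\Delta}_q=\varphi(2^{-q}D)$ is continuous on $\mathcal{S}'_0$, I can commute $\dot{\Delta}_q$ with the series, and by the previous step only finitely many terms remain:
$$\dot{\Delta}_q f=\sum_{|j-q|\le N_0}\dot{\Delta}_q f_j.$$
The convolution kernel of $\dot{\Delta}_q$ is $x\mapsto 2^{qd}h(2^q x)$ with $h=\mathcal{F}^{-1}\varphi\in\mathcal{S}$, so $\|2^{qd}h(2^q\cdot)\|_{L^1}=\|h\|_{L^1}$ is independent of $q$; hence Young's inequality gives $\|\dot{\Delta}_q f_j\|_{L^p}\le\|h\|_{L^1}\|f_j\|_{L^p}$ uniformly in $j,q$. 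Multiplying by $2^{q\sigma}$ and substituting $k=q-j$,
$$2^{q\sigma}\|\dot{\Delta}_q f\|_{L^p}\le C\sum_{|j-q|\le N_0}2^{q\sigma}\|f_j\|_{L^p}=C\sum_{|k|\le N_0}2^{k\sigma}\,\bigl(2^{j\sigma}\|f_j\|_{L^p}\bigr)\big|_{j=q-k}.$$
Finally, taking the $\ell^r(\Z)$ norm in $q$ and using that $q\mapsto q-k$ is a bijection of $\Z$ for each $k$, the triangle inequality in $\ell^r$ yields
$$\Bigl\|2^{q\sigma}\|\dot{\Delta}_q f\|_{L^p}\Bigr\|_{\ell^r(\Z)}\le C\Bigl(\sum_{|k|\le N_0}2^{k\sigma}\Bigr)\Bigl\|2^{j\sigma}\|f_j\|_{L^p}\Bigr\|_{\ell^r(\Z)},$$
with the $k$-sum a finite constant depending only on $\sigma,R_1,R_2$. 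This bounds $\|f\|_{\dot{B}^{\sigma}_{p,r}}$; since $f\in\mathcal{S}'_0$ is part of the hypothesis, it follows that $f\in\dot{B}^{\sigma}_{p,r}$ with the claimed estimate.

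I do not expect a serious obstacle here; the only points needing care are (i) the justification that $\dot{\Delta}_q$ commutes with the merely conditionally convergent series $\sum_j f_j$, which is precisely where convergence in $\mathcal{S}'_0$ is invoked, and (ii) the scale-invariance of the $L^1$ norm of the kernel of $\dot{\Delta}_q$, which makes the constant in Young's inequality independent of the frequency $q$.
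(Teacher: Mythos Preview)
Your proof is correct and follows the standard argument (this is essentially Lemma~2.23 in \cite{BCD}, to which the paper defers rather than giving its own proof). The two key ingredients you identify---quasi-orthogonality of the annuli reducing $\dot{\Delta}_q f$ to a finite sum over $|j-q|\le N_0$, and the scale-invariant $L^1$ bound on the kernel of $\dot{\Delta}_q$---are exactly what the textbook proof uses.
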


Moreover, a class of mixed space-time Besov spaces are also used when studying the evolution PDEs, which were firstly
proposed by J.-Y. Chemin and N. Lerner in \cite{CL}.
\begin{defn}\label{defn2.2}
For $T>0,s\in \mathbb{R}, 1\leq r,\theta \leq \infty$, the homogeneous Chemin-Lerner spaces $\tilde{L}^\theta_{T}(\dot{B}^s_{p,r})$ are defined by
$$\tilde{L}^\theta_{T}(\dot{B}^s_{p,r}):=\Big\{f\in L^\theta(0,T;\mathcal{S}'_{0}) :\|f\|_{\tilde{L}^\theta_{T}(\dot{B}^s_{p,r})}<\infty  \Big\}, $$
where
$$\|f\|_{\tilde{L}^\theta_{T}(\dot{B}^s_{p,r})}:=\Big(\sum_{q\in \mathbb{Z}}(2^{qs}\|\dot{\Delta}_qf\|_{L^\theta_{T}(L^{p})})^{r}\Big)^{1/r} $$
with the usual convention if  $r=\infty$.
\end{defn}

The Chemin-Lerner space $\widetilde{L}^{\theta}_{T}(\dot{B}^{s}_{p,r})$ may be linked with the standard spaces $L_{T}^{\theta}(\dot{B}^{s}_{p,r})$ by means of Minkowski's inequality.
\begin{rem}\label{Rem2.1}
It holds that
$$\left\|f\right\|_{\widetilde{L}^{\theta}_{T}(\dot{B}^{s}_{p,r})}\leq\left\|f\right\|_{L^{\theta}_{T}(\dot{B}^{s}_{p,r})}\,\,\,
\mbox{if} \,\, \, r\geq\theta;\ \ \ \
\left\|f\right\|_{\widetilde{L}^{\theta}_{T}(\dot{B}^{s}_{p,r})}\geq\left\|f\right\|_{L^{\theta}_{T}(\dot{B}^{s}_{p,r})}\,\,\,
\mbox{if}\,\,\, r\leq\theta.
$$
\end{rem}

\subsection{{\bf Product estimates and composition estimates}}
\hspace*{\fill}

The product estimates in Besov spaces play a fundamental role in bounding bilinear terms in \eqref{linearized} (see \cite{BCD}). Let us first introduce the well-known Bony decomposition where for any distribution $f$ and $g$, there holds
$$fg=T_{f}g+R(f,g)+T_{g}f$$
where paraproducts and remainders are defined as follow
$$T_{f}g\triangleq\sum_{j'\in \mathbb{Z}}\dot{S}_{j'-1}f\dot{\Delta}_{j'}g,\qquad R(f,g)\triangleq\sum_{j'\in \mathbb{Z}}\tilde{\dot{\Delta}}_{j'}f\dot{\Delta}_{j'}g.$$
where $\tilde{\dot{\Delta}}_{j'}=\sum\limits_{|j'-k|\leq1}{\dot{\Delta}}_{k}$.

Based on the Bony decomposition, we immediately have the following Proposition concerns bounding bilinear terms in \eqref{linearized} (see \cite{BCD})
\begin{prop}\label{prop3.2}
Let $s>0$ and $1\leq p,\,r\leq\infty$. Then $\dot{B}^{s}_{p,r}\cap L^{\infty}$ is an algebra and
$$
\|fg\|_{\dot{B}^{s}_{p,r}}\lesssim \|f\|_{L^{\infty}}\|g\|_{\dot{B}^{s}_{p,r}}+\|g\|_{L^{\infty}}\|f\|_{\dot{B}^{s}_{p,r}}.
$$
If $s_{1},s_{2}\leq\frac{d}{p}$, $s_{1}+s_{2}>d\max\{0,\frac{2}{p}-1\}$, then
$$\|ab\|_{\dot{B}^{s_{1}+s_{2}-\frac{d}{p}}_{p,1}}\lesssim\|a\|_{\dot{B}^{s_{1}}_{p,1}}\|b\|_{\dot{B}^{s_{2}}_{p,1}}.$$
If $s_{1}\leq\frac{d}{p}$, $s_{2}<\frac{d}{p}$, $s_{1}+s_{2}\geq d\max\{0,\frac{2}{p}-1\}$, then
$$\|ab\|_{\dot{B}^{s_{1}+s_{2}-\frac{d}{p}}_{p,\infty}}\lesssim\|a\|_{\dot{B}^{s_{1}}_{p,1}}\|b\|_{\dot{B}^{s_{2}}_{p,\infty}}.$$
\end{prop}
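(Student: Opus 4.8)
\textbf{Proof strategy for Proposition \ref{prop3.2}.}

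The plan is to reduce each of the three assertions to the behaviour of the three pieces of the Bony decomposition $fg=T_fg+R(f,g)+T_gf$, and then to bound paraproducts and remainders by their classical continuity properties, which I now recall in the forms I will use. For the paraproduct, one has the two estimates
\begin{equation*}
\|T_fg\|_{\dot B^{s}_{p,r}}\lesssim \|f\|_{L^{\infty}}\|g\|_{\dot B^{s}_{p,r}},
\qquad
\|T_fg\|_{\dot B^{s_1+s_2}_{p,r}}\lesssim \|f\|_{\dot B^{s_1}_{p,\infty}}\|g\|_{\dot B^{s_2}_{p,r}}\ \ (s_1<0),
\end{equation*}
the first valid for every $s\in\R$ and the second whenever $s_1<0$; both follow from the quasi-orthogonality of the supports $\mathrm{Supp}\,\widehat{\dot S_{j'-1}f\,\dot\Delta_{j'}g}\subset\mathcal C(0,c2^{j'},C2^{j'})$, Bernstein's inequality applied to the low-frequency factor, and Proposition \ref{prop2.1}. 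For the remainder, since $\mathrm{Supp}\,\widehat{\tilde{\dot\Delta}_{j'}f\,\dot\Delta_{j'}g}\subset \mathbf B(0,C2^{j'})$ one only gets a ball condition, so one must invoke the variant of Proposition \ref{prop2.1} for ball-supported dyadic pieces, which requires $\sigma>0$ (or $\sigma=0$, $r=1$); this yields
\begin{equation*}
\|R(f,g)\|_{\dot B^{s_1+s_2}_{p,r}}\lesssim \|f\|_{\dot B^{s_1}_{p,r_1}}\|g\|_{\dot B^{s_2}_{p,r_2}}
\qquad\text{when } s_1+s_2>0,\ \tfrac1r\le\tfrac1{r_1}+\tfrac1{r_2},
\end{equation*}
together with its $L^\infty$-version $\|R(f,g)\|_{\dot B^s_{p,r}}\lesssim\|f\|_{L^\infty}\|g\|_{\dot B^s_{p,r}}$ when $s>0$. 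Here the inner H\"older step must respect $\tfrac1p=\tfrac1{p_1}+\tfrac1{p_2}$; in our statements all factors sit in $L^p$, so one first embeds $\dot B^{s_i}_{p,1}\hookrightarrow\dot B^{s_i-d/p+d/p_i}_{p_i,1}$ (Proposition \ref{Prop2.1}) with an appropriate splitting $\tfrac1{p_1}+\tfrac1{p_2}=\tfrac1p$ so that the two new regularity indices are $\le$ the corresponding $d/p_i$, and this is precisely where the hypotheses $s_i\le d/p$ and $s_1+s_2>d\max\{0,\tfrac2p-1\}$ enter: the constraint $s_1-d/p+d/p_1>0$ (needed for the remainder) plus the symmetric one forces $s_1+s_2>2d/p-d/p_1-d/p_2=2d/p-d/p=\dots$; more carefully, one distinguishes $p\le2$ (take $p_1=p_2=2p$, remainder index $s_1+s_2-d/p>0$ is exactly the hypothesis with $\max\{0,\tfrac2p-1\}=0$) from $p>2$ (the constraint becomes $s_1+s_2>d(\tfrac2p-1)$). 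This case split is the technical heart of the product estimates and I expect it to be the main obstacle to writing out cleanly.

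\textbf{The algebra estimate (first assertion).} For $s>0$ and $f,g\in\dot B^s_{p,r}\cap L^\infty$ I estimate $T_fg$ and $T_gf$ by the first paraproduct bound (giving $\|f\|_{L^\infty}\|g\|_{\dot B^s_{p,r}}$ and $\|g\|_{L^\infty}\|f\|_{\dot B^s_{p,r}}$ respectively), and $R(f,g)$ by the $L^\infty$-version of the remainder bound, which is licit since $s>0$; summing the three contributions gives $\|fg\|_{\dot B^s_{p,r}}\lesssim\|f\|_{L^\infty}\|g\|_{\dot B^s_{p,r}}+\|g\|_{L^\infty}\|f\|_{\dot B^s_{p,r}}$, and the embedding $\dot B^{d/p}_{p,1}\hookrightarrow L^\infty$ (Proposition \ref{Prop2.1}) is the companion fact showing $\dot B^{d/p}_{p,1}$ itself is an algebra.

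\textbf{The two refined product estimates (second and third assertions).} Here the target regularity is $s_1+s_2-d/p$, which is typically negative, so one cannot use the $L^\infty$-type bounds. Instead, write $ab=T_ab+T_ba+R(a,b)$. For $T_ab$: if $s_1\le0$ use the second paraproduct bound with $\dot B^{s_1}_{p,1}\hookrightarrow\dot B^{s_1-d/p}_{\infty,1}$ (Bernstein) to get $\|T_ab\|_{\dot B^{s_1+s_2-d/p}_{p,r}}\lesssim\|a\|_{\dot B^{s_1}_{p,1}}\|b\|_{\dot B^{s_2}_{p,r}}$; if $s_1>0$ use instead $\dot B^{s_1}_{p,1}\hookrightarrow L^\infty$ only when $s_1=d/p$, and otherwise interpolate/embed to realize $T_ab$ as a paraproduct with the low-frequency factor in a negative-index $L^\infty$-based space after losing $d/p$ derivatives via Bernstein — concretely $\dot S_{j'-1}a$ is estimated in $L^\infty$ by $2^{j'd/p}\|a\|_{\dot B^{s_1}_{p,1}}$ summed appropriately, which is exactly the shift by $-d/p$. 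The symmetric term $T_ba$ is handled the same way with the roles exchanged, using $s_2\le d/p$ (resp. $s_2<d/p$). For $R(a,b)$: apply the remainder bound with $p_1,p_2$ chosen by the case split above so that $\tfrac1{p_1}+\tfrac1{p_2}=\tfrac1p$ and both shifted indices are nonnegative; the sum-condition $s_1+s_2-d/p>0$ (when $p\le2$) resp. $>d(\tfrac2p-2)+\dots$ guarantees the resulting regularity $s_1+s_2-d/p$ is $>0$ so Proposition \ref{prop2.1}'s ball version applies and produces the $\dot B^{s_1+s_2-d/p}_{p,\cdot}$ bound with the correct third index ($1$ in the second assertion, $\infty$ in the third). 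Collecting the three pieces yields the claimed inequalities; the only delicacy beyond bookkeeping is verifying, in the third assertion where $b\in\dot B^{s_2}_{p,\infty}$, that every step tolerates the weak index $\infty$ on $b$ — the paraproduct $T_ab$ and $T_ba$ preserve it trivially, and for $R(a,b)$ one takes $r_1=1$, $r_2=\infty$ in the remainder bound so that $\tfrac1r=1\le\tfrac1{r_1}+\tfrac1{r_2}$ forces $r=\infty$, matching the statement.
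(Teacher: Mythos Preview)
The paper does not supply a proof of this proposition; it is quoted as a standard fact with a reference to \cite{BCD}. Your plan---Bony decomposition followed by the classical paraproduct and remainder continuity results---is exactly the argument one finds there, so at the level of strategy you are aligned with the source the paper invokes.

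That said, two details in your sketch are garbled and would not compile into a correct proof as written. First, in the treatment of $T_ab$ you split according to ``$s_1\le0$'' versus ``$s_1>0$'', but the relevant dichotomy is $s_1<d/p$ versus $s_1=d/p$: after the Bernstein embedding $\dot B^{s_1}_{p,1}\hookrightarrow\dot B^{s_1-d/p}_{\infty,1}$ the negative-index paraproduct bound requires $s_1-d/p<0$, while the borderline $s_1=d/p$ is handled by $\dot B^{d/p}_{p,1}\hookrightarrow L^\infty$ and the first paraproduct estimate. Second, your remainder case split is reversed. For $p\ge2$ one has $\max\{0,2/p-1\}=0$: put the product in $L^{p/2}$ by H\"older and Bernstein back to $L^p$, and the summability condition becomes $s_1+s_2>0$. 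For $p\le2$ one has $\max\{0,2/p-1\}=2/p-1$: Bernstein each dyadic factor from $L^p$ to $L^2$, take the product in $L^1$, then Bernstein to $L^p$, which yields the condition $s_1+s_2>d(2/p-1)$. With these two corrections your outline goes through.
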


System  \eqref{linearized}  also involves compositions of functions that
are handled according to the following estimates.

\begin{prop}\label{prop2.25}
Let $F:\mathbb{R}\rightarrow \mathbb{R}$ be smooth with $F(0)=0$. For all $1\leq p,\,r\leq\infty$ and $s>0$ we have
$F(f)\in \dot {B}^{s}_{p,r}\cap L^{\infty}$ for $f\in \dot{B}^{s}_{p,r}\cap L^{\infty}$, and
$$\|F(f)\|_{\dot B^{s}_{p,r}}\leq C\|f\|_{\dot B^{s}_{p,r}}$$
with $C>0$ depending only on $\|f\|_{L^{\infty}}$, $F'$ (and higher derivatives), $s$, $p$ and $d$.

In the case $s>-d\min(\frac {1}{p},\frac {1}{p'})$ then $f\in\dot{B}^{s}_{p,r}\cap\dot {B}^{\frac {d}{p}}_{p,1}$
implies that $F(f)\in \dot{B}^{s}_{p,r}\cap\dot {B}^{\frac {d}{p}}_{p,1}$, and
$$\|F(f)\|_{\dot B^{s}_{p,r}}\leq C\|f\|_{\dot {B}^{s}_{p,r}},$$
where $C>0$ is some constant depends on $\|f\|_{\dot B^{\frac{d}{p}}_{p,1}}$, $F, s, p$ and $d$.
\end{prop}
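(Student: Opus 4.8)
For the first assertion I would use the classical telescoping (Meyer) decomposition. Since $F(0)=0$ and $\dot{S}_q f\to 0$ (resp. $\to f$) in $\mathcal{S}'$ as $q\to-\infty$ (resp. $q\to+\infty$), one has $F(f)=\sum_{q\in\mathbb{Z}}\big(F(\dot{S}_{q+1}f)-F(\dot{S}_q f)\big)$ in $\mathcal{S}'$, which I would verify quickly (with the usual care for homogeneous spaces). Because $\dot{S}_{q+1}f=\dot{S}_q f+\dot{\Delta}_q f$, the mean value theorem rewrites the $q$-th summand as $m_q\,\dot{\Delta}_q f$ with $m_q:=\int_0^1 F'(\dot{S}_q f+\tau\dot{\Delta}_q f)\,d\tau$. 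The crucial elementary bounds are $\|\dot{S}_q f\|_{L^\infty}+\|\dot{\Delta}_q f\|_{L^\infty}\lesssim\|f\|_{L^\infty}$ and, for $k\geq1$, $\|\nabla^k(\dot{S}_q f)\|_{L^\infty}+\|\nabla^k(\dot{\Delta}_q f)\|_{L^\infty}\lesssim 2^{qk}\|f\|_{L^\infty}$ (Bernstein-type inequalities for spectrally localized functions); combined with the Fa\`a di Bruno formula they yield $\|\nabla^k m_q\|_{L^\infty}\leq 2^{qk}A_k$, where $A_k$ depends only on $\|f\|_{L^\infty}$, finitely many derivatives of $F$ on a fixed interval, and $k$.

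I would then estimate $\|\dot{\Delta}_j(m_q\dot{\Delta}_q f)\|_{L^p}$ in two ways: the crude bound $\|m_q\|_{L^\infty}\|\dot{\Delta}_q f\|_{L^p}\lesssim A_0\|\dot{\Delta}_q f\|_{L^p}$, used when $j\leq q$; and, using that $\dot{\Delta}_j$ gains $N$ spatial derivatives ($\|\dot{\Delta}_j h\|_{L^p}\lesssim 2^{-jN}\|\nabla^N h\|_{L^p}$), the Leibniz rule together with the bounds on $\nabla^k m_q$ to produce $\|\dot{\Delta}_j(m_q\dot{\Delta}_q f)\|_{L^p}\lesssim 2^{(q-j)N}B_N\|\dot{\Delta}_q f\|_{L^p}$, used when $j>q$. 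Writing $u_q:=2^{qs}\|\dot{\Delta}_q f\|_{L^p}\in\ell^r(\mathbb{Z})$, summing over $q$ and multiplying by $2^{js}$ gives $2^{js}\|\dot{\Delta}_j F(f)\|_{L^p}\lesssim\sum_{q\geq j}2^{(j-q)s}u_q+\sum_{q<j}2^{(q-j)(N-s)}u_q$, a discrete convolution of $(u_q)$ with summable sequences --- here $s>0$ makes the first sum convolve with a summable kernel, and the choice of an integer $N>s$ does the same for the second. Young's inequality for series then gives $\|F(f)\|_{\dot{B}^s_{p,r}}\lesssim\|f\|_{\dot{B}^s_{p,r}}$ with the announced dependence of the constant, while $\|F(f)\|_{L^\infty}\leq\|F'\|_{L^\infty}\|f\|_{L^\infty}$ is immediate from $F(0)=0$.

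For the second assertion I observe that when $s>0$ it already follows from the first one applied with the exponents $s$ and $d/p$ (using $\dot{B}^{d/p}_{p,1}\hookrightarrow L^\infty$), so only the range $-d\min(\tfrac1p,\tfrac1{p'})<s\leq0$ needs attention. Using $F(0)=0$, I would write $F(x)=F'(0)\,x+x\,\psi(x)$, where $\psi(x):=\int_0^1 F'(tx)\,dt-F'(0)$ is smooth and vanishes at $0$; the first assertion applied to $\psi$ gives $\psi(f)\in\dot{B}^{d/p}_{p,1}\cap L^\infty$, with norm controlled by $\|f\|_{\dot{B}^{d/p}_{p,1}}$. It then suffices to bound $\|f\,\psi(f)\|_{\dot{B}^s_{p,r}}$, for which I would invoke the product estimate of Proposition \ref{prop3.2} with $s_1=d/p$ and $s_2=s$ --- whose hypothesis $s_1+s_2>d\max\{0,\tfrac2p-1\}$ reads precisely $s>-d\min(\tfrac1p,\tfrac1{p'})$ --- together with a real interpolation in the summability index to cover all $r\in[1,\infty]$. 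This yields $F(f)\in\dot{B}^s_{p,r}$, while $F(f)\in\dot{B}^{d/p}_{p,1}$ once more by the first assertion.

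The convergence of the telescoping series, though requiring the usual care with homogeneous Besov spaces, is routine; the two points I expect to be delicate are, in the first assertion, extracting the decay factor $2^{(q-j)N}$ with a constant depending only on $\|f\|_{L^\infty}$ --- which is exactly why one differentiates $m_q$ rather than working with $F(f)$ directly --- and, in the second assertion, the sharpness of the range of $s$: in the Bony decomposition of $f\,\psi(f)$ the two paraproducts are harmless for any $s<d/p$, whereas the remainder $R(f,\psi(f))$ is what imposes the threshold $s>-d\min(\tfrac1p,\tfrac1{p'})$.
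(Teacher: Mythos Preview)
The paper does not prove this proposition; it is stated in Section~\ref{sec:2} as a standard preliminary, with implicit reference to \cite{BCD}. Your argument is the classical one: Meyer's telescoping linearization for the first assertion, and the factorization $F(x)=F'(0)x+x\psi(x)$ combined with a product law for the second. Both parts are correct and coincide with the treatment in \cite{BCD} (Theorem~2.61 and its companions).

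One minor point on the second assertion: to pass from the product estimates of Proposition~\ref{prop3.2} (stated only for third indices $1$ and $\infty$) to general $r$, ``real interpolation in the summability index'' is not quite the right device, since the standard real-interpolation identity $(\dot B^{s_0}_{p,r_0},\dot B^{s_1}_{p,r_1})_{\theta,r}=\dot B^{s_\theta}_{p,r}$ requires $s_0\neq s_1$. It is cleaner simply to run the Bony decomposition of $f\,\psi(f)$ directly for arbitrary $r$: both paraproducts land in $\dot B^{s}_{p,r}$ for every $r$ (use $\psi(f)\in L^\infty$ for $T_{\psi(f)}f$, and $s-d/p<0$ together with the embedding $\dot B^{s}_{p,r}\hookrightarrow\dot B^{s-d/p}_{\infty,r}$ for $T_{f}\psi(f)$), while the remainder estimate in $\dot B^{s}_{p,r}$ holds for all $r$ precisely under $s+d/p>d\max(0,2/p-1)$, i.e.\ $s>-d\min(1/p,1/p')$, as you correctly identify. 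This is a cosmetic adjustment; the substance of your argument is sound.
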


Next we introduce the following Proposition concerns error estimates in the Chemin-Lerner space. The readers could refer to \cite{DD} for proofs.
\begin{prop}\label{prop2.26}
Let $F:\mathbb{R}\rightarrow \mathbb{R}$ be smooth with $F(0)=0$. For all $1\leq p,\,r, \theta\leq\infty$ and $-d\min(\frac{1}{p},\frac{1}{p'})<s<\frac{d}{p}$, if $f_{i}\in\tilde{L}^{\theta}_{T}(\dot{B}^{s}_{p,r})\cap\tilde{L}^{\infty}_{T}(\dot {B}^{\frac {d}{p}}_{p,1})$ ($i=1, 2$), we have that
$$\|F(f_{1})-F(f_{2})\|_{\tilde{L}^{\theta}_{T}(\dot B^{s}_{p,r})}\leq C\|f_{1}-f_{2}\|_{\tilde{L}^{\theta}_{T}(\dot {B}^{s}_{p,r})},$$
where $C>0$ is some constant depends on $\|f_{i}\|_{\tilde{L}^{\infty}_{T}(\dot B^{\frac{d}{p}}_{p,1})}$, $F, s, p$ and $d$. Moreover, the case $s=\frac{d}{p}$ holds true when $r=1$.
\end{prop}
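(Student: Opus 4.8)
The plan is to follow the argument of Danchin and Desjardins \cite{DD}: first reduce the composition difference to a product and then run a Bony-type product estimate directly in the Chemin--Lerner norm. Writing $h_\tau:=f_2+\tau(f_1-f_2)$ for $\tau\in[0,1]$, the fundamental theorem of calculus gives
\begin{equation*}
F(f_1)-F(f_2)=F'(0)(f_1-f_2)+(f_1-f_2)\,g_{12},\qquad g_{12}:=\int_0^1\big[F'(h_\tau)-F'(0)\big]\,d\tau .
\end{equation*}
The first summand has $\tilde L^\theta_T(\dot B^s_{p,r})$-norm equal to $|F'(0)|\,\|f_1-f_2\|_{\tilde L^\theta_T(\dot B^s_{p,r})}$, so the whole matter reduces to (a) controlling $g_{12}$ in $\tilde L^\infty_T(\dot B^{d/p}_{p,1})$ in terms of $M:=\|f_1\|_{\tilde L^\infty_T(\dot B^{d/p}_{p,1})}+\|f_2\|_{\tilde L^\infty_T(\dot B^{d/p}_{p,1})}$, and (b) a product estimate of the type ``$\tilde L^\theta_T(\dot B^s_{p,r})$ times $\tilde L^\infty_T(\dot B^{d/p}_{p,1})$ stays in $\tilde L^\theta_T(\dot B^s_{p,r})$'' valid exactly for $-d\min(\tfrac1p,\tfrac1{p'})<s\le\tfrac dp$.

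For step (a), I note that $\|h_\tau\|_{\tilde L^\infty_T(\dot B^{d/p}_{p,1})}\le M$ uniformly in $\tau$, hence also $\|h_\tau\|_{L^\infty((0,T)\times\R^d)}\lesssim M$ by the embedding $\dot B^{d/p}_{p,1}\hookrightarrow L^\infty$ (Proposition \ref{Prop2.1}). Since $F'-F'(0)$ is smooth and vanishes at $0$ and $\tfrac dp>0$, the Chemin--Lerner analogue of Proposition \ref{prop2.25} applies: this analogue is proved by the same Meyer-type telescoping $G(f)=\sum_q\big[G(\dot S_{q+1}f)-G(\dot S_qf)\big]=\sum_q(\dot\Delta_qf)\,m_q$ with $m_q=\int_0^1 G'(\dot S_qf+t\dot\Delta_qf)\,dt$, the time-integrability being carried along because $\|m_q\|_{L^\infty((0,T)\times\R^d)}\le\sup_{|y|\lesssim M}|G'(y)|$. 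It yields $\|(F'-F'(0))(h_\tau)\|_{\tilde L^\infty_T(\dot B^{d/p}_{p,1})}\le C(M,F)$ uniformly in $\tau$, and integration over $\tau$ (Minkowski) gives $\|g_{12}\|_{\tilde L^\infty_T(\dot B^{d/p}_{p,1})}\le C(M,F)$.

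For step (b) I would prove that, for $-d\min(\tfrac1p,\tfrac1{p'})<s\le\tfrac dp$ (with $r=1$ when $s=\tfrac dp$) and $1\le\theta\le\infty$,
\begin{equation*}
\|uv\|_{\tilde L^\theta_T(\dot B^s_{p,r})}\lesssim\|u\|_{\tilde L^\theta_T(\dot B^s_{p,r})}\,\|v\|_{\tilde L^\infty_T(\dot B^{d/p}_{p,1})},
\end{equation*}
then apply it with $u=f_1-f_2$ and $v=g_{12}$. I decompose $uv=T_uv+T_vu+R(u,v)$ and estimate each piece in the Chemin--Lerner norm, using H\"older in time ($\tfrac1\theta=\tfrac1\theta+\tfrac1\infty$) and the interchange of $\ell^r(\Z)$ with $L^\theta_T$. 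The term $T_vu$ is handled for every $s\in\R$ because $\|\dot S_{j-1}v\|_{L^\infty}\lesssim\|v\|_{L^\infty}\lesssim\|v\|_{\tilde L^\infty_T(\dot B^{d/p}_{p,1})}$ and $\dot S_{j-1}v\,\dot\Delta_j u$ has spectrum in $\{|\xi|\sim2^j\}$. For $T_uv$ one uses Bernstein, $\|\dot S_{k-1}u\|_{L^\theta_T(L^\infty)}\lesssim\sum_{k'\le k-2}2^{k'd/p}\|\dot\Delta_{k'}u\|_{L^\theta_T(L^p)}$, the geometric series converging thanks to $s\le\tfrac dp$ (the case $s=\tfrac dp$, $r=1$, being the borderline one). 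For the remainder, $\dot\Delta_jR(u,v)=\sum_{k\ge j-N_0}\dot\Delta_j\big(\dot\Delta_ku\,\widetilde{\dot\Delta}_kv\big)$ since each summand has spectrum in a ball of radius $\sim2^k$; when $p\ge2$ I bound the product in $L^{p/2}$ and apply Bernstein to $\dot\Delta_j$, while when $p<2$ I bound it in $L^1$ and apply Bernstein to $\widetilde{\dot\Delta}_kv\in L^p\hookrightarrow L^{p'}$. Inserting $\|\dot\Delta_ku\|_{L^\theta_T(L^p)}=2^{-ks}a_k\|u\|_{\tilde L^\theta_T(\dot B^s_{p,r})}$ with $(a_k)\in\ell^r$ and $\|\widetilde{\dot\Delta}_kv\|_{L^\infty_T(L^p)}\lesssim2^{-kd/p}b_k\|v\|_{\tilde L^\infty_T(\dot B^{d/p}_{p,1})}$ with $(b_k)\in\ell^1$, both cases produce
\begin{equation*}
2^{js}\|\dot\Delta_jR(u,v)\|_{L^\theta_T(L^p)}\lesssim\|u\|_{\tilde L^\theta_T(\dot B^s_{p,r})}\,\|v\|_{\tilde L^\infty_T(\dot B^{d/p}_{p,1})}\sum_{k\ge j-N_0}2^{(j-k)(s+d\min(1/p,1/p'))}a_kb_k ,
\end{equation*}
whose $\ell^r(\Z_j)$-norm is finite precisely because $s+d\min(\tfrac1p,\tfrac1{p'})>0$ (convolution with an $\ell^1$ sequence). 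Summing the three contributions gives the product estimate, and combining with step (a) closes the proof; the final sentence ``$s=\tfrac dp$ when $r=1$'' is covered by the borderline case of the $T_uv$ estimate.

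I expect the remainder $R(u,v)$ in the negative-regularity range $s\le0$ to be the main obstacle: a crude estimate that throws $v$ into $L^\infty$ only reaches $s>0$, so one genuinely needs the two case-dependent H\"older--Bernstein splittings above to attain the sharp threshold $s>-d\min(\tfrac1p,\tfrac1{p'})$, which is exactly the lower restriction in the statement. The remaining difficulties are bookkeeping: transporting the anisotropic triple $(\theta,p,r)$ through the Chemin--Lerner norms (this is where Remark \ref{Rem2.1} and the commutation of $\ell^r(\Z)$ with $L^\theta_T$ intervene) and verifying that Proposition \ref{prop2.25} upgrades to its $\tilde L^\infty_T$ form as used in step (a).
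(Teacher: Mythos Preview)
Your proposal is correct and follows the standard route: the paper itself does not supply a proof of this proposition but refers the reader to Danchin--Desjardins \cite{DD}, and your argument (Taylor reduction to a product, composition estimate on $g_{12}$ in $\tilde L^\infty_T(\dot B^{d/p}_{p,1})$, then the Bony product estimate $\tilde L^\theta_T(\dot B^s_{p,r})\times\tilde L^\infty_T(\dot B^{d/p}_{p,1})\to\tilde L^\theta_T(\dot B^s_{p,r})$ with the case split $p\ge 2$ versus $p<2$ for the remainder) is precisely that approach. The handling of the borderline $s=\tfrac dp$, $r=1$ case and the identification of the sharp lower threshold $s>-d\min(\tfrac1p,\tfrac1{p'})$ via the remainder are both accurate.
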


In addition, we also recall the classical \emph{Bernstein inequality}:
\begin{equation}\label{Eq:2.6}
\|D^{k}f\|_{L^{b}}
\leq C^{1+k} \lambda^{k+d(\frac{1}{a}-\frac{1}{b})}\|f\|_{L^{a}}
\end{equation}
that holds for all function $f$ such that $\mathrm{Supp}\,\mathcal{F}f\subset\left\{\xi\in \mathbb{R}^{d}: |\xi|\leq R\lambda\right\}$ for some $R>0$
and $\lambda>0$, if $k\in\mathbb{N}$ and $1\leq a\leq b\leq\infty$.

More generally, if we assume $f$ to satisfy $\mathrm{Supp}\,\mathcal{F}f\subset\{\xi\in \mathbb{R}^{d}:
R_{1}\lambda\leq|\xi|\leq R_{2}\lambda\}$ for some $0<R_{1}<R_{2}$ and $\lambda>0$, then for any smooth
homogeneous of degree $m$ function $A$ on $\mathbb{R}^d\setminus\{0\}$ and $1\leq a\leq\infty$, we have
(see e.g. Lemma 2.2 in \cite{BCD}):
\begin{equation}\label{Eq:2.7}
\|A(D)f\|_{L^{a}}\approx\lambda^{m}\|f\|_{L^{a}}.
\end{equation}
An obvious  consequence of (\ref{Eq:2.6}) and (\ref{Eq:2.7}) is that
$\|D^{k}f\|_{\dot{B}^{s}_{p, r}}\thickapprox \|f\|_{\dot{B}^{s+k}_{p, r}}$ for all $k\in\mathbb{N}$.

Let us recall the maximal regularity property of heat equations.
\begin{prop}\label{prop2.2}
Let $s \in \mathbb{R}$, $(p,r)\in [1,\infty]^{2}$ and $1\leq\rho_{1}\leq\rho\leq\infty.$ Let $u$ satisfies
\begin{equation}\label{3.1}
\left\{
\begin{array}{l}\partial_{t}u-\mu\Delta u=f,\\ [1mm]
u|_{t=0}=u_{0}(x).\\[1mm]
 \end{array} \right.
\end{equation}
Then for all $T > 0$ the following priori estimate is fulfilled:
$$\mu^{\frac{1}{\rho}}\|u\|_{\tilde{L}^{\rho}_{T}(\dot{B}^{s+\frac{2}{\rho}}_{p,r})}\lesssim  \|u_{0}\|_{\dot{B}^{s}_{p,r}}+\mu^{\frac{1}{\rho_1}-1}\|f\|_{\tilde{L}_{T}^{\rho_{1}}(\dot{B}^{s-2+\frac{2}{\rho_{1}}}_{p,r})}.$$
\end{prop}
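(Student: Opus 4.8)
The statement is the standard maximal $L^{\rho}_{t}$-regularity estimate for the heat semigroup, phrased in the Chemin--Lerner scale, and the plan is the classical frequency-localized Duhamel argument (cf.\ Chap.~3 of \cite{BCD}). First I would apply the dyadic block $\dot{\Delta}_{q}$ to \eqref{3.1}: since $\dot{\Delta}_{q}$ commutes with $\partial_{t}$ and $\Delta$, the function $u_{q}\triangleq\dot{\Delta}_{q}u$ solves $\partial_{t}u_{q}-\mu\Delta u_{q}=f_{q}$ with $f_{q}\triangleq\dot{\Delta}_{q}f$ and datum $\dot{\Delta}_{q}u_{0}$, so Duhamel's formula gives $u_{q}(t)=e^{t\mu\Delta}\dot{\Delta}_{q}u_{0}+\int_{0}^{t}e^{(t-\tau)\mu\Delta}f_{q}(\tau)\,d\tau$. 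The essential input is the shell smoothing estimate: there is $c>0$, independent of $q$ and $\mu$, such that $\|e^{t\mu\Delta}g\|_{L^{p}}\leq Ce^{-c\mu 2^{2q}t}\|g\|_{L^{p}}$ for every $g$ with $\mathrm{Supp}\,\mathcal{F}g\subset\mathcal{C}(0,\tfrac{3}{4}2^{q},\tfrac{8}{3}2^{q})$; this follows from the uniform $L^{1}$-bound on the inverse Fourier transform of $\eta\mapsto e^{-s|\eta|^{2}}\varphi(\eta)$ (the Bernstein-type Lemma~2.4 in \cite{BCD}) after the rescaling $\xi=2^{q}\eta$, and it applies to both $\dot{\Delta}_{q}u_{0}$ and $f_{q}(\tau)$.

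Second, I would take the $L^{p}$-norm in $x$ and then the $L^{\rho}$-norm in $t$ over $(0,T)$. For the homogeneous term, $\|e^{\,\cdot\,\mu\Delta}\dot{\Delta}_{q}u_{0}\|_{L^{\rho}_{T}(L^{p})}\leq C\,\|e^{-c\mu 2^{2q}\cdot}\|_{L^{\rho}(0,\infty)}\|\dot{\Delta}_{q}u_{0}\|_{L^{p}}\leq C(\mu 2^{2q})^{-1/\rho}\|\dot{\Delta}_{q}u_{0}\|_{L^{p}}$. For the Duhamel term the time integral is the causal convolution of $\mathbf{1}_{t>0}e^{-c\mu 2^{2q}t}$ with $\mathbf{1}_{(0,T)}(t)\|f_{q}(t)\|_{L^{p}}$, so Young's convolution inequality with $1+\tfrac{1}{\rho}=\tfrac{1}{m}+\tfrac{1}{\rho_{1}}$ --- which is admissible because $1\leq\rho_{1}\leq\rho\leq\infty$ forces $\tfrac{1}{m}=1+\tfrac{1}{\rho}-\tfrac{1}{\rho_{1}}\in[0,1]$ --- yields $\big\|\int_{0}^{\cdot}e^{(\cdot-\tau)\mu\Delta}f_{q}(\tau)\,d\tau\big\|_{L^{\rho}_{T}(L^{p})}\leq C\,\|e^{-c\mu 2^{2q}\cdot}\|_{L^{m}(0,\infty)}\,\|f_{q}\|_{L^{\rho_{1}}_{T}(L^{p})}\leq C(\mu 2^{2q})^{-1/m}\|f_{q}\|_{L^{\rho_{1}}_{T}(L^{p})}$; the multiplicative constant coming from $\|e^{-c\mu 2^{2q}\cdot}\|_{L^{m}}=(cm\mu 2^{2q})^{-1/m}$ stays bounded uniformly for $m\in[1,\infty]$.

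Third, I would multiply by $2^{q(s+2/\rho)}$ and take the $\ell^{r}(\mathbb{Z})$ norm, using the triangle inequality to separate the homogeneous and the Duhamel contributions. The powers of $2^{q}$ collapse exactly: $2^{q(s+2/\rho)}(\mu 2^{2q})^{-1/\rho}=\mu^{-1/\rho}2^{qs}$ and $2^{q(s+2/\rho)}(\mu 2^{2q})^{-1/m}=\mu^{-1/m}2^{q(s-2+2/\rho_{1})}$, while $\mu^{1/\rho}\cdot\mu^{-1/\rho}=1$ and $\mu^{1/\rho}\cdot\mu^{-1/m}=\mu^{1/\rho_{1}-1}$. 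Hence, after multiplying the whole inequality by $\mu^{1/\rho}$, the homogeneous part contributes $\big\|2^{qs}\|\dot{\Delta}_{q}u_{0}\|_{L^{p}}\big\|_{\ell^{r}(\mathbb{Z})}=\|u_{0}\|_{\dot{B}^{s}_{p,r}}$ and the Duhamel part contributes $\mu^{1/\rho_{1}-1}\big\|2^{q(s-2+2/\rho_{1})}\|f_{q}\|_{L^{\rho_{1}}_{T}(L^{p})}\big\|_{\ell^{r}(\mathbb{Z})}=\mu^{1/\rho_{1}-1}\|f\|_{\tilde{L}^{\rho_{1}}_{T}(\dot{B}^{s-2+2/\rho_{1}}_{p,r})}$, which is precisely the asserted bound.

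The argument is routine and I do not anticipate a substantial obstacle; the only care required is the endpoint bookkeeping ($\rho=\infty$, $\rho_{1}=\rho$, or $r=\infty$, handled by the usual convention of replacing sums by suprema) and the observation that Young's inequality is applied \emph{block by block}, i.e.\ the time integration is done before the $\ell^{r}$ summation. This last point is exactly why the natural target is the Chemin--Lerner space $\tilde{L}^{\rho}_{T}(\dot{B}^{s+2/\rho}_{p,r})$ rather than $L^{\rho}_{T}(\dot{B}^{s+2/\rho}_{p,r})$, and why no condition such as $r\geq\rho$ needs to be imposed.
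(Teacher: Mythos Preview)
Your argument is correct and is precisely the standard frequency-localized Duhamel proof; note that the paper does not actually supply a proof of this proposition but merely recalls it as a known maximal-regularity property (the reference being Chap.~3 of \cite{BCD}). Your write-up matches that classical argument step for step, so there is nothing to compare.
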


\subsection{{\bf Hybrid Besov space}}
\hspace*{\fill}

Owing to the low frequencies, the homogeneous Besov spaces fail to have the embedding:
$\dot{B}^{s}_{p,r_{1}}\hookrightarrow\dot{B}^{t}_{p,r_{2}}$ for $s>t$. This motivates the definition of Hybrid Besov spaces, which were introduced by
Danchin \cite{D} and generalized a little bit by Haspot \cite{H2}.

\begin{defn}\label{defn hybrid}
Let $s,t\in \mathbb{R}$, $p,q,r_{1},r_{2}\in [1,\infty]$. We denote $\dot{B}^{s,t}_{(p,r_{1}),(q,r_{2})}$ by the space of functions $f\in\mathcal{S}'_{0}$ equipped with the following
norm:

\begin{eqnarray*}\|f\|_{\dot{B}^{s,t}_{(p,r_{1}),(q,r_{2})}}=\Big\{\sum_{j\geq j_{0}}2^{sjr_{1}}\|\dot{\Delta}_{j}f\|^{r_{1}}_{L^{p}}\Big\}^{\frac{1}{r_{1}}}+\Big\{\sum_{j<
j_{0}}2^{tjr_{2}}\|\dot{\Delta}_{j}f\|^{r_{2}}_{L^{q}}\Big\}^{\frac{1}{r_{2}}},
 \end{eqnarray*}
for some integer $j_{0}$.
%For case $r_{2}=\infty$, we write
% \begin{eqnarray*}\|f\|_{\dot{B}^{s,t}_{(p,r_{1}),(q,\infty)}}=\Big\{\sum_{j\geq
% j_{0}}2^{sjr_{1}}\|f\|^{r_{1}}_{L^{p}}\Big\}^{\frac{1}{r_{1}}}+\sup_{j<j_{0}}\big\{2^{tj}\|f\|_{L^{q}}\big\}\,\, \mathrm{for}\,\, r_{1}\in [1,\infty).
 %\end{eqnarray*}

%For $r_{1}, r_{2}=\infty$, it reads that
  %\begin{eqnarray*}\|f\|_{\dot{B}^{s,t}_{(p,\infty),(q,\infty)}}=\sup_{j\geq j_{0}}\big\{2^{sj}\|f\|_{L^{p}}\big\}+\sup_{j<j_{0}}\big\{2^{tj}\|f\|_{L^{q}}\big\}.
 %\end{eqnarray*}
\end{defn}
For convenience, we would like to write $\|f\|_{\dot{B}^{s,t}_{(p,r_{1}),(q,r_{2})}}=\|f\|^{h}_{{{\dot B}^{s}}_{p,r_{1}}}+\|f\|^{\ell}_{{{\dot B}^{t}}_{q,r_{2}}}.$ Clearly, $\dot{B}^{s,t}_{(p,r),(q,r)}=\dot{B}^{s,t}_{p,q,r}$, $\dot{B}^{s,s}_{(p,r),(p,r)}=\dot{B}^{s}_{p,r}$. Moreover, one can define the hybrid Chemin-Lerner spaces
$\tilde{L}^{\rho_{1},\rho_{2}}_{T}(\dot{B}^{s,t}_{(p,r_{1}),(q,r_{2})})$ with norm:
 \begin{eqnarray*}\|f\|_{\tilde{L}^{\rho_{1},\rho_{2}}_{T}(\dot{B}^{s,t}_{(p,r_{1}),(q,r_{2})})}=\Big\{2^{sj}\|\dot{\Delta}_{j}f\|_{L^{\rho_{1}}_{T}L^{p}}\Big\}_{l^{r_{1}}_{j\geq
 j_{0}}}+\Big\{2^{tj}\|\dot{\Delta}_{j}f\|_{L^{\rho_{2}}_{T}L^{q}}\Big\}_{l^{r_{2}}_{j< j_{0}}}
 \end{eqnarray*}
for $T>0$. Similarly, we write $\|f\|_{\tilde{L}^{\rho_{1},\rho_{2}}_{T}(\dot{B}^{s,t}_{(p,r_{1}),(q,r_{2})})}=\|f\|^{h}_{\tilde{L}^{\rho_{1}}_{T}(\dot{B}^{s}_{p,r_{1}})}+
\|f\|^{\ell}_{\tilde{L}^{\rho_{2}}_{T}(\dot{B}^{t}_{q,r_{2}})}$. Also,
$\tilde{L}^{\rho,\rho}_{T}(\dot{B}^{s,t}_{(p,r_{1}),(q,r_{2})})=\tilde{L}^{\rho}_{T}(\dot{B}^{s,t}_{(p,r_{1}),(q,r_{2})})$. For notational simplicity, index $T$ is omitted if $T=+\infty$, changing $[0,T]$ to $[0,+\infty)$ in the definition above.

\begin{rem}
If $s\geq t+\frac{d}{p}-\frac{d}{q}$, then $\dot{B}^{s,t}_{(p,r_{1}),(q,r_{2})}=\dot{B}^{s}_{p,r_{1}}\cap\dot{B}^{t}_{q,r_{2}}$. Otherwise, if $s<
    t+\frac{d}{p}-\frac{d}{q}$, then $\dot{B}^{s,t}_{(p,r_{1}),(q,r_{2})}=\dot{B}^{s}_{p,r_{1}}+\dot{B}^{t}_{q,r_{2}}$.
\end{rem}

\subsection{Besov-Gevrey estimates for product and composition}
As the preliminary part, we would also like to list nonlinear estimates involving Besov-Gevrey regularity, which have been shown by \cite{CDX}. First of all, let us give the following elementary conclusions. The reader is referred to \cite{BBT} for those proofs.
\begin{lem}\label{lem5.1} Consider the operator $F:=e^{-(\sqrt{t-s}+\sqrt s-\sqrt t)\Lambda_1}$ for $0\leq s\leq t$. Then $F$ is either the identity operator or is an $L^1$ kernel whose
$L^1$ norm is bounded independent of $s,t$.
\end{lem}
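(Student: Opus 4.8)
\textbf{Proof strategy for Lemma \ref{lem5.1}.}
The plan is to analyze the symbol $\phi(\xi):=e^{-(\sqrt{t-s}+\sqrt{s}-\sqrt{t})|\xi|_1}$ and show that it is the Fourier symbol of a measure whose total mass is uniformly bounded. First I would isolate the exponent: set $a(s,t):=\sqrt{t-s}+\sqrt{s}-\sqrt{t}$ and observe that, since the map $x\mapsto\sqrt{x}$ is concave and subadditive, one has $0\le a(s,t)$ for all $0\le s\le t$, with $a=0$ precisely when $s\in\{0,t\}$. Hence either $a=0$, in which case $F=\mathrm{Id}$, or $a>0$. In the latter case $\phi(\xi)=e^{-a|\xi|_1}=\prod_{i=1}^d e^{-a|\xi_i|}$ is, up to the scaling $\xi_i\mapsto a\xi_i$, a tensor product of one-dimensional Poisson kernels.

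The key computational step is the standard identity that the inverse Fourier transform of $\xi\mapsto e^{-|\xi|}$ on $\R$ is (a constant times) the Poisson kernel $P(x)=\frac{1}{\pi}\frac{1}{1+x^2}$, which is nonnegative and has $L^1$ norm $1$. Therefore $\mathcal{F}^{-1}(e^{-a|\cdot|_1})(x)=\prod_{i=1}^d a^{-1}P(x_i/a)=:K_a(x)$ is nonnegative with $\|K_a\|_{L^1(\R^d)}=\|P\|_{L^1(\R)}^d=1$, independently of the value of $a>0$ (the scaling by $a$ is $L^1$-invariant). Since $F$ acts by convolution with $K_a$, Young's inequality gives that $F$ is bounded on every $L^p$, $1\le p\le\infty$, with operator norm $\le\|K_a\|_{L^1}=1$, and in particular $F$ is an $L^1$ kernel with $L^1$ norm bounded by $1$ uniformly in $s,t$. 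This is exactly the dichotomy asserted: $F$ is either the identity (when $a=0$) or convolution against an $L^1$ kernel of uniformly bounded mass (when $a>0$).

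The only mild subtlety — and the point I would be most careful about — is the sign and subadditivity claim $\sqrt{t-s}+\sqrt{s}\ge\sqrt{t}$, which is what makes $a\ge 0$ and hence makes $e^{-a|\xi|_1}$ a \emph{decaying} rather than growing symbol; this follows at once by squaring, since $(\sqrt{t-s}+\sqrt{s})^2=t+2\sqrt{s(t-s)}\ge t$. One should also note that the bound is genuinely uniform because the dilation parameter $a$ enters $K_a$ only through an $L^1$-preserving rescaling, so no smallness or largeness hypothesis on $s,t$ is needed. This reasoning is precisely the one used in \cite{BBT} for the analogous heat-flow Gevrey estimates, and I would simply invoke that reference for the details of the Poisson-kernel computation rather than reproduce it.
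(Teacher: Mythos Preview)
Your proposal is correct and is precisely the standard argument from \cite{BBT} that the paper defers to (the paper does not give its own proof of this lemma, only the reference). The key points --- nonnegativity of $a(s,t)=\sqrt{t-s}+\sqrt s-\sqrt t$ by subadditivity of the square root, the tensorization $e^{-a|\xi|_1}=\prod_i e^{-a|\xi_i|}$, and the scale-invariant $L^1$ normalization of the one-dimensional Poisson kernel --- are exactly what is needed, so there is nothing to add.
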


\begin{lem}\label{lem5.2}The operator $F=e^{\frac{1}{2}a\Delta+{\sqrt a}\Lambda_1}$ is a Fourier multiplier which maps boundedly $L^p\to L^p$, $1< p<\infty$, and
its operator norm is uniformly bounded with respect to $a\geq0$.
\end{lem}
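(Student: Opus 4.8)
\noindent\textbf{Sketch of proof of Lemma~\ref{lem5.2}.}
The plan is to factor the Fourier symbol of $F$ into a tensor product of one-dimensional symbols, to reduce the $L^p(\R^d)$ bound to a single one-dimensional Fourier multiplier by a scaling, and to dispose of that multiplier via the Mikhlin multiplier theorem.

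Since $\Delta=\sum_{j=1}^d\partial_{x_j}^2$ and $|\xi|_1=\sum_{j=1}^d|\xi_j|$, the symbol of $F$ factorizes as
\[
m_a(\xi)=e^{-\frac12 a|\xi|^2+\sqrt a\,|\xi|_1}=\prod_{j=1}^d n_a(\xi_j),\qquad n_a(\eta):=e^{-\frac12 a\eta^2+\sqrt a\,|\eta|}.
\]
Hence $F$ is the composition of the one-dimensional multiplier operators $M_{n_a}$ applied successively in the variables $x_1,\dots,x_d$. A routine Fubini argument --- bounding the $L^p(\R^d)$ norm by iterated one-dimensional $L^p$ norms and applying the one-variable estimate slice by slice --- shows that if $M_{n_a}$ is bounded on $L^p(\R)$ with norm $N_p(a)$, then $F$ is bounded on $L^p(\R^d)$ with $\|F\|_{L^p\to L^p}\le N_p(a)^d$. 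So everything reduces to bounding $N_p(a)$ uniformly in $a\ge0$.

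For $a=0$ one has $n_0\equiv1$, hence $N_p(0)=1$. For $a>0$, the identity $n_a(\eta)=n_1(\sqrt a\,\eta)$ together with the dilation-invariance of $L^p$ multiplier norms gives $N_p(a)=N_p(1)$, so it only remains to see that $M_{n_1}$ is bounded on $L^p(\R)$ for $1<p<\infty$. Completing the square, $n_1(\eta)=e^{1/2}e^{-\frac12(|\eta|-1)^2}$, so $\|n_1\|_{L^\infty}\le e^{1/2}$; moreover $n_1$ is smooth on $\R\setminus\{0\}$ with $n_1'(\eta)=(\mathrm{sgn}(\eta)-\eta)\,n_1(\eta)$, whence $\bigl|\eta\,n_1'(\eta)\bigr|=\bigl||\eta|-\eta^2\bigr|\,e^{1/2}e^{-\frac12(|\eta|-1)^2}$ is bounded (a polynomial times a Gaussian in $|\eta|-1$). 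The one-dimensional Mikhlin multiplier theorem then yields $M_{n_1}\in\mathcal L(L^p(\R))$ for $1<p<\infty$, and uniformity in $a$ is built into the scaling identity. In fact one can do slightly better: $n_1$ and $\eta^2 n_1$ lie in $L^1(\R)$ while $n_1$ is smooth apart from a corner at the origin, so the convolution kernel $k_1$ of $M_{n_1}$ satisfies $|k_1(x)|\lesssim\min\{1,|x|^{-2}\}\in L^1(\R)$; thus $F$ is convolution with $\prod_{j=1}^d\sqrt a\,k_1(\sqrt a\,x_j)\in L^1(\R^d)$, of $L^1$-norm $\|k_1\|_{L^1}^d$ independent of $a$, and is therefore bounded on every $L^p$ with $1\le p\le\infty$.

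The only point that requires a little care is that one should \emph{not} apply the $d$-dimensional Mikhlin--H\"ormander theorem to $m_a$ directly, since $m_a$ fails to be $C^1$ across the hyperplanes $\{\xi_j=0\}$; the tensor-product factorization is exactly what circumvents this, because the one-dimensional factor $n_a$ is non-smooth only at the origin, where the Mikhlin hypothesis is allowed to fail. Everything else is entirely standard.
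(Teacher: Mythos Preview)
Your proof is correct and follows exactly the approach of the cited reference \cite{BBT}: the key observation is that the $\ell^1$-based symbol $|\xi|_1$ makes the multiplier tensorize into one-dimensional factors, after which scaling and either the one-dimensional Mikhlin theorem or the $L^1$-kernel estimate handle the remaining one-variable problem. The paper itself gives no proof of this lemma, deferring entirely to \cite{BBT}.
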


Proving the Gevrey regularity of solutions will be based on continuity results for the family
$(\mathcal{B}_{t})_{t\geq 0}$ of bilinear operators defined by
\begin{eqnarray*}
\mathcal{B}_{t}(f,g)(t,x)&=&e^{\sqrt {c_{0}t}\Lambda_1}(e^{-\sqrt {c_{0}t}\Lambda_1}fe^{-\sqrt {c_{0}t}\Lambda_1}g)(x)
\nonumber\\&=& \frac{1}{(2\pi)^{2d}}\int_{\mathbb{R}^d}\int_{\mathbb{R}^d}e^{ix\cdot(\xi+\eta)}e^{\sqrt {c_{0}t}(|\xi+\eta|_{1}-|\xi|_{1}-|\eta|_{1})}\widehat f(\xi)\widehat g(\eta)d\xi d\eta
\end{eqnarray*}
for some $c_0>0$. Following from \cite{BBT} and \cite{L-book}, one can introduce the following operators acting on functions depending
on one real variable:
$$K_{1}f=\frac{1}{2\pi}\int_{0}^{\infty}e^{ix\xi}\widehat f(\xi)d\xi,$$
$$K_{-1}f=\frac{1}{2\pi}\int_{-\infty}^{0}e^{ix\xi}\widehat f(\xi)d\xi,$$
and define $L_{a,1}$ and $L_{a,-1}$ as follows:
$$L_{a,1}f=f\quad \mbox{and}\quad L_{a,-1}f=\frac{1}{2\pi}\int_{R^d}e^{ix\xi}e^{-2a|\xi|}\widehat f(\xi)d\xi.$$
Set
$$Z_{t,\alpha,\beta}=K_{\beta_{1}}L_{\sqrt{c_{0}t},\alpha_{1}\beta_{1}}\otimes...\otimes K_{\beta_{d}}L_{\sqrt{c_{0}t},\alpha_{d}\beta_{d}}
\quad \mbox{and}\quad K_{\alpha}=K_{\alpha_{1}}\otimes ...\otimes K_{\alpha_{d}}$$
for $t\geq 0$, $\alpha=(\alpha_{1}, ... ,\alpha_{d})$ and $\beta=(\beta_{1}, ...,\beta_{d})\in \lbrace-1,1\rbrace^d$.
Then it follows that
$$\mathcal{B}_{t}(f,g)=\sum_{(\alpha,\beta,\gamma)\in({\{-1,1\}^{d})}^3}K_{\alpha}(Z_{t,\alpha,\beta}fZ_{t,\alpha,\gamma}g).$$
It is not difficult to see that $K_{\alpha},Z_{t,\alpha, \beta}$ are linear combinations of smooth homogeneous of
degree zero Fourier multipliers, which are bounded on $L^p$ for $1<p<\infty$.
\begin{lem}\label{lem5.3} (\cite{BBT,L-book})
For any $1<p,p_{1},p_{2}<\infty$ with $\frac{1}{p}=\frac{1}{p_{1}}+\frac{1}{p_{2}}$, we have for some constant $C$
independent of $t\geq0$,
$$\|\mathcal{B}_{t}(f,g)\|_{L^p}\leq C\|f\|_{L^{p_{1}}}\|g\|_{L^{p_{2}}}.$$
\end{lem}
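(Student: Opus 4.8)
The plan is to start from the explicit decomposition
$$\mathcal{B}_{t}(f,g)=\sum_{(\alpha,\beta,\gamma)\in(\{-1,1\}^{d})^{3}}K_{\alpha}\bigl(Z_{t,\alpha,\beta}f\,\cdot\,Z_{t,\alpha,\gamma}g\bigr)$$
recorded above, which is a \emph{finite} sum of $2^{3d}$ terms, so by the triangle inequality it suffices to bound each summand by $C\|f\|_{L^{p_1}}\|g\|_{L^{p_2}}$ with $C$ independent of $t$; since $p_1,p_2<\infty$ it is enough to treat $f,g\in\mathcal{S}$ and then pass to the limit by density. Fix a triple $(\alpha,\beta,\gamma)$. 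The first step is to discard the outer operator $K_{\alpha}$: since $K_{\alpha}=K_{\alpha_1}\otimes\cdots\otimes K_{\alpha_d}$ and each $K_{\pm1}$ is, up to a multiplicative constant, $\tfrac12(\mathrm{Id}\pm iH)$ with $H$ the one-dimensional Hilbert transform, $K_{\alpha}$ is bounded on $L^{p}(\mathbb{R}^{d})$ for every $1<p<\infty$ (M.\ Riesz theorem applied coordinatewise), with operator norm depending only on $p$ and $d$, in particular not on $t$. Hence $\|K_{\alpha}(Z_{t,\alpha,\beta}f\cdot Z_{t,\alpha,\gamma}g)\|_{L^{p}}\lesssim\|Z_{t,\alpha,\beta}f\cdot Z_{t,\alpha,\gamma}g\|_{L^{p}}$.

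The second step is Hölder's inequality with the relation $\tfrac1p=\tfrac1{p_1}+\tfrac1{p_2}$, which gives
$$\|Z_{t,\alpha,\beta}f\cdot Z_{t,\alpha,\gamma}g\|_{L^{p}}\le\|Z_{t,\alpha,\beta}f\|_{L^{p_1}}\,\|Z_{t,\alpha,\gamma}g\|_{L^{p_2}},$$
so it remains to bound $Z_{t,\alpha,\beta}$ on $L^{p_1}$ and $Z_{t,\alpha,\gamma}$ on $L^{p_2}$ uniformly in $t\ge0$. By definition $Z_{t,\alpha,\beta}=\bigotimes_{i=1}^{d}K_{\beta_i}L_{\sqrt{c_0 t},\alpha_i\beta_i}$. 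In each coordinate, $K_{\beta_i}$ is again bounded on $L^{p_1}(\mathbb{R})$ for $1<p_1<\infty$ as above, while $L_{a,1}=\mathrm{Id}$ and $L_{a,-1}$ is convolution with the Fourier multiplier $e^{-2a|\xi|}$, i.e.\ with the one-dimensional Poisson kernel $P_{2a}(x)=\tfrac1\pi\,\tfrac{2a}{4a^{2}+x^{2}}$, which is a probability density for every $a\ge0$; hence $L_{a,\pm1}$ is a contraction on every $L^{p_1}(\mathbb{R})$, uniformly in $a$. Taking tensor products, $Z_{t,\alpha,\beta}$ is bounded on $L^{p_1}(\mathbb{R}^{d})$ with a norm independent of $t$, and likewise for $Z_{t,\alpha,\gamma}$ on $L^{p_2}$. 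Combining the two steps and summing over the $2^{3d}$ triples yields $\|\mathcal{B}_t(f,g)\|_{L^{p}}\le C\|f\|_{L^{p_1}}\|g\|_{L^{p_2}}$ with $C=C(p,p_1,p_2,d)$, as claimed.

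I expect the substance of this argument to be bookkeeping rather than hard analysis. The uniformity in $t$ is essentially automatic: all of the $t$-dependence of $\mathcal{B}_t$ is carried by the factors $L_{\sqrt{c_0 t},-1}$, and these have operator norm exactly $1$ on every $L^{q}$ because their convolution kernels are probability densities, whereas $K_{\alpha}$ and the $K_{\beta_i}$ carry no $t$-dependence at all. The genuine constraint is the endpoint restriction $1<p,p_1,p_2<\infty$: it is forced by the Hilbert-transform-type projections $K_{\pm1}$, which are unbounded on $L^{1}$ and $L^{\infty}$ and cannot be accommodated by this method. Finally, the reason the coordinatewise tensor-product factorization of $\mathcal{B}_t$ is available at all is precisely that the Gevrey weight is built from the $\ell^{1}$ symbol $|\xi|_1=\sum_i|\xi_i|$ rather than the Euclidean norm — the role of the switch to $\Lambda_1$ noted in the footnote — and the whole scheme follows \cite{BBT,L-book}.
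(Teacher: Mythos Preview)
Your proposal is correct and follows precisely the route the paper sketches: the paper records the decomposition $\mathcal{B}_t(f,g)=\sum_{\alpha,\beta,\gamma}K_\alpha(Z_{t,\alpha,\beta}f\,Z_{t,\alpha,\gamma}g)$, observes that $K_\alpha$ and $Z_{t,\alpha,\beta}$ are bounded on $L^p$ for $1<p<\infty$, and then defers the details to \cite{BBT,L-book}. Your write-up supplies exactly those details --- the Hilbert-transform identification of $K_{\pm1}$ and the Poisson-kernel identification of $L_{a,-1}$ --- and the Poisson-kernel argument in particular makes the $t$-uniformity transparent (operator norm $1$), which is slightly sharper than the paper's passing remark about ``homogeneous of degree zero multipliers'' (the latter phrase does not literally cover $L_{a,-1}$, whose symbol $e^{-2a|\xi|}$ is not homogeneous). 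One cosmetic point: for $a=0$ the Poisson kernel degenerates to a Dirac mass rather than an $L^1$ density, but then $L_{0,-1}=\mathrm{Id}$ and there is nothing to prove.
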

Based on Lemma \ref{lem5.3}, by using Bony's decomposition, one can prove the following product estimates with Gevrey multiplier. The interested reader is referred to \cite{CDX} for the proof details.

\begin{prop}\label{prop5.1}
Let $1< p <\infty$. If $s_{1},s_{2}\leq\frac{d}{p}$, $s_{1}+s_{2}>d\max\{0,\frac{2}{p}-1\}$, then it holds that
$$\|e^{\sqrt {c_{0}t}\Lambda_1}(fg)\|_{\dot{B}^{s_{1}+s_{2}-\frac{d}{p}}_{p,1}}\lesssim\|F\|_{\dot{B}^{s_{1}}_{p,1}}\|G\|_{\dot{B}^{s_{2}}_{p,1}}.$$
Moreover, if $s_{1}\leq\frac{d}{p}$, $s_{2}<\frac{d}{p}$, $s_{1}+s_{2}\geq d\max\{0,\frac{2}{p}-1\}$, then it holds that
$$\|e^{\sqrt {c_{0}t}\Lambda_1}(fg)\|_{\dot{B}^{s_{1}+s_{2}-\frac{d}{p}}_{p,\infty}}\lesssim\|F\|_{\dot{B}^{s_{1}}_{p,1}}\|G\|_{\dot{B}^{s_{2}}_{p,\infty}},$$
where $F=e^{\sqrt {c_{0}t}\Lambda_1}{f}$, $G=e^{\sqrt {c_{0}t}\Lambda_1}{g}$.
\end{prop}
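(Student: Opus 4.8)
The plan is to transfer the Gevrey multiplier $e^{\sqrt{c_{0}t}\Lambda_{1}}$ onto the two factors separately by means of the bilinear operators $\mathcal{B}_{t}$, and then to replay, almost verbatim, the classical proof of Proposition \ref{prop3.2}. First I would set $F=e^{\sqrt{c_{0}t}\Lambda_{1}}f$ and $G=e^{\sqrt{c_{0}t}\Lambda_{1}}g$, so that $f=e^{-\sqrt{c_{0}t}\Lambda_{1}}F$, $g=e^{-\sqrt{c_{0}t}\Lambda_{1}}G$ and, by the very definition of $\mathcal{B}_{t}$,
\begin{equation*}
e^{\sqrt{c_{0}t}\Lambda_{1}}(fg)=\mathcal{B}_{t}(F,G).
\end{equation*}
Since $e^{\pm\sqrt{c_{0}t}\Lambda_{1}}$ commutes with each $\dot{\Delta}_{j}$ and $\dot{S}_{j}$, writing $F=\sum_{k}\dot{\Delta}_{k}F$, $G=\sum_{k'}\dot{\Delta}_{k'}G$ and using the bilinearity of $\mathcal{B}_{t}$ yields a ``Gevrey Bony decomposition''
\begin{equation*}
\mathcal{B}_{t}(F,G)=\mathcal{T}^{t}_{F}G+\mathcal{R}^{t}(F,G)+\mathcal{T}^{t}_{G}F,
\end{equation*}
with $\mathcal{T}^{t}_{F}G=\sum_{j'}\mathcal{B}_{t}(\dot{S}_{j'-1}F,\dot{\Delta}_{j'}G)$ and $\mathcal{R}^{t}(F,G)=\sum_{j'}\sum_{|i|\leq1}\mathcal{B}_{t}(\dot{\Delta}_{j'+i}F,\dot{\Delta}_{j'}G)$. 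The structural remark that makes everything work is that the symbol $e^{\sqrt{c_{0}t}(|\xi+\eta|_{1}-|\xi|_{1}-|\eta|_{1})}$ depends only on the output frequency $\xi+\eta$, hence does not enlarge Fourier supports: $\mathcal{B}_{t}(\dot{S}_{j'-1}F,\dot{\Delta}_{j'}G)$ is spectrally supported in an annulus of size $2^{j'}$ and $\mathcal{B}_{t}(\dot{\Delta}_{j'+i}F,\dot{\Delta}_{j'}G)$ in a ball of radius $\lesssim 2^{j'}$, exactly as for ordinary products. So the three pieces can be reassembled in $\dot{B}^{s_{1}+s_{2}-d/p}_{p,r}$ by Proposition \ref{prop2.1} (for the paraproducts) and by its ball-supported counterpart, valid for positive target regularity (for the remainder), and it remains to bound each building block in $L^{p}$.

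Here the one new ingredient is that Lemma \ref{lem5.3} provides $\|\mathcal{B}_{t}(u,v)\|_{L^{p}}\lesssim\|u\|_{L^{p_{1}}}\|v\|_{L^{p_{2}}}$ only for $1<p,p_{1},p_{2}<\infty$, that is, without the endpoint exponents $1$ and $\infty$ on which the proof of Proposition \ref{prop3.2} leans. So, for the paraproduct and $s_{1}<d/p$, I would avoid the usual bound involving $\|\dot{S}_{j'-1}F\|_{L^{\infty}}$ by expanding $\dot{S}_{j'-1}F=\sum_{k\leq j'-2}\dot{\Delta}_{k}F$ and, for auxiliary exponents $a,b\in(1,\infty)$ with $1/a+1/b=1/p$, using Lemma \ref{lem5.3} together with Bernstein's inequality to get
\begin{equation*}
\|\mathcal{B}_{t}(\dot{\Delta}_{k}F,\dot{\Delta}_{j'}G)\|_{L^{p}}\lesssim\|\dot{\Delta}_{k}F\|_{L^{a}}\|\dot{\Delta}_{j'}G\|_{L^{b}}\lesssim 2^{kd(1/p-1/a)+j'd(1/p-1/b)}\|\dot{\Delta}_{k}F\|_{L^{p}}\|\dot{\Delta}_{j'}G\|_{L^{p}}.
\end{equation*}
Taking $1/a$ small (possible since $s_{1}<d/p$) turns the sum over $k\leq j'-2$ into a convergent geometric series and reproduces exactly the classical bound $\|\mathcal{B}_{t}(\dot{S}_{j'-1}F,\dot{\Delta}_{j'}G)\|_{L^{p}}\lesssim 2^{j'(d/p-s_{1})}\|F\|_{\dot{B}^{s_{1}}_{p,1}}\|\dot{\Delta}_{j'}G\|_{L^{p}}$; the remaining dyadic bookkeeping (multiply by $2^{j(s_{1}+s_{2}-d/p)}$, use near-orthogonality in $j'$, conclude by discrete Young) is then identical to Proposition \ref{prop3.2}. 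The term $\mathcal{T}^{t}_{G}F$ is symmetric, with $s_{1},s_{2}$ and the summability indices $1,\infty$ interchanged.

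For the remainder $\mathcal{R}^{t}(F,G)$ I would instead apply Lemma \ref{lem5.3} with exponents $a,b,\rho\in(1,\infty)$, $1/\rho=1/a+1/b$, $\rho<p$, and then one more Bernstein step to descend from $\dot{B}^{s_{1}+s_{2}}_{\rho,1}$ into $\dot{B}^{s_{1}+s_{2}-d/p}_{p,1}$; optimizing over the admissible $(a,b,\rho)$ one finds that the resulting dyadic sum converges precisely under $s_{1}+s_{2}>d\max\{0,2/p-1\}$, the very condition of the non-Gevrey case. The second estimate of the proposition is obtained by running this scheme with $G\in\dot{B}^{s_{2}}_{p,\infty}$, replacing the $\ell^{1}$ summations over $j'$ by $\ell^{\infty}$ ones and reassembling in $\dot{B}^{s_{1}+s_{2}-d/p}_{p,\infty}$; the strict inequality $s_{2}<d/p$ is exactly what keeps the relevant geometric series convergent at the top frequency. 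Since the constant of Lemma \ref{lem5.3} is independent of $t\geq0$, all the resulting bounds are uniform in time, as asserted.

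The main---essentially the only---obstacle relative to Proposition \ref{prop3.2} is precisely this confinement to interior Lebesgue exponents in Lemma \ref{lem5.3}: the endpoints $L^{\infty}$ (in the paraproducts) and $L^{p/2}$ for $p\leq2$ or $L^{1}$ (in the remainder) used freely there are now forbidden, so every step must be recast so that only exponents in $(1,\infty)$ occur, the loss being absorbed by Bernstein factors whose harmlessness must be checked under the stated hypotheses. The genuinely delicate point inside this is the borderline regularity $s_{i}=d/p$: there one cannot place $\dot{S}_{j'-1}F$ (nor, dually, the low factor) in any finite $L^{a}$, since a homogeneous $\dot{B}^{d/p}_{p,1}$ function embeds only into $L^{\infty}$, exactly the exponent Lemma \ref{lem5.3} excludes, and this case requires the extra care taken in \cite{CDX}. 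All the remaining structure---Bony's decomposition, spectral localization, reassembly through Proposition \ref{prop2.1}, discrete Young---carries over unchanged.
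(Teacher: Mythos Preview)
Your approach is correct and is exactly the one the paper indicates (Bony decomposition combined with Lemma~\ref{lem5.3}, with details deferred to \cite{CDX}); the paper's own proof of the closely related Lemma~\ref{lem key G} follows precisely the pattern you describe. One small correction: the bilinear symbol $e^{\sqrt{c_{0}t}(|\xi+\eta|_{1}-|\xi|_{1}-|\eta|_{1})}$ does \emph{not} depend only on $\xi+\eta$; the correct reason Fourier supports behave as for ordinary products is simply that $\mathcal{B}_{t}(u,v)=e^{\sqrt{c_{0}t}\Lambda_{1}}\bigl[(e^{-\sqrt{c_{0}t}\Lambda_{1}}u)(e^{-\sqrt{c_{0}t}\Lambda_{1}}v)\bigr]$, a Fourier multiplier applied to an ordinary product, and each of $e^{\pm\sqrt{c_{0}t}\Lambda_{1}}$ preserves Fourier support.
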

Finally, we state the Gevrey estimate for composition to end this section.
\begin{prop}\label{prop5.2}
Let $F$ be a real analytic function in a neighborhood of 0, such that
$F(0) = 0$. Let $1< p<\infty$, $1\leq r\leq\infty$ and $-d\mathrm{min}(\frac{1}{p},\frac{1}{p'})<s<\frac{d}{p}$ with $\frac{1}{p'}=1-\frac{1}{p}$, then there exist two constants $R_{0}$ and $D$ depending only on $d,p$ and $F$ such that if for some $T>0$,
$$\|e^{\sqrt {c_{0}t}\Lambda_1}z\|_{\tilde{L}^{\infty}(\dot{B}^{\frac{d}{p}}_{p,1})}\lesssim R_{0}$$
then
$$\|e^{\sqrt {c_{0}t}\Lambda_1}F(z)\|_{\tilde{L}^{\theta}_{T}(\dot{B}^{s}_{p,r})}\leq D\|e^{\sqrt {c_{0}t}\Lambda_1}z\|_{\tilde{L}^{\theta}_{T}(\dot{B}^{s}_{p,r})}$$
for $1\leq \theta\leq\infty$. Moreover, the case $s=\frac{d}{p}$ holds true for $r=1$.
\end{prop}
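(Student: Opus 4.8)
The plan is to reduce $F(z)$ to its Taylor expansion, estimate each monomial $e^{\sqrt{c_0t}\Lambda_1}(z^k)$ by iterating the Gevrey product estimate of Proposition \ref{prop5.1}, and then sum the resulting geometric series, exploiting the smallness of $R_0$.

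First I would record the analytic data. Since $F$ is real analytic near $0$ with $F(0)=0$, there exist $\rho_0>0$ and $C_F>0$ depending only on $F$ such that $F(y)=\sum_{k\ge 1}a_ky^k$ for $|y|<\rho_0$, with $|a_k|\le C_F\rho_0^{-k}$. The symbol $e^{-\sqrt{c_0t}|\xi|_1}=\prod_{i=1}^d e^{-\sqrt{c_0t}|\xi_i|}$ is the Fourier transform of a nonnegative $L^1$ kernel of unit mass (a tensor product of one-dimensional Poisson kernels), so convolution with it is an $L^p$-contraction commuting with $\dot{\Delta}_{j}$; hence $\|z(t)\|_{\dot{B}^{d/p}_{p,1}}\le\|e^{\sqrt{c_0t}\Lambda_1}z(t)\|_{\dot{B}^{d/p}_{p,1}}$, and with $\dot{B}^{d/p}_{p,1}\hookrightarrow L^\infty$ and the hypothesis this yields $\|z(t)\|_{L^\infty}\le CR_0$ for a.e.\ $t$. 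Choosing $R_0$ small (in terms of $d,p,F$) makes the expansion $F(z)=\sum_{k\ge1}a_kz^k$ valid pointwise a.e.

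Next I would prove, by induction on $k$, that for every $k\ge1$
\[
\bigl\|e^{\sqrt{c_0t}\Lambda_1}(z^k)\bigr\|_{\tilde{L}^\theta_T(\dot{B}^s_{p,r})}\le C_0^{\,k-1}\,\bigl\|e^{\sqrt{c_0t}\Lambda_1}z\bigr\|_{\tilde{L}^\infty(\dot{B}^{d/p}_{p,1})}^{\,k-1}\,\bigl\|e^{\sqrt{c_0t}\Lambda_1}z\bigr\|_{\tilde{L}^\theta_T(\dot{B}^s_{p,r})},
\]
with $C_0$ depending only on $d,p$ (and bounded uniformly for $s$ in compact subsets of the admissible interval). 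The case $k=1$ is trivial; for the inductive step, write $z^k=z\cdot z^{k-1}$ and apply the special case $s_1=d/p$, $s_2=s$ of Proposition \ref{prop5.1}. Its hypotheses $s_1,s_2\le d/p$ and $s_1+s_2>d\max\{0,\tfrac2p-1\}$ reduce precisely to $-d\min(\tfrac1p,\tfrac1{p'})<s\le\tfrac dp$, so it applies: the $r=1$ form handles $s\le d/p$ (the endpoint $s=d/p$ using that $\dot{B}^{d/p}_{p,1}$ is an algebra), the $r=\infty$ form handles $s<d/p$, and the remaining $r\in(1,\infty)$ together with the Chemin--Lerner refinement — putting the $\tilde{L}^\infty$ norm on the factor $z$ carrying the $\dot{B}^{d/p}_{p,1}$ index and the $\tilde{L}^\theta$ norm on $z^{k-1}$, and applying H\"older in $t$ — follow from Bony's decomposition and Lemma \ref{lem5.3} along the same lines as Proposition \ref{prop5.1}. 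This gives $\|e^{\sqrt{c_0t}\Lambda_1}(z^k)\|_{\tilde{L}^\theta_T(\dot{B}^s_{p,r})}\le C_0\|e^{\sqrt{c_0t}\Lambda_1}z\|_{\tilde{L}^\infty(\dot{B}^{d/p}_{p,1})}\|e^{\sqrt{c_0t}\Lambda_1}(z^{k-1})\|_{\tilde{L}^\theta_T(\dot{B}^s_{p,r})}$, and the induction hypothesis closes it. Summing over $k$, using $|a_k|\le C_F\rho_0^{-k}$, $\|e^{\sqrt{c_0t}\Lambda_1}z\|_{\tilde{L}^\infty(\dot{B}^{d/p}_{p,1})}\le R_0$ and completeness of $\dot{B}^s_{p,r}$ (valid since $r=1$ at the endpoint $s=d/p$), I would obtain
\[
\bigl\|e^{\sqrt{c_0t}\Lambda_1}F(z)\bigr\|_{\tilde{L}^\theta_T(\dot{B}^s_{p,r})}\le\frac{C_F}{\rho_0}\sum_{k\ge1}\Bigl(\frac{C_0R_0}{\rho_0}\Bigr)^{k-1}\bigl\|e^{\sqrt{c_0t}\Lambda_1}z\bigr\|_{\tilde{L}^\theta_T(\dot{B}^s_{p,r})};
\]
shrinking $R_0$ so that $C_0R_0\le\rho_0/2$ bounds the sum by $2$ and gives the claim with $D=2C_F/\rho_0$, depending only on $d,p,F$.

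The step I expect to be the main obstacle is controlling the constant $C_0$ in the monomial bound: it must be independent of $k$ — this is exactly what makes the Taylor series summable — and, if one insists that $R_0$ and $D$ not depend on $s,r,\theta$, uniform for $s$ in the stated range. The $k$-independence is automatic, since the identical product estimate is invoked at every induction step; the uniformity in $s$ requires tracking the constants in the paraproduct and remainder estimates, which deteriorate only as $s$ approaches the excluded lower endpoint $-d\min(\tfrac1p,\tfrac1{p'})$, and hence stay bounded on compact subsets. The only other point needing care, the passage to the Chemin--Lerner norms in the Gevrey product estimate, is routine given Lemma \ref{lem5.3} and the block-by-block structure of the proof of Proposition \ref{prop5.1}.
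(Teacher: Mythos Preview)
Your proposal is correct and follows the same strategy the paper relies on. The paper does not prove Proposition~\ref{prop5.2} directly but cites \cite{CDX}; however, its proof of the companion result Proposition~\ref{prop4.1} (the case $s>d/p$) makes the underlying method visible: expand $F(z)=\sum a_n z^n$ via analyticity, control each monomial by iterating the Gevrey product estimate of Proposition~\ref{prop5.1}, and sum the resulting series using the smallness of $\|e^{\sqrt{c_0t}\Lambda_1}z\|_{\tilde L^\infty(\dot B^{d/p}_{p,1})}$ relative to the convergence radius. Your induction with $s_1=d/p$, $s_2=s$ in Proposition~\ref{prop5.1} is exactly this, and your verification that the hypothesis $s_1+s_2>d\max\{0,2/p-1\}$ reduces to $s>-d\min(1/p,1/p')$ is correct. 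The only difference is cosmetic: the paper packages the summed series as $\mathcal F(C\|\cdot\|)=\sum n|a_n|(C\|\cdot\|)^{n-1}$ rather than your explicit geometric bound, but the content is identical.
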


Let us end this section with a variant of the previous result which was firstly proved in \cite{CDX}:
\begin{prop}\label{prop5.3}
Let $F$ be a real analytic function in a neighborhood of 0, such that
$F(0) = 0$. Let $1< p<\infty$, $1\leq r\leq\infty$ and $-dmin(\frac{1}{p},\frac{1}{p'})<s\leq\frac{d}{p}$ with $\frac{1}{p'}=1-\frac{1}{p}$, there exist two constants $R_{0}$ and $\bar{D}$ depending only on $d,p$ and $F$ such that if for some $T>0$,
$$\max_{i=1,2}\|e^{\sqrt {c_{0}t}\Lambda_1}z_{i}\|_{\tilde{L}_{T}^{\infty}(\dot{B}^{\frac{d}{p}}_{p,1})}\lesssim R_{0}$$
then
$$\|e^{\sqrt {c_{0}t}\Lambda_1}\big(F(z_{1})-F(z_{2})\big)\|_{\tilde{L}^{\theta}_{T}(\dot{B}^{s}_{p,r})}\leq \bar{D}\|e^{\sqrt {c_{0}t}\Lambda_1}(z_{1}-z_{2})\|_{\tilde{L}^{\theta}_{T}(\dot{B}^{s}_{p,r})}$$
for $1\leq \theta\leq\infty$.  Moreover, the case $s=\frac{d}{p}$ holds true for $r=1$.
\end{prop}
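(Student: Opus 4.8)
The plan is to reduce Proposition~\ref{prop5.3} to the product estimate of Proposition~\ref{prop5.1} and the composition estimate of Proposition~\ref{prop5.2}, exactly as one reduces Proposition~\ref{prop2.26} to its non-Gevrey counterparts, by means of a first-order Taylor expansion with integral remainder. Setting $z_\sigma:=(1-\sigma)z_2+\sigma z_1$ for $\sigma\in[0,1]$, the fundamental theorem of calculus gives
\[
F(z_1)-F(z_2)=(z_1-z_2)\int_0^1 F'(z_\sigma)\,d\sigma=F'(0)\,(z_1-z_2)+(z_1-z_2)\int_0^1 G(z_\sigma)\,d\sigma,
\]
where $G:=F'-F'(0)$ is again real analytic in a neighborhood of $0$ and, crucially, satisfies $G(0)=0$, so that both Propositions~\ref{prop5.1} and~\ref{prop5.2} are applicable to $G$. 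The linear term is immediate, since $\|e^{\sqrt{c_0t}\Lambda_1}(F'(0)(z_1-z_2))\|_{\tilde{L}^{\theta}_{T}(\dot{B}^{s}_{p,r})}=|F'(0)|\,\|e^{\sqrt{c_0t}\Lambda_1}(z_1-z_2)\|_{\tilde{L}^{\theta}_{T}(\dot{B}^{s}_{p,r})}$; peeling off $F'(0)$ at this stage is necessary because homogeneous Besov spaces do not see constants, whereas $G(0)=0$ is required to invoke the composition estimates.

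For the remaining term the only genuine obstruction is that $e^{\sqrt{c_0t}\Lambda_1}$ does not commute with products, so I would insert the bilinear operators $\mathcal{B}_{t}$: by linearity in $\sigma$,
\[
e^{\sqrt{c_0t}\Lambda_1}\Big((z_1-z_2)\int_0^1 G(z_\sigma)\,d\sigma\Big)=\int_0^1 \mathcal{B}_{t}\big(e^{\sqrt{c_0t}\Lambda_1}(z_1-z_2),\,e^{\sqrt{c_0t}\Lambda_1}G(z_\sigma)\big)\,d\sigma.
\]
Then I would run the Bony-decomposition argument behind Proposition~\ref{prop5.1}, with Lemma~\ref{lem5.3} and H\"older's inequality in time, now placing the factor $e^{\sqrt{c_0t}\Lambda_1}G(z_\sigma)$ in the multiplier space $\tilde{L}^{\infty}_{T}(\dot{B}^{\frac{d}{p}}_{p,1})$, so that the factor $e^{\sqrt{c_0t}\Lambda_1}(z_1-z_2)$ and the product both remain in $\tilde{L}^{\theta}_{T}(\dot{B}^{s}_{p,r})$ over the whole range $-d\min(\frac1p,\frac1{p'})<s\le\frac dp$, with $r=1$ forced at the endpoint $s=\frac dp$; this is the Gevrey analogue of the classical product law $\dot{B}^{\frac dp}_{p,1}\cdot\dot{B}^{s}_{p,r}\hookrightarrow\dot{B}^{s}_{p,r}$. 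This step reduces everything to a uniform-in-$\sigma$ control of $\|e^{\sqrt{c_0t}\Lambda_1}G(z_\sigma)\|_{\tilde{L}^{\infty}_{T}(\dot{B}^{\frac{d}{p}}_{p,1})}$.

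That bound is precisely Proposition~\ref{prop5.2} applied to $G$ with $s=\frac dp$, $r=1$, $\theta=\infty$: by convexity of $\sigma\mapsto z_\sigma$,
\[
\|e^{\sqrt{c_0t}\Lambda_1}z_\sigma\|_{\tilde{L}^{\infty}_{T}(\dot{B}^{\frac{d}{p}}_{p,1})}\le(1-\sigma)\|e^{\sqrt{c_0t}\Lambda_1}z_2\|_{\tilde{L}^{\infty}_{T}(\dot{B}^{\frac{d}{p}}_{p,1})}+\sigma\|e^{\sqrt{c_0t}\Lambda_1}z_1\|_{\tilde{L}^{\infty}_{T}(\dot{B}^{\frac{d}{p}}_{p,1})}\lesssim R_0
\]
for every $\sigma\in[0,1]$, whence $\|e^{\sqrt{c_0t}\Lambda_1}G(z_\sigma)\|_{\tilde{L}^{\infty}_{T}(\dot{B}^{\frac{d}{p}}_{p,1})}\le D\,\|e^{\sqrt{c_0t}\Lambda_1}z_\sigma\|_{\tilde{L}^{\infty}_{T}(\dot{B}^{\frac{d}{p}}_{p,1})}\lesssim D R_0$, provided our $R_0$ is first shrunk so as to meet the smallness threshold of Proposition~\ref{prop5.2} attached to the function $G$ (a threshold depending only on $d,p,F$). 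Integrating over $\sigma\in[0,1]$ and collecting all constants then yields the claim, with $\bar D$ depending only on $d,p$ and $F$; the endpoint $s=\frac dp$, $r=1$ is handled at every step by the corresponding endpoint clauses of Propositions~\ref{prop5.1} and~\ref{prop5.2}.

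The real difficulty is confined to the non-locality of the Gevrey weight $e^{\sqrt{c_0t}\Lambda_1}$ under products and compositions, and this is absorbed, exactly as in \cite{CDX}, by the bilinear operators $\mathcal{B}_{t}$ together with the uniform-in-$t$ $L^p$-bounds of Lemma~\ref{lem5.3}. Once Propositions~\ref{prop5.1} and~\ref{prop5.2} are in hand, the remaining work is bookkeeping: splitting off $F'(0)$ so that $G(0)=0$, checking the admissible index constraints (including the forced $r=1$ at $s=\frac dp$), verifying that all bounds on $G(z_\sigma)$ are uniform in $\sigma\in[0,1]$, and transferring the static product law to the Chemin--Lerner setting through H\"older's inequality in the time variable.
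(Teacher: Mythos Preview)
The paper does not actually include a proof of Proposition~\ref{prop5.3}; it merely records the statement and attributes the result to \cite{CDX}. Your argument---the Taylor formula $F(z_1)-F(z_2)=(z_1-z_2)\int_0^1F'(z_\sigma)\,d\sigma$, splitting off $F'(0)$ so that $G=F'-F'(0)$ vanishes at the origin, then combining the Gevrey product law (via $\mathcal{B}_t$ and Lemma~\ref{lem5.3}) with Proposition~\ref{prop5.2} applied to $G$ at $s=d/p$, $r=1$---is correct and is exactly the standard route for deriving such difference estimates from the corresponding product and composition estimates; it is the natural Gevrey analogue of the proof of Proposition~\ref{prop2.26} and is almost certainly what \cite{CDX} does. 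The only mild extension you invoke beyond the stated Proposition~\ref{prop5.1} is the Gevrey version of the product law $\dot B^{d/p}_{p,1}\cdot\dot B^{s}_{p,r}\hookrightarrow\dot B^{s}_{p,r}$ for general $r$, but as you note this follows from the same Bony decomposition plus Lemma~\ref{lem5.3}, so there is no gap.
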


\section{The global well-posedness and Gevrey analyticity}\setcounter{equation}{0}
In this section, our central task is to show the proof of Theorem \ref{thm2.1}. It is observed that the linearized system of \eqref{linearized} is \textit{purely} parabolic in both high frequencies and low frequencies when $\bar{\nu}^2\geq4\bar{\kappa}$. We shall develop the $L^p$ energy argument
in terms of the effective velocity \eqref{effective}, which leads to the global well-posedness and Gevrey analyticity of (\ref{linearized})-(\ref{composite1}).

\subsection{Global well-posedness}
First of all, let us give a priori estimate for the solutions to (\ref{linearized})-(\ref{composite1}).
\begin{prop}\label{propp}
Suppose that $\bar{\nu}^2\geq4\bar{\kappa}$. Let $1 \leq q\leq p\leq 2q$, $p<d$ and $$\frac{1}{q}<\frac{1}{p}+\frac{2}{d}.$$
Suppose that $(a,m)$ is the solution of system (\ref{linearized})-(\ref{composite1}). There exists a constant $R>0$ that if
$$\|a\|_{\tilde{L}^{\infty}_{T}(\dot{B}^{\frac{d}{p}}_{p,1})}\leq R,$$
then
\begin{equation}\label{R-E3.1}
\|(a,m)\|_{E^{p,q}_{T}}\lesssim \|(\nabla a_{0},
m_{0})\|_{\dot{B}^{\frac{d}{p}-1,\frac{d}{q}-3}_{(p,1),(q,\infty)}}+C_{R}\big(1+\|(a,m)\|_{E^{p,q}_{T}}\big)\|(a,m)\|^{2}_{E^{p,q}_{T}}\end{equation}
 holds for any $T>0$, where the constant $C_{R}>0$ is dependent of $R$.
\end{prop}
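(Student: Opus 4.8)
The plan is to establish \eqref{R-E3.1} by splitting the system \eqref{linearized} into its purely parabolic pieces and applying the maximal regularity estimate of Proposition \ref{prop2.2} separately in the high- and low-frequency regimes, keeping track of the hybrid norm $E^{p,q}_T$. \textbf{First}, I would introduce the effective velocity $w\triangleq\cQ m+\alpha\nabla a$ from \eqref{effective} with $\alpha=\frac12(\bar\nu\pm\sqrt{\bar\nu^2-4\bar\kappa})$; the point of this change of unknown is that, because $\bar\nu^2\geq4\bar\kappa$, the coupled $(a,\cQ m)$-block decouples (up to lower-order terms) into \emph{two} heat equations: one for $w$ with viscosity $\beta=\bar\nu-\alpha$ (the ``other'' root), and, using $\d_t a+\div\cQ m=0$ together with $\cQ m=w-\alpha\nabla a$, one scalar parabolic equation for $a$ of the form $\d_t a-\alpha\Delta a=-\div w$. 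The incompressible part $\cP m$ already solves a heat equation with viscosity $\bar\mu$ by \eqref{spectrum}. Thus $(\nabla a, w, \cP m)$ solves three heat systems with source terms that are either the nonlinearities $g=\sum g_i$ of \eqref{nonlinear} (after applying $\cP$, $\cQ$, or $\nabla$ and commuting derivatives) or the benign coupling term $-\div w$ in the $a$-equation; the latter is absorbed on the left by the parabolic gain of two derivatives.

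\textbf{Second}, I would apply Proposition \ref{prop2.2} to each of these three equations. In the \emph{high-frequency} regime I work in $\dot B^{d/p-1}_{p,1}$ for $(\nabla a, m)$ at the $\tilde L^\infty_T$ level and $\dot B^{d/p+1}_{p,1}$ at the $\tilde L^1_T$ level, which is exactly what the $h$-part of $\|\cdot\|_{E^{p,q}_T}$ records, taking $\rho=\infty$ and $\rho=1$, $\rho_1=1$ in Proposition \ref{prop2.2}. In the \emph{low-frequency} regime I work in the chain $\dot B^{d/q-3}_{q,\infty}\to\dot B^{d/q-1}_{q,\infty}$, again using Proposition \ref{prop2.2} with $\rho=\infty$ and $\rho=1$; the hybrid space $\dot B^{s,t}_{(p,1),(q,\infty)}$ is designed precisely so these two regimes glue. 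This reduces \eqref{R-E3.1} to controlling $\|g\|$ in $\tilde L^1_T(\dot B^{d/p-1,d/q-3}_{(p,1),(q,\infty)})$ (the source space two derivatives below the solution space) by $C_R(1+\|(a,m)\|_{E^{p,q}_T})\|(a,m)\|^2_{E^{p,q}_T}$.

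\textbf{Third}, and this is where the real work lies, I would estimate each $g_i$ in $\tilde L^1_T(\dot B^{d/p-1,d/q-3}_{(p,1),(q,\infty)})$ term by term. The quadratic terms $g_1,g_2,g_3,g_6$ are, schematically, products of two factors each controlled by $\|(a,m)\|_{E^{p,q}_T}$ (after distributing the derivatives and using that $Q(a),\tilde\mu(a),\ldots$ vanish at $0$, so composition estimates Proposition \ref{prop2.25}/Proposition \ref{prop2.26} turn $Q(a)$ etc. into objects bounded by $\|a\|$ in the relevant space, under the smallness hypothesis $\|a\|_{\tilde L^\infty_T(\dot B^{d/p}_{p,1})}\leq R$); the cubic-looking structure of $g_1$ (the $Q(a)m\otimes m$ piece) produces the extra factor $\|(a,m)\|_{E^{p,q}_T}$ responsible for the $(1+\|(a,m)\|_{E^{p,q}_T})$. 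The term $g_4=\nabla(aG(a))$ with $G(a)=P'(\rho^*+\theta\rho^* a)$ is crucially quadratic because $P'(\rho^*)=0$ by $(H_1)$, so $G(a)$ itself vanishes at $a=0$ and $aG(a)=O(a^2)$. The genuinely delicate point is that the high/low split of a product is \emph{not} simply the split of the factors: in the low-frequency output one must estimate low$\times$low, low$\times$high and high$\times$high interactions, and the last of these forces an $L^q$-$L^p$-type product estimate with $1\leq q\leq p$, together with the Bernstein embedding $\dot B^{d/p}_{p,1}\hookrightarrow\dot B^{d/q}_{q,1}$ and its relatives; this is exactly where the constraints $q\leq p\leq 2q$, $p<d$ and $\frac1q<\frac1p+\frac2d$ are consumed, and it is why the paper must develop the new product estimates (Proposition \ref{cor1.01} and Corollary \ref{cor1.1} in the references). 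I expect the bookkeeping of which regularity index each of the two factors must live in — checking that every required index is nonnegative (so Proposition \ref{prop3.2} applies) and does not exceed $d/p$ or $d/q$ — to be the main obstacle, rather than any single estimate; once all six $g_i$ are bounded, summing the three maximal-regularity inequalities and collecting terms yields \eqref{R-E3.1}.
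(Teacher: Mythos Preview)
Your proposal is correct and follows essentially the same approach as the paper: the effective-velocity decoupling into heat equations, the separate high/low-frequency application of maximal regularity (Proposition \ref{prop2.2}), and the term-by-term product/composition estimates on $g_1,\dots,g_6$ using Proposition \ref{cor1.01}/Corollary \ref{cor1.1} in the low-frequency regime all match the paper's argument. The only cosmetic difference is that you carry $a$ as the second parabolic unknown via $\partial_t a-\alpha\Delta a=-\div w$, whereas the paper instead writes a heat equation for $\cQ m$ (namely $\partial_t\cQ m-\alpha\Delta\cQ m=\frac{\bar\kappa}{\alpha}\Delta w+\cQ g$) and recovers $\nabla a=(w-\cQ m)/\alpha$; these are equivalent and yield identical estimates.
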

\begin{proof}
\noindent \textit{Step 1: The high frequency analysis}

By using the Leray Projector $\mathcal{P}\triangleq \mathrm{Id}-\nabla(-\Delta)^{-1}\mathrm{div}$, one can write $m=\mathcal{P}m+\cQ m$, which is the sum of
the incompressible part $\mathcal{P}m$ and the compressible part $\cQ m$. From
\eqref{spectrum}, we see that $\mathcal{P}m$ fulfills a mere heat equation:
\begin{equation}\label{incompressible}\partial_{t}\mathcal{P}m-\bar{\mu}\Delta \mathcal{P}m =\mathcal{P}g.\end{equation}
Hence, it follows from Proposition \ref{prop2.2} that
\begin{equation}\label{4.1}
\|\mathcal{P}m\|^{h}_{\tilde{L}_{T}^{\infty}(\dot{B}^{\frac{d}{p}-1}_{p,1})}+\|\mathcal{P}m\|^{h}_{\tilde{L}_{T}^{1}(\dot{B}^{\frac{d}{p}+1}_{p,1})}\lesssim
\|\mathcal{P}m_{0}\|^{h}_{\dot{B}^{\frac{d}{p}-1}_{p,1}}+
\|\mathcal{P}g\|^{h}_{\tilde{L}_{T}^{1}(\dot{B}^{\frac{d}{p}-1}_{p,1})}.
\end{equation}
In order to handle the coupling between $a$ and $\cQ m,$ inspired by \cite{H2}, we introduce the effective velocity
$$w\triangleq \cQ m+\alpha \nabla a$$
where $\alpha$ is to be confirmed. Note that $\nabla\div m=\Delta\cQ m$, it follows from (1.4) that
$$\left\{\begin{array}{l}\d_t\nabla a+ \Delta\cQ m=0,\\[1ex]
\d_t\cQ m-\bar{\nu}\Delta\cQ m-\bar{\kappa}\Delta\nabla  a=\cQ g.
\end{array}\right.$$
Consequently, we arrive at
\begin{equation}
\d_tw-(\bar{\nu}-\alpha)\Delta w=\cQ g,\label{4.2}
\end{equation}
where $\alpha$ satisfies $\alpha=\frac{\bar{\kappa}}{\bar{\nu}-\alpha}$. Solving the equality implies that $\alpha=\frac{1}{2}(\bar{\nu}\pm\sqrt{\bar{\nu}^2-4\bar{\kappa}}),$
furthermore, that $\bar{\nu}-\alpha=\frac{1}{2}(\bar{\nu}\mp\sqrt{\bar{\nu}^2-4\bar{\kappa}})>0.$ Hence, employing Proposition \ref{prop2.2}  to get
\begin{equation}
\|w\|^{h}_{\tilde{L}^{\infty}_{T}(\dot{B}^{\frac{d}{p}-1}_{p,1})}+
\|w\|^{h}_{\tilde{L}^{1}_{T}(\dot{B}^{\frac{d}{p}+1}_{p,1})}\lesssim  \|w_0\|^{h}_{\dot{B}^{\frac{d}{p}-1}_{p,1}}+\|\cQ
g\|^{h}_{\tilde{L}^{1}_{T}(\dot{B}^{\frac{d}{p}-1}_{p,1})}.\label{4.3}
\end{equation}

On the other hand, bounding estimate $\cQ m$, we use the fact
\begin{equation} \nabla a=\frac{w-\cQ m}{\alpha},\label{4.4}\end{equation}
so that the equation for $\cQ m$ can be rewritten as
\begin{equation}
\d_t\cQ m-\alpha \Delta \cQ m=\frac{\bar{\kappa}}{\alpha}\Delta w+\cQ g.\label{4.5}
\end{equation}
Hence, one can again take advantage of (\ref{4.3}) and Proposition \ref{prop2.2} to arrive at
\begin{multline}
\|\cQ m\|^{h}_{\tilde{L}^{\infty}_{T}(\dot{B}^{\frac{d}{p}-1}_{p,1})}+
\|\cQ m\|^{h}_{\tilde{L}^{1}_{T}(\dot{B}^{\frac{d}{p}+1}_{p,1})}\lesssim  \|\cQ m_0\|^{h}_{\dot{B}^{\frac{d}{p}-1}_{p,1}}+\Big\|\frac{\kappa}{\alpha}\Delta w+\cQ
g\Big\|^{h}_{\tilde{L}^{1}_{T}(\dot{B}^{\frac{d}{p}-1}_{p,1})}\\ \lesssim\|\cQ m_0\|^{h}_{\dot{B}^{\frac{d}{p}-1}_{p,1}}+\|w_0\|^{h}_{\dot{B}^{\frac{d}{p}-1}_{p,1}}+\|\cQ
g\|^{h}_{\tilde{L}^{1}_{T}(\dot{B}^{\frac{d}{p}-1}_{p,1})}.\label{4.6}
\end{multline}
Notice that
\begin{multline}
\|\nabla a\|^{h}_{\tilde{L}^{\infty}_{T}(\dot{B}^{\frac{d}{p}-1}_{p,1})\cap\tilde{L}^{1}_{T}(\dot{B}^{\frac{d}{p}+1}_{p,1})}\lesssim  \|\frac{w-\cQ m}{\alpha}\|^{h}_{\tilde{L}^{\infty}_{T}(\dot{B}^{\frac{d}{p}-1}_{p,1})\cap\tilde{L}^{1}_{T}(\dot{B}^{\frac{d}{p}+1}_{p,1})}\\ \lesssim\|w\|^{h}_{\tilde{L}^{\infty}_{T}(\dot{B}^{\frac{d}{p}-1}_{p,1})\cap\tilde{L}^{1}_{T}(\dot{B}^{\frac{d}{p}+1}_{p,1})}+\|m\|^{h}_{\tilde{L}^{\infty}_{T}(\dot{B}^{\frac{d}{p}-1}_{p,1})\cap\tilde{L}^{1}_{T}(\dot{B}^{\frac{d}{p}+1}_{p,1})}.\label{4.6}
\end{multline}
Therefore it follows from (\ref{4.3}) and (\ref{4.6}) that
\begin{multline}\label{4.7}
\|(\nabla a, \cQ m)\|^{h}_{\tilde{L}^{\infty}_{T}(\dot{B}^{\frac{d}{p}-1}_{p,1})}+\|(\nabla a, \cQ m)\|^{h}_{\tilde{L}^{1}_{T}(\dot{B}^{\frac{d}{p}+1}_{p,1})}
\\ \lesssim \|(\nabla a_0, \cQ m_0)\|^{h}_{\dot{B}^{\frac{d}{p}-1}_{p,1}}+\|\cQ g\|^{h}_{\tilde{L}^{1}_{T}(\dot{B}^{\frac{d}{p}-1}_{p,1})}.
\end{multline}

\noindent \textit{Step 2: The low frequency analysis}

To control the evolution of norm $\|a\|_{L^{1}_{T}(\dot{B}^{d/p}_{p,1})}$ at the low frequencies, we need to establish the $\tilde{L}_{T}^{\infty}(\dot{B}^{d/q-3}_{q,\infty})(1\leq q\leq p)$ estimates of $(\nabla a,m)$. The effective velocity
still plays a key role in the low-frequency estimates. Again for the incompressible part $\mathcal{P}m$, one can restrict Proposition
\ref{prop2.2} to the low-frequency part and get
\begin{equation}\label{4.8}
\|\mathcal{P}m\|^{\ell}_{\tilde{L}_{T}^{\infty}(\dot{B}^{\frac{d}{q}-3}_{q,\infty})}+\|\mathcal{P}m\|^{\ell}_{\tilde{L}_{T}^{1}(\dot{B}^{\frac{d}{q}-1}_{q,\infty})}\lesssim
\|\mathcal{P}m_{0}\|^{\ell}_{\dot{B}^{\frac{d}{q}-3}_{q,\infty}}+
\|\mathcal{P}g\|^{\ell}_{\tilde{L}_{T}^{1}(\dot{B}^{\frac{d}{q}-3}_{q,\infty})}
\end{equation}
for $1\leq q\leq p$.

Regarding $a$ and $\mathcal{Q}m$,  by using the effective velocity $w=\cQ m+\alpha \nabla a$ with $\alpha=\frac{\bar{\kappa}}{\bar{\nu}-\alpha}$ as in high frequencies, one can obtain
\begin{equation}\label{4.10}
\|w\|^{\ell}_{\tilde{L}^{\infty}_{T}(\dot{B}^{\frac{d}{q}-3}_{q,\infty})}+
\|w\|^{\ell}_{\tilde{L}^{1}_{T}(\dot{B}^{\frac{d}{q}-1}_{q,\infty})}\lesssim  \|w_0\|^{\ell}_{\dot{B}^{\frac{d}{q}-3}_{q,\infty}}+\|\cQ
g\|^{\ell}_{\tilde{L}^{1}_{T}(\dot{B}^{\frac{d}{q}-3}_{q,\infty})}.
\end{equation}
It follows from (\ref{4.5}) that
\begin{eqnarray}\label{4.12}
&&\|\cQ m\|^{\ell}_{\tilde{L}^{\infty}_{T}(\dot{B}^{\frac{d}{q}-3}_{q,\infty})}+
\|\cQ m\|^{\ell}_{\tilde{L}^{1}_{T}(\dot{B}^{\frac{d}{q}-1}_{q,\infty})}\nonumber\\&\lesssim&\|\cQ
m_0\|^{\ell}_{\dot{B}^{\frac{d}{q}-3}_{q,\infty}}+\|w_0\|^{\ell}_{\dot{B}^{\frac{d}{q}-3}_{q,\infty}}+\|\cQ g\|^{\ell}_{\tilde{L}^{1}_{T}(\dot{B}^{\frac{d}{q}-3}_{q,\infty})}.
\end{eqnarray}
Consequently, by combining (\ref{4.10}) and (\ref{4.12}), we arrive at
\begin{eqnarray}\label{4.13}
\|(\nabla a,\mathcal{Q}m)\|^{\ell}_{\tilde{L}_{T}^{\infty}(\dot{B}^{\frac{d}{q}-3}_{q,\infty})\bigcap\tilde{L}_{T}^{1}(\dot{B}^{\frac{d}{q}-1}_{q,\infty})}
\lesssim\|(\nabla a_{0},\mathcal{Q}m_{0})\|^{\ell}_{\dot{B}^{\frac{d}{q}-3}_{q,\infty}}+
\|\mathcal{Q}g\|^{\ell}_{\tilde{L}_{T}^{1}(\dot{B}^{\frac{d}{q}-3}_{q,\infty})}.
\end{eqnarray}

\noindent \textit{Step 3: The nonlinear estimates}

Putting inequalities \eqref{4.1}, \eqref{4.7}-\eqref{4.8} and \eqref{4.13} together, we conclude that
\begin{eqnarray}\label{4.13c}
\|(a,m)\|_{E^{p,q}_{T}} \lesssim \|(\nabla a_{0}, m_{0})\|_{\dot{B}^{\frac{d}{p}-1,\frac{d}{q}-3}_{(p,1),(q,\infty)}} + \|g\|^{h}_{\tilde{L}_{T}^{1}(\dot{B}^{\frac{d}{p}-1}_{p,1})}+\|g\|^{\ell}_{\tilde{L}_{T}^{1}(\dot{B}^{\frac{d}{q}-3}_{q,\infty})}.
\end{eqnarray}

In what follows, let us handle those nonlinear terms in $g$. We start with the high-frequency estimates. For $g_{1}$, by Proposition \ref{prop3.2}, we have for $1\leq p< \infty$:
\begin{eqnarray*}
\|g_{1}\|^{h}_{\tilde{L}^{1}_{T}(\dot{B}^{\frac{d}{p}-1}_{p,1})}\lesssim\|\big(Q(a)-1\big)m\otimes m\|^{h}_{\tilde{L}^{1}_{T}(\dot{B}^{\frac{d}{p}}_{p,1})}
\lesssim(1+\|a\|_{\tilde{L}^{\infty}_{T}(\dot{B}^{\frac{d}{p}}_{p,1})})\|m\|^{2}_{\tilde{L}^{2}_{T}(\dot{B}^{\frac{d}{p}}_{p,1})}.
\end{eqnarray*}
Notice that the embedding $\dot{B}^{s+\frac{d}{q}-\frac{d}{p}}_{q,\infty}\hookrightarrow \dot{B}^{t}_{p,1}(q\leq p$ and $s<t)$ on low frequencies,
we deduct that
\begin{eqnarray*}
\|a\|_{\tilde{L}^{\infty}_{T}(\dot{B}^{\frac{d}{p}}_{p,1})}\lesssim\|a\|^{h}_{\tilde{L}^{\infty}_{T}(\dot{B}^{\frac{d}{p}}_{p,1})}
+\|a\|^{\ell}_{\tilde{L}^{\infty}_{T}(\dot{B}^{\frac{d}{q}-2}_{q,\infty})}\lesssim \|a\|_{E^{p,q}_{T}}.
\end{eqnarray*}
Thanks to the interpolation and embedding,
\begin{eqnarray*}
\|m\|_{\tilde{L}^{2}_{T}(\dot{B}^{\frac{d}{p}}_{p,1})}\lesssim \|m\|_{\tilde{L}^{\infty}_{T}(\dot{B}^{\frac{d}{p}-1}_{p,1})}^{\frac 12}\|m\|_{\tilde{L}^{1}_{T}(\dot{B}^{\frac{d}{p}+1}_{p,1})}^{\frac 12}\lesssim \|m\|_{E^{p,q}_{T}}.
\end{eqnarray*}
Consequently, we arrive at
\begin{eqnarray}\label{g1}
\|g_{1}\|^{h}_{\tilde{L}^{1}_{T}(\dot{B}^{\frac{d}{p}-1}_{p,1})}\lesssim(1+\|(a,m)\|_{E^{p,q}_{T}})\|(a,m)\|^{2}_{E^{p,q}_{T}}.
\end{eqnarray}
Similarly,
\begin{eqnarray}\label{glll}
\|g_{2}\|^{h}_{\tilde{L}^{1}_{T}(\dot{B}^{\frac{d}{p}-1}_{p,1})}&\lesssim& \|a\|_{\tilde{L}^{2}_{T}(\dot{B}^{\frac{d}{p}+1}_{p,1})}\|m\|_{\tilde{L}^{2}_{T}(\dot{B}^{\frac{d}{p}}_{p,1})}+
\|a\|_{\tilde{L}^{\infty}_{T}(\dot{B}^{\frac{d}{p}}_{p,1})}\|m\|_{\tilde{L}^{1}_{T}(\dot{B}^{\frac{d}{p}+1}_{p,1})}\\
\nonumber&\lesssim&\|(a,m)\|^{2}_{E^{p,q}_{T}}
\end{eqnarray}
and
\begin{multline}
\|g_{3}\|^{h}_{\tilde{L}^{1}_{T}(\dot{B}^{\frac{d}{p}-1}_{p,1})}\lesssim\|\tilde{\mu}(a)D((1-Q(a))m\big)\|^{h}_{\tilde{L}^{1}_{T}(\dot{B}^{\frac{d}{p}}_{p,1})}+
\|\tilde{\lambda}(a)\mathrm{div}\big((1-Q(a))m)\big)\|^{h}_{\tilde{L}^{1}_{T}(\dot{B}^{\frac{d}{p}}_{p,1})}\\
\lesssim\|a\|_{\tilde{L}^{\infty}_{T}(\dot{B}^{\frac{d}{p}}_{p,1})}\Big\{(1+\|a\|_{\tilde{L}^{2}_{T}(\dot{B}^{\frac{d}{p}+1}_{p,1})})\|m\|_{\tilde{L}^{2}_{T}(\dot{B}^{\frac{d}{p}}_{p,1})}
+(1+\|a\|_{\tilde{L}^{\infty}_{T}(\dot{B}^{\frac{d}{p}}_{p,1})})\|m\|_{\tilde{L}^{1}_{T}(\dot{B}^{\frac{d}{p}+1}_{p,1})})\Big\}\\
\lesssim(1+\|(a,m)\|_{E^{p,q}_{T}})\|(a,m)\|^{2}_{E^{p,q}_{T}}.
\end{multline}

Owing to the fact that $P'(\rho^{*})=0$, bounding the nonlinear pressure is more elaborate. Indeed,
\begin{equation}
\|g_{4}\|^{h}_{\tilde{L}^{1}_{T}(\dot{B}^{\frac{d}{p}-1}_{p,1})}\lesssim
\|a\|^{2}_{\tilde{L}^{2}_{T}(\dot{B}^{\frac{d}{p}}_{p,1})}.
\end{equation}
It follows from the interpolation and embedding that
\begin{equation*}
\|a\|_{\tilde{L}^{2}_{T}(\dot{B}^{\frac{d}{p}}_{p,1})}\leq \|a^{h}\|_{\tilde{L}^{2}_{T}(\dot{B}^{\frac{d}{p}}_{p,1})}+\|a^{\ell}\|_{\tilde{L}^{2}_{T}(\dot{B}^{\frac{d}{p}}_{p,1})},
\end{equation*}
Clearly, it holds
$$\|a^{h}\|_{\tilde{L}^{2}_{T}(\dot{B}^{\frac{d}{p}}_{p,1})}\lesssim \|a\|^{h}_{\tilde{L}^{\infty}_{T}(\dot{B}^{\frac{d}{p}}_{p,1})}+\|a\|^{h}_{\tilde{L}^{1}_{T}(\dot{B}^{\frac{d}{p}+2}_{p,1})}$$
and
$$\|a^{\ell}\|_{\tilde{L}^{2}_{T}(\dot{B}^{\frac{d}{p}}_{p,1})}\lesssim \|a^{\ell}\|_{\tilde{L}^{2}_{T}(\dot{B}^{\frac{d}{q}-1}_{q,1})}\lesssim
\|a\|^{\ell}_{\tilde{L}^{\infty}_{T}(\dot{B}^{\frac{d}{q}-2}_{q,\infty})}+\|a\|^{\ell}_{\tilde{L}^{1}_{T}(\dot{B}^{\frac{d}{q}}_{q,\infty})}.$$
Consequently, we are led to
\begin{equation*}
\|g_{4}\|^{h}_{\tilde{L}^{1}_{T}(\dot{B}^{\frac{d}{p}-1}_{p,1})}\lesssim\|(a,m)\|^{2}_{E^{p,q}_{T}}.
\end{equation*}
Regarding those capillary terms, it follows that
\begin{eqnarray}\label{gll}
\|g_{5}\|^{h}_{\tilde{L}^{1}_{T}(\dot{B}^{\frac{d}{p}-1}_{p,1})}\lesssim
\|a\|_{\tilde{L}^{\infty}_{T}(\dot{B}^{\frac{d}{p}}_{p,1})}{\|a\|_{\tilde{L}^{1}_{T}(\dot{B}^{\frac{d}{p}+2}_{p,1})}}\lesssim\|(a,m)\|^{2}_{E^{p,q}_{T}}
\end{eqnarray}
and
\begin{multline}\label{opq}
\|g_{6}\|^{h}_{\tilde{L}^{1}_{T}(\dot{B}^{\frac{d}{p}-1}_{p,1})}\lesssim\|\big(\tilde{\kappa}_{2}(a)+\check{\kappa}\big)|\nabla
a|^{2}\|^{h}_{\tilde{L}^{1}_{T}(\dot{B}^{\frac{d}{p}}_{p,1})}+
\|\big(\tilde{\kappa}_{3}(a)+\bar{\kappa}\big)\nabla a\otimes\nabla a\|^{h}_{\tilde{L}^{1}_{T}(\dot{B}^{\frac{d}{p}}_{p,1})}\\
\lesssim(1+\|a\|_{\tilde{L}^{\infty}_{T}(\dot{B}^{\frac{d}{p}}_{p,1})})\|a\|^{2}_{\tilde{L}^{2}_{T}(\dot{B}^{\frac{d}{p}+1}_{p,1})}
\lesssim(1+\|(a,m)\|_{E^{p,q}_{T}})\|(a,m)\|^{2}_{E^{p,q}_{T}}.
\end{multline}
Therefore, combing (\ref{g1})-(\ref{opq}), we deduce that
\begin{eqnarray}\label{h-f}
\|g\|^{h}_{\tilde{L}^{1}_{T}(\dot{B}^{\frac{d}{p}-1}_{p,1})}\lesssim\big(1+\|(a,m)\|_{E^{p,q}_{T}}\big)\|(a,m)\|^{2}_{E^{p,q}_{T}}.
\end{eqnarray}

Now what left is to deal with the norm $\|g\|^{\ell}_{\tilde{L}_{T}^{1}(\dot{B}^{\frac{d}{q}-3}_{q,\infty})}$. For that end, some product estimates need to be developed. Precisely,
\begin{prop}\label{cor1.01}
Let $1 \leq q\leq p\leq 2q$, $p<d$ and $$\frac{1}{q}<\frac{1}{p}+\frac{2}{d}.$$
Then it holds that
$$\|ab\|_{\dot{B}^{\frac{d}{q}-2}_{q,\infty}}\lesssim\|a\|_{\dot{B}^{\frac{d}{p}-2}_{p,\infty}}\|b\|_{\dot{B}^{\frac{d}{p}}_{p,\infty}}+
\|b\|_{\dot{B}^{\frac{d}{p}-2}_{p,\infty}}\|a\|_{\dot{B}^{\frac{d}{p}}_{p,\infty}}.$$
\end{prop}

Proposition \ref{cor1.01} can be directly deduced by Lemma \ref{lem key} if we take $s=\frac{d}{p}$, $k_{1}=2$. Precisely,
the condition $d\geq3$ and $p< d$ are equivalent to $$\frac dp> 2-d\min\Big(\frac 1p,\frac 1{p'}\Big).$$
Also, $q\leq p$ and $\frac{1}{q}<\frac{1}{p}+\frac{2}{d} \Leftrightarrow d\max(0,1/q-1/p)<2.$  A version in Chemin-Lerner spaces can be immediately reached,  whereas the time exponent fulfills H\"{o}lder inequality.
\begin{cor}\label{cor1.1}
Let $\rho,\rho_{1},\rho_{2},\bar{\rho}_{1},\bar{\rho}_{2}\in[1,\infty]$ with $\frac{1}{\rho}=\frac{1}{\rho_{1}}+\frac{1}{\rho_{2}}=\frac{1}{\bar{\rho}_{1}}+\frac{1}{\bar{\rho}_{2}}$. Let $1 \leq q\leq p\leq 2q$, $p<d$ and $$\frac{1}{q}<\frac{1}{p}+\frac{2}{d}.$$
Then it holds that
$$\|ab\|_{\tilde{L}^{\rho}_{T}(\dot{B}^{\frac{d}{q}-2}_{q,\infty})}\lesssim\|a\|_{\tilde{L}^{\rho_{1}}_{T}(\dot{B}^{\frac{d}{p}-2}_{p,\infty})}
\|b\|_{\tilde{L}^{\rho_{2}}_{T}(\dot{B}^{\frac{d}{p}}_{p,\infty})}+
\|b\|_{\tilde{L}^{\bar{\rho}_{1}}_{T}(\dot{B}^{\frac{d}{p}-2}_{p,\infty})}
\|a\|_{\tilde{L}^{\bar{\rho}_{2}}_{T}(\dot{B}^{\frac{d}{p}}_{p,\infty})}$$
for any $T>0$.
\end{cor}

Let us start to estimate $\|g\|^{\ell}_{\tilde{L}_{T}^{1}(\dot{B}^{\frac{d}{q}-3}_{q,\infty})}$. First of all, we focus on the pressure term $g_{4}$,
which is a lower-order term with respect to the scaling of Korteweg system. In order to get the control of density in low frequencies, the stronger
regularity assumption ($s=d/q-3$) is posted. It follows from Corollary \ref{cor1.1} that
\begin{multline*}
\|\nabla\big(aG(a)\big)\|^{\ell}_{\tilde{L}_{T}^{1}(\dot{B}^{\frac{d}{q}-3}_{q,\infty})}
\lesssim\|aG(a)\|^{\ell}_{\tilde{L}_{T}^{1}(\dot{B}^{\frac{d}{q}-2}_{q,\infty})}
\lesssim\|a\|_{\tilde{L}_{T}^{\infty}(\dot{B}^{\frac{d}{p}-2}_{p,\infty})}\|G(a)\|_{\tilde{L}_{T}^{1}(\dot{B}^{\frac{d}{p}}_{p,\infty})}\\
+\|G(a)\|_{\tilde{L}_{T}^{\infty}(\dot{B}^{\frac{d}{p}-2}_{p,\infty})}\|a\|_{\tilde{L}_{T}^{1}(\dot{B}^{\frac{d}{p}}_{p,\infty})}.
\end{multline*}
By Proposition \ref{prop2.25}, it holds that for $d\geq3$ and $1\leq p<d$,
\begin{eqnarray*}
\|G(a)\|_{\tilde{L}_{T}^{\infty}(\dot{B}^{\frac{d}{p}-2}_{p,\infty})}
\leq C_{R}\|a\|_{\tilde{L}_{T}^{\infty}(\dot{B}^{\frac{d}{p}-2}_{p,\infty})},
\end{eqnarray*}
where $C_{R}>0$ is some constant depending on $R$. To bound $\|G(a)\|_{\tilde{L}_{T}^{1}(\dot{B}^{\frac{d}{p}}_{p,\infty})}$, we write
$$G(a)=G'(0)a+\tilde{G}(a)a.$$
where $\tilde{G}(a)$ is a smooth function. Hence, by Proposition \ref{prop2.25}, there holds
$$\|G(a)\|_{\tilde{L}_{T}^{1}(\dot{B}^{\frac{d}{p}}_{p,\infty})}\lesssim\|a\|_{\tilde{L}_{T}^{1}(\dot{B}^{\frac{d}{p}}_{p,\infty})}+
\|a\|^{2}_{\tilde{L}_{T}^{2}(\dot{B}^{\frac{d}{p}}_{p,1})}.$$
Clearly, by embedding relationship, there holds
\begin{eqnarray}
\|a\|_{\tilde{L}_{T}^{2}(\dot{B}^{\frac{d}{p}}_{p,1})}&\lesssim&\|a^{\ell}\|_{\tilde{L}_{T}^{2}(\dot{B}^{\frac{d}{q}-1}_{q,\infty})}+\|a^{h}\|_{\tilde{L}_{T}^{2}(\dot{B}^{\frac{d}{p}+1}_{p,1})}.
\end{eqnarray}
Hence, by the following interpolation in the Chemin-Lerner space:
$$\|f\|_{\tilde{L}_{T}^{\rho}(\dot{B}_{p,r}^{\theta \sigma_{1}+(1-\theta )\sigma_{2}})}\lesssim \|f\| _{\tilde{L}_{T}^{\rho_1}(\dot{B}_{p,r}^{\sigma_{1}})}^{\theta} \|f\|_{\tilde{L}_{T}^{\rho_2}(\dot{B}_{p,r}^{\sigma_{2}})}^{1-\theta }$$
where $p,r\in[1,\infty]$, $\theta \in [0,1]$ and $\frac{1}{\rho}=\frac{\theta}{\rho_1}+\frac{1-\theta}{\rho_2}$, we have by Cauchy inequality that
$$\|a^{\ell}\|_{\tilde{L}_{T}^{2}(\dot{B}^{\frac{d}{q}-1}_{q,\infty})}\lesssim\|a^{\ell}\|^{\frac{1}{2}}_{\tilde{L}_{T}^{\infty}(\dot{B}^{\frac{d}{q}-2}_{q,\infty})}
\|a^{\ell}\|^{\frac{1}{2}}_{\tilde{L}_{T}^{1}(\dot{B}^{\frac{d}{q}}_{q,\infty})}\lesssim\|(a,m)\|_{E^{p,q}_{T}}.$$
Then similar calculations on $\|a^{h}\|_{\tilde{L}_{T}^{2}(\dot{B}^{\frac{d}{p}+1}_{p,1})}$ finally let we arrive at
\begin{eqnarray}\label{E-3.23}
\|g_{4}\|^{\ell}_{\tilde{L}^{1}(\dot{B}^{\frac{d}{q}-3}_{q,\infty})}\lesssim(1+\|(a,m)\|_{E^{p,q}_{T}})\|(a,m)\|^{2}_{E^{p,q}_{T}}.
\end{eqnarray}

As a matter of fact, other nonlinear terms can be estimated at a similar way. For $g_{1}$, one can employ
Corollary \ref{cor1.1} and Proposition \ref{prop3.2} to get
\begin{eqnarray}\label{E-3.24}
&&\|g_{1}\|^{\ell}_{\tilde{L}_{T}^{1}(\dot{B}^{\frac{d}{q}-3}_{q,\infty})}\\
\nonumber&\lesssim&\|\big(Q(a)-1)m\|_{\tilde{L}_{T}^{\infty}(\dot{B}^{\frac{d}{p}-2}_{p,\infty})}\|m\|_{\tilde{L}_{T}^{1}(\dot{B}^{\frac{d}{p}}_{p,\infty})}+
\|m\|_{\tilde{L}_{T}^{\infty}(\dot{B}^{\frac{d}{p}-2}_{p,\infty})}\|\big(Q(a)-1\big)m\|_{\tilde{L}_{T}^{1}(\dot{B}^{\frac{d}{p}}_{p,\infty})}\\
\nonumber&\lesssim&\big(1+\|a\|_{\tilde{L}_{T}^{\infty}(\dot{B}^{\frac{d}{p}}_{p,1})}\big)\|m\|_{\tilde{L}_{T}^{\infty}(\dot{B}^{\frac{d}{p}-2}_{p,\infty})}
\|m\|_{\tilde{L}_{T}^{1}(\dot{B}^{\frac{d}{p}}_{p,1})}\\
\nonumber&\lesssim&(1+\|(a,m)\|_{E^{p,q}_{T}})\|(a,m)\|^{2}_{E^{p,q}_{T}}.
\end{eqnarray}
For the term with $g_{2}=\bar{\mu}\Delta(Q(a)m)+(\bar{\mu}+\bar{\lambda})\nabla\mathrm{div}(Q(a)m)$, we have
\begin{eqnarray}\label{E-3.25}
&&\|g_{2}\|^{\ell}_{\tilde{L}_{T}^{1}(\dot{B}^{\frac{d}{q}-3}_{q,\infty})}\\
\nonumber&\lesssim&\|\nabla(Q(a)m)\|_{\tilde{L}_{T}^{1}(\dot{B}^{\frac{d}{q}-2}_{q,\infty})}
+\|\mathrm{div}(Q(a)m)\|_{\tilde{L}_{T}^{1}(\dot{B}^{\frac{d}{q}-2}_{q,\infty})}\\
\nonumber&\lesssim&\|Q(a)\|_{\tilde{L}_{T}^{\infty}(\dot{B}^{\frac{d}{p}-2}_{p,\infty})}\|\nabla m\|_{\tilde{L}_{T}^{1}(\dot{B}^{\frac{d}{p}}_{p,\infty})}
+\|\nabla m\|_{\tilde{L}_{T}^{\infty}(\dot{B}^{\frac{d}{p}-2}_{p,\infty})}
\|Q(a)\|_{\tilde{L}_{T}^{1}(\dot{B}^{\frac{d}{p}}_{p,\infty})}\\
\nonumber&&+\|\nabla Q(a)\|_{\tilde{L}_{T}^{\infty}(\dot{B}^{\frac{d}{p}-2}_{p,\infty})}\|m\|_{\tilde{L}_{T}^{1}(\dot{B}^{\frac{d}{p}}_{p,\infty})}
+\|m\|_{\tilde{L}_{T}^{\infty}(\dot{B}^{\frac{d}{p}-2}_{p,\infty})}
\|\nabla Q(a)\|_{\tilde{L}_{T}^{1}(\dot{B}^{\frac{d}{p}}_{p,\infty})}\\
\nonumber&\lesssim&\|(a,m)\|^{2}_{E^{p,q}_{T}}+\|(a,m)\|^{3}_{E^{p,q}_{T}}.
\end{eqnarray}
Regarding $g_{3}$, we estimate $\mathrm{div}(\tilde{\mu}(a)D(Q(a)m))$ as an example. By Corollary \ref{cor1.1}, we deduct that
\begin{eqnarray}\label{E-3.26}
&&\|\mathrm{div}(\tilde{\mu}(a)D(Q(a)m))\|^{\ell}_{\tilde{L}^{1}_{T}(\dot{B}^{\frac{d}{q}-3}_{q,\infty})}\\
\nonumber&\lesssim&\|\tilde{\mu}(a)\|_{\tilde{L}^{\infty}_{T}(\dot{B}^{\frac{d}{p}-2}_{p,\infty})}\|D(Q(a)m)\|_{\tilde{L}^{1}_{T}(\dot{B}^{\frac{d}{p}}_{p,\infty})}+
\|D(Q(a)m)\|_{\tilde{L}^{\infty}_{T}(\dot{B}^{\frac{d}{p}-2}_{p,\infty})}\|\tilde{\mu}(a)\|_{\tilde{L}^{1}_{T}(\dot{B}^{\frac{d}{p}}_{p,\infty})}\\
\nonumber&\lesssim&\|a\|_{E^{p,q}_{T}}
\Big(\|\nabla a\|_{\tilde{L}^{2}_{T}(\dot{B}^{\frac{d}{p}}_{p,1})}\|m\|_{\tilde{L}^{2}_{T}(\dot{B}^{\frac{d}{p}}_{p,1})}
+\|a\|_{\tilde{L}^{\infty}_{T}(\dot{B}^{\frac{d}{p}}_{p,1})}\|\nabla m\|_{\tilde{L}^{2}_{T}(\dot{B}^{\frac{d}{p}}_{p,1})}\\
\nonumber&+&\|\nabla a\|_{\tilde{L}^{2}_{T}(\dot{B}^{\frac{d}{p}-1}_{p,1})}\|m\|_{\tilde{L}^{2}_{T}(\dot{B}^{\frac{d}{p}-1}_{p,1})}
+\|a\|_{\tilde{L}^{\infty}_{T}(\dot{B}^{\frac{d}{p}}_{p,1})}\|\nabla m\|_{\tilde{L}^{\infty}_{T}(\dot{B}^{\frac{d}{p}-2}_{p,1})}\Big)\\
\nonumber&\lesssim&\|(a,m)\|^{3}_{E^{p,q}_{T}}.
\end{eqnarray}
Furthermore, performing similar calculations for $\mathrm{div}(\tilde{\mu}(a)Dm)$ and $\nabla\Big(\tilde{\lambda}(a)\mathrm{div}\big((1-Q(a)\big)m\big)\Big)$ enables us to get
\begin{eqnarray*}
\|g_{3}\|^{\ell}_{\tilde{L}^{1}(\dot{B}^{\frac{d}{q}-3}_{q,\infty})}\lesssim(1+\|(a,m)\|_{E^{p,q}_{T}})\|(a,m)\|^{2}_{E^{p,q}_{T}}.
\end{eqnarray*}
What left is to handle the Korteweg terms in low frequencies. It follows from Corollary \ref{cor1.1} that
\begin{multline}\label{E-3.27}
\|g_{5}\|^{\ell}_{\tilde{L}^{1}_{T}(\dot{B}^{\frac{d}{q}-3}_{q,\infty})}\!\!\!\lesssim\!\|\tilde{\kappa}_{1}(a)\|_{\tilde{L}^{\infty}_{T}(\dot{B}^{\frac{d}{p}-2}_{p,\infty})}\|\Delta a\|_{\tilde{L}^{1}_{T}(\dot{B}^{\frac{d}{p}}_{p,\infty})}\!+\!
\|\Delta a\|_{\tilde{L}^{\infty}_{T}(\dot{B}^{\frac{d}{p}-2}_{p,\infty})}\!\|\tilde{\kappa}_{1}(a)\|_{\tilde{L}^{1}_{T}(\dot{B}^{\frac{d}{p}}_{p,\infty})}\\
\lesssim\|(a,m)\|^{2}_{E^{p,q}_{T}}+\|(a,m)\|^{3}_{E^{p,q}_{T}}.
\end{multline}
For $g_{6}$, it suffices to take a look at the term $\frac{\rho^{*}}{2}\nabla\big(\tilde{\kappa}_{2}(a)|\nabla a|^{2}\big)$. Precisely,
\begin{eqnarray*}
&&\|\frac{\rho^{*}}{2}\nabla\big(\tilde{\kappa}_{2}(a)|\nabla a|^{2}\big)\|^{\ell}_{\tilde{L}^{1}_{T}(\dot{B}^{\frac{d}{q}-3}_{q,\infty})}\\
\nonumber&\lesssim&\|\tilde{\kappa}_{2}(a)\nabla a\|_{\tilde{L}^{\infty}_{T}(\dot{B}^{\frac{d}{p}-2}_{p,\infty})}\|\nabla a\|_{\tilde{L}^{1}_{T}(\dot{B}^{\frac{d}{p}}_{p,\infty})}+
\|\nabla a\|_{\tilde{L}^{\infty}_{T}(\dot{B}^{\frac{d}{p}-2}_{p,\infty})}\|\tilde{\kappa}_{2}(a)\nabla a\|_{\tilde{L}^{1}_{T}(\dot{B}^{\frac{d}{p}}_{p,\infty})}\\
\nonumber&\lesssim&\|a\|_{\tilde{L}^{\infty}_{T}(\dot{B}^{\frac{d}{p}}_{p,1})}\|\nabla a\|_{\tilde{L}^{\infty}_{T}(\dot{B}^{\frac{d}{p}-2}_{p,\infty})}\|\nabla
a\|_{\tilde{L}^{1}_{T}(\dot{B}^{\frac{d}{p}}_{p,1})}\\
\nonumber&\lesssim&\|(a,m)\|^{3}_{E^{p,q}_{T}}.
\end{eqnarray*}
Repeating above calculations to $\frac{\rho^{*}}{2}\check{\kappa}\nabla|\nabla a|^{2}$ and $\mathrm{div}\Big(\big(\tilde{\kappa}_{3}(a)+\bar{\kappa}\big)\nabla
a\otimes\nabla a\Big)$ leads to
\begin{eqnarray}\label{nnn}
\|g_{6}\|^{\ell}_{\tilde{L}_{T}^{1}(\dot{B}^{\frac{d}{q}-3}_{q,\infty})}\lesssim(1+\|(a,m)\|_{E^{p,q}_{T}})\|(a,m)\|^{2}_{E^{p,q}_{T}}.
\end{eqnarray}

Putting those estimates \eqref{E-3.23}-\eqref{nnn} together, we conclude that
\begin{eqnarray}\label{E-3.29}
\|g\|^{\ell}_{\tilde{L}_{T}^{1}(\dot{B}^{\frac{d}{q}-3}_{q,\infty})}\lesssim \big(1+\|(a,m)\|_{E^{p,q}_{T}}\big)\|(a,m)\|^{2}_{E^{p,q}_{T}}.
\end{eqnarray}
Furthermore, combining (\ref{4.13c}), (\ref{h-f}) and (\ref{E-3.29}), we arrive at (\ref{R-E3.1}). Hence, the proof of Proposition \ref{propp} is finished.
\end{proof}

%***********************************************

\noindent {\bf{Proof of Theorem \ref{thm2.1} (global existence and uniqueness)}}

Once a priori estimates is constructed, one can prove the global-in-time existence and uniqueness of solutions to (\ref{linearized})-(\ref{composite}) by employing the fixed point argument. It follows from the standard Duhamel formula that
$$\left(
\begin{array}{ccc}
              {a}(t) \\
            {m}(t)\\
           \end{array}
         \right)=\mathcal{G}(t)\left(
\begin{array}{ccc}
{a_{0}}\\
{m_{0}}\\
\end{array}
\right)+\int_{0}^{t}\mathcal G(t-s)\left(
\begin{array}{ccc}
0\\
g(s)\\
\end{array}
         \right)ds,$$
where $\mathcal{G}(t)_{t\geq0}$ is the semi-group associated to the following linear system:
\begin{equation}\label{3.22}
\left\{
\begin{array}{l}\partial_{t}a+\mathrm{div}m=0,\\ [1mm]
 \partial_{t}m-\bar{\mu}\Delta m-\bar{\nu}\nabla\mathrm{div}m-\bar{\kappa}\nabla\Delta a=0.\\[1mm]
 \end{array} \right.
\end{equation}
The explicit definition of $\mathcal{G}(t)=e^{-tA(D)}$ is given by (see e.g., \cite{KT})
\begin{multline*}
\begin{array}{ccc}
\!\!\! e^{-tA(\xi)}\!\!\!\!\!
\end{array}\\
 \triangleq \quad  \!\!\! \left(
\begin{array}{ccc}
\!\!\!\!\!\!\!\!\!\!\!\!\!\!\!\!
\!\!\!\!\!\!\!\!\!\!\!\!\!\!\!\!\!\!\!\!\!\!\!\!\frac{\lambda_{+}(\xi)e^{\lambda_{-}(\xi)t}-\lambda_{-}(\xi)e^{\lambda_{+}(\xi)t}}{\lambda_{+}(\xi)-\lambda_{-}(\xi)}
\,\,\,\,\,\,\,\,\,\,\,\,\,\,\,\,\,\,\,\,\,\,\,\,\,\,\,\,\,\,\,\,\,\,\,\,\,\,\,\,\,\,\,\,\,\,\,i\frac{e^{\lambda_{-}(\xi)t}-e^{\lambda_{+}(\xi)t}}{\lambda_{+}(\xi)-\lambda_{-}(\xi)}\xi^T\\
-i|\xi|^{2}\bar{\kappa}\frac{e^{\lambda_{-}(\xi)t}-e^{\lambda_{+}(\xi)t}}{\lambda_{+}(\xi)-\lambda_{-}(\xi)}\xi
\,\,\,\,\,\,e^{-\bar{\mu}|\xi|^2t}I_{d}\!\!+\!\!\Big(\frac{\lambda_{+}(\xi)e^{\lambda_{+}(\xi)t}-\lambda_{-}(\xi)e^{\lambda_{-}(\xi)t}}{\lambda_{+}(\xi)-\lambda_{-}(\xi)}-e^{-\bar{\mu}|\xi|^2t}\Big)
\frac{\xi\xi^T}{|\xi|^2}\\
\end{array}
\right)\end{multline*}
where
$$\lambda_{\pm}(\xi)=-\frac{\bar{\nu}\pm\sqrt{\bar{\nu}^2-4\bar{\kappa}}}{2}|\xi|^2.$$
Set $$ \left(
\begin{array}{ccc}
{a_{L}}\\
{m_{L}}\\
\end{array}
\right) \triangleq \mathcal{G}(t)\left(
\begin{array}{ccc}
{a_{0}}\\
{m_{0}}\\
\end{array}
\right).$$
Define the functional $\Phi(\tilde{a},\tilde{m})$ in the neighborhood of zero in the space $E^{p,q}_{T}$ by
\begin{equation}\label{4.27}\Phi(\tilde{a},\tilde{m})=\int_{0}^{t}\mathcal{G}(t-s)\left(
\begin{array}{ccc}
0\\
{g(a_{L}+\tilde{a},m_{L}+\tilde{m})}\\
\end{array}
\right)ds.\end{equation}
To get the existence part of Theorem \ref{thm2.1}, it suffices to show that $\Phi(\tilde{a},\tilde{m})$ has a fixed point for in $E^{p,q}_{T}$. For that end,  the proof is divided into two steps: the stability of closed ball $\mathbf{B}(0,r)$ for sufficient small $r$ and the contraction in that ball. Let $a=a_{L}+\tilde{a},m=m_{L}+\tilde{m}$. From Proposition \ref{propp}, we get
\begin{eqnarray}\label{f1}
\|(a_{L},m_{L})\|_{E^{p,q}_{T}}\leq C\|(\nabla a_{0}, m_{0})\|_{\dot{B}^{\frac{d}{p}-1,\frac{d}{q}-3}_{(p,1),(q,\infty)}}\leq C\varepsilon
\end{eqnarray}
and
$$\|\Phi(\tilde{a},\tilde{m})\|_{E^{p,q}_{T}}\leq C(\|g\|^{h}_{\tilde L^{1}_{T}(\dot B^{\frac{d}{p}-1}_{p,1})}+
\|g\|^{\ell}_{\tilde L^{1}_{T}(\dot B^{\frac{d}{q}-3}_{q,\infty})}).$$
Let $r$ be small such that $r\leq\frac{2}{3}R$. Assuming that $2C\varepsilon\leq r$ implies that
$$\|a\|_{\tilde L^{\infty}_{T}(\dot B^{\frac{d}{p}}_{p,1})}\leq C\varepsilon+r\leq R.$$
Thus by calculations in the proof of Proposition \ref{propp}, we have
\begin{eqnarray}&&\|\Phi(\tilde{a},\tilde{m})\|_{E^{p,q}_{T}}\\
\nonumber&\leq&
C_{R}\big(1+\|(a_{L}+\tilde{a},m_{L}+\tilde{m})\|_{E^{p,q}_{T}}\big)\|(a_{L}+\tilde{a},m_{L}+\tilde{m})\|_{E^{p,q}_{T}}^2\\
\nonumber&\leq& C_{R}(1+C\varepsilon+r)(C\varepsilon+r)^2.
\end{eqnarray}
Choosing suitable $(r,\varepsilon)$ such that
\begin{eqnarray}\label{f2}r\leq \min\Big\{1,\frac{1}{30C_{R}}, \frac{2}{3}R\Big\}\,\,\, \mathrm{and} \,\,\,\varepsilon\leq\frac{r}{2C},\end{eqnarray}
we thus have
$$\Phi(\mathbf{B}(0,r))\subset\mathbf{B}(0,r).$$

Next, we prove the contraction property. Let $(\tilde{a}_{1},\tilde{m}_{1})$ and $(\tilde{a}_{2},\tilde{m}_{2})$ be in $\mathbf{B}(0,r)$. We are going to estimate $
\|(\Phi(\tilde{a_{2}},\tilde{m_{2}})-\Phi(\tilde{a_{1}},\tilde{m_{1}}))\|_{E^{p,q}_{T}}$. According to (\ref{4.27}), we get
\begin{eqnarray}\label{contraction}
&&\|(\Phi(\tilde{a_{2}},\tilde{m_{2}})-\Phi(\tilde{a_{1}},\tilde{m_{1}}))\|_{E^{p,q}_{T}}\\
\nonumber&\lesssim&\|g(a_{2},m_{2})-g(a_{1},m_{1})\|^{h}_{\tilde L_{T}^{1}(\dot B^{\frac{d}{p}-1}_{p,1})}+
\|g(a_{2},m_{2})-g(a_{1},m_{1})\|^{\ell}_{\tilde L_{T}^{1}(\dot B^{\frac{d}{q}-3}_{q,\infty})}.
\end{eqnarray}

Bounding those nonlinear terms on the right-hand side of (\ref{contraction}) just follows the similar procedure of a priori estimate in Proposition \ref{propp} combined with Proposition \ref{prop2.26}. Let us for example show the calculation for $g_{4}=a\nabla G(a)$:
\begin{eqnarray*}
&&\|\nabla\big(a_{1} G(a_{1})\big)-\nabla\big(a_{2} G(a_{2})\big)\|^{h}_{\tilde L_{T}^{1}(\dot B^{\frac{d}{p}-1}_{p,1})}\\
&\leq&\|\nabla \big((a_{1}-a_{2})G(a_{1})\big)\|^{h}_{\tilde L_{T}^{1}(\dot B^{\frac{d}{p}-1}_{p,1})}+
\|\nabla\big(a_{2}(G(a_{1})-G(a_{2})\big)\|^{h}_{\tilde L_{T}^{1}(\dot B^{\frac{d}{p}-1}_{p,1})}\\
&\leq&\|\tilde{a}_{1}-\tilde{a}_{2}\|_{\tilde L_{T}^{2}(\dot B^{\frac{d}{p}}_{p,1})}\|G(a_{1})\|_{\tilde L_{T}^{2}(\dot B^{\frac{d}{p}}_{p,1})}+\|a_{2}\|_{\tilde L_{T}^{2}(\dot B^{\frac{d}{p}}_{p,1})}\|G(a_{1})-G(a_{2})\|_{\tilde L_{T}^{2}(\dot B^{\frac{d}{p}}_{p,1})}
\\&\leq&C_{R}(1+\|(a_1,a_2)\|_{E^{p,q}_{T}})\|(a_1,a_2)\|_{E^{p,q}_{T}}\|\tilde{a}_{1}-\tilde{a}_{2}\|_{E^{p,q}_{T}}.
\end{eqnarray*}
On the other hand,
\begin{eqnarray*}
&&\|\nabla\big(a_{1} G(a_{1})\big)-\nabla\big(a_{2} G(a_{2})\big)\|^{\ell}_{\tilde L_{T}^{1}(\dot B^{\frac{d}{q}-3}_{q,\infty})}\\
&\leq&\|\nabla \big((a_{1}-a_{2})G(a_{1})\big)\|^{\ell}_{\tilde L_{T}^{1}(\dot B^{\frac{d}{q}-3}_{q,\infty})}+
\|\nabla\big(a_{2}(G(a_{1})-G(a_{2})\big)\|^{\ell}_{\tilde L_{T}^{1}(\dot B^{\frac{d}{q}-3}_{q,\infty})}
\\&\leq&C_{R}(1+\|(a_1,a_2)\|_{E^{p,q}_{T}})\|(a_1,a_2)\|_{E^{p,q}_{T}}\|\tilde{a}_{1}-\tilde{a}_{2}\|_{E^{p,q}_{T}}.
\end{eqnarray*}
In conclusion, one can get the following error estimate, provided that $r$ and $\varepsilon$ are small enough as in (\ref{f2})
\begin{eqnarray*}
&&\|(\Phi(\tilde{a_{2}},\tilde{m}_{2})-\Phi(\tilde{a_{1}},\tilde{m}_{1}))\|_{E^{p,q}_{T}}
\\&\leq&\Big(1+\|(a_{1},m_{1})\|_{E^{p,q}_{T}}+\|(a_{2},m_{2})\|_{E^{p,q}_{T}}\Big)\Big(\|(a_{1},m_{1})\|_{E^{p,q}_{T}}+\|(a_{2},m_{2})\|_{E^{p,q}_{T}}\Big)\\
&&\times\|(\tilde{a}_{1}-\tilde{a}_{2},\tilde{m}_{1}-\tilde{m}_{2})\|_{E^{p,q}_{T}}
\\&\leq&4C_{R}(C\varepsilon+r+1)(C\varepsilon+r)\|(\tilde{a}_{1}-\tilde{a}_{2},\tilde{m}_{1}-\tilde{m}_{2})\|_{E^{p,q}_{T}}
\\&\leq&\frac{1}{2}\|(\tilde{a}_{1}-\tilde{a}_{2},\tilde{m}_{1}-\tilde{m}_{2})\|_{E^{p,q}_{T}}.
\end{eqnarray*}
Hence, according to the fixed point theorem, there exists a solution $(a,m)$ to (\ref{linearized})-(\ref{composite1}), which obviously belongs to the space $E^{p,q}_{T}$ and
\begin{eqnarray}\label{mima}
\|a\|_{\tilde L^{\infty}_{T}(\dot B^{\frac{d}{p}}_{p,1})}\leq \|(a,m)\|_{E^{p,q}_{T}}\leq \frac{1}{20C_R}
\end{eqnarray}
for any $T>0$, provided that changing $C_R$ into a greater constant if necessary.

Finally, we finish the uniqueness. Let us consider the solutions $(a_{i}, m_{i})$ ($i=1, 2$) constructed in the previous part with the same initial data $(a_{0}, m_{0})$  on $[0,T]$ where $T\in[0,\infty)$. The error variable $(\delta a, \delta m)\triangleq(a_{1}-a_{2}, m_{1}-m_{2})$ satisfies
\begin{equation}
\left\{
\begin{array}{l}\partial_{t}\delta a+\mathrm{div}\delta m=0,\\ [1mm]
 \partial_{t}\delta m-\bar{\mathcal{A}}\delta m-\bar{\kappa}\nabla\Delta \delta a= g(a_{1},m_{1})-g(a_{2},m_{2}),\\[1mm]
(\delta a,\delta m)|_{t=0}=0.\\[1mm]
 \end{array} \right.\label{error-linearized}
\end{equation}
Repeating the linear analysis which is based on the idea of the effective velocity yields
\begin{eqnarray}\label{3.38}
\|(\delta \nabla a,\delta m)\|_{L^{\infty}_{T}(\dot B^{\frac{d}{p}-2}_{p,\infty})}+\|(\delta \nabla a,\delta m)\|_{\tilde L^{1}_{T}(\dot B^{\frac{d}{p}}_{p,\infty})}\leq C\|\delta g\|_{\tilde L^{1}_{T}(\dot B^{\frac{d}{p}-2}_{p,\infty})},
\end{eqnarray}
where $\delta g=g(a_{1},m_{1})-g(a_{2},m_{2})$. For simplicity, we only present the outline of error estimates by using Proposition \ref{prop3.2} and Proposition \ref{prop2.26}. Actually, by  (\ref{mima}) and Proposition \ref{prop2.26}, there holds for $p<2d$
\begin{multline*}
\|\delta g_{1}\|_{\tilde L^{1}_{T}(\dot B^{\frac{d}{p}-2}_{p,\infty})}
\leq C \Big(\big(1+\|a_{i}\|_{\tilde L^{\infty}_{T}(\dot B^{\frac{d}{p}}_{p,1})}\big)\|m_{i}\|_{\tilde L^{2}_{T}(\dot B^{\frac{d}{p}}_{p,1})}\|\delta m\|_{\tilde L^{2}_{T}(\dot B^{\frac{d}{p}-1}_{p,\infty})}\\
+\|\delta a\|_{\tilde L^{\infty}_{T}(\dot B^{\frac{d}{p}-1}_{p,\infty})}\|m_{i}\|^{2}_{\tilde L^{2}_{T}(\dot B^{\frac{d}{p}}_{p,1})}\Big)
\end{multline*}
where we take $s=\frac{d}{p}-1$ in Proposition \ref{prop2.26} which indicates $-d\min(\frac{1}{p},\frac{1}{p'})<s$.

Regarding for $\delta g_{2}$, we obtain
\begin{multline*}
\|\delta g_{2}\|_{\tilde L^{1}_{T}(\dot B^{\frac{d}{p}-2}_{p,\infty})}
\leq C \Big(\|\nabla\delta m\|_{\tilde L^{1}_{T}(\dot B^{\frac{d}{p}-1}_{p,\infty})}\|a_{1}\|_{\tilde L^{\infty}_{T}(\dot B^{\frac{d}{p}}_{p,1})}+
\|\delta m\|_{\tilde L^{2}_{T}(\dot B^{\frac{d}{p}-1}_{p,\infty})}\|\nabla a_{1}\|_{\tilde L^{2}_{T}(\dot B^{\frac{d}{p}}_{p,1})}\\
+\|m_{2}\|_{\tilde L^{2}_{T}(\dot B^{\frac{d}{p}}_{p,1})}\|\nabla \delta a\|_{\tilde L^{2}_{T}(\dot B^{\frac{d}{p}-1}_{p,\infty})}+
\int^{T}_{0}\|\nabla m_{2}\|_{\dot B^{\frac{d}{p}}_{p,1}}\|\delta a\|_{\dot B^{\frac{d}{p}-1}_{p,\infty}}dt\Big).
\end{multline*}
Bounding $\delta g_{3}$ is completely similar to $\delta g_{2}$.  For the error of pressure, we have
\begin{eqnarray*}
\|\delta g_{4}\|_{\tilde L^{1}_{T}(\dot B^{\frac{d}{p}-2}_{p,\infty})}
\leq C\|\delta a\|_{\tilde L^{2}_{T}(\dot B^{\frac{d}{p}}_{p,1})}\|a_{i}\|_{\tilde L^{2}_{T}(\dot B^{\frac{d}{p}-1}_{p,\infty})}, \quad i=1, 2.
\end{eqnarray*}
Also, it is not difficult to handle the term corresponding to $\delta g_{5}$. Precisely,
\begin{eqnarray*}
\|\delta g_{5}\|_{\tilde L^{1}_{T}(\dot B^{\frac{d}{p}-2}_{p,\infty})}
\leq C \big(\|a_{2}\|_{\tilde L^{\infty}_{T}(\dot B^{\frac{d}{p}}_{p,1})}\|\Delta \delta a\|_{\tilde L^{1}_{T}(\dot B^{\frac{d}{p}-1}_{p,\infty})}+\int^{T}_{0}\|\Delta a_{i}\|_{\dot B^{\frac{d}{p}}_{p,1}}\|\delta a\|_{\dot B^{\frac{d}{p}-1}_{p,\infty}}dt\big).
\end{eqnarray*}
Finally, one can estimate $\delta g_{6}$ similarly by repeating above calculations. Thus, by Remark \ref{Rem2.1} and interpolation, we conclude that
\begin{multline}\label{ni}
\|g(a_{1},m_{1})-g(a_{2},m_{2})\|_{\tilde L^{1}_{T}(\dot B^{\frac{d}{p}-2}_{p,\infty})}\leq C\Big(\| a_{i}\|_{\tilde L^{2}_{T}(\dot B^{\frac{d}{p}-1}_{p,1})}\|\nabla \delta a\|_{L^{\infty}_{T}(\dot B^{\frac{d}{p}-2}_{p,\infty})\cap\tilde L^{1}_{T}(\dot B^{\frac{d}{p}}_{p,\infty})}\\+\mathcal{Z}\big(\| a_{i}\|_{\tilde L^{\infty}_{T}(\dot B^{\frac{d}{p}}_{p,1})}+\|(\nabla a_{i},m_{i})\|_{\tilde L^{2}_{T}(\dot B^{\frac{d}{p}}_{p,1})\cap\tilde L^{1}_{T}(\dot B^{\frac{d}{p}+1}_{p,1})}\big)\|\nabla \delta a, \delta m\|_{L^{\infty}_{T}(\dot B^{\frac{d}{p}-2}_{p,\infty})\cap\tilde L^{1}_{T}(\dot B^{\frac{d}{p}}_{p,\infty})}\\
+\int^{T}_{0}\|(\nabla  a_{i},  m_{i})\|_{\dot B^{\frac{d}{p}+1}_{p,1}}\|(\nabla \delta a, \delta m)\|_{\dot B^{\frac{d}{p}-2}_{p,\infty}}dt\Big),
\end{multline}
where $\mathcal{Z}(x)\triangleq x+x^2$. According to the global existence of Theorem \ref{thm2.1}, if taking $\varepsilon>0$ small enough such that
\begin{eqnarray*}\|a_{i}\|_{\tilde L^{\infty}_{T}(\dot B^{\frac{d}{p}}_{p,1})}+\| a_{i}\|_{\tilde L^{2}_{T}(\dot B^{\frac{d}{p}-1}_{p,1})}+\|(\nabla a_{i},m_{i})\|_{\tilde L^{2}_{T}(\dot B^{\frac{d}{p}}_{p,1})\cap\tilde L^{1}_{T}(\dot B^{\frac{d}{p}+1}_{p,1})}\ll1,\end{eqnarray*}
then (\ref{3.38})-(\ref{ni}) implies that
\begin{eqnarray*}
\|(\delta \nabla a,\delta m)\|_{L^{\infty}_{T}(\dot B^{\frac{d}{p}-2}_{p,\infty})}
\leq C\int^{T}_{0}\|(\nabla  a_{i},  m_{i})\|_{\dot B^{\frac{d}{p}+1}_{p,1}}\|(\nabla \delta a, \delta m)\|_{\dot B^{\frac{d}{p}-2}_{p,\infty}}dt.
\end{eqnarray*}
Gronwall's Lemma gives that $(\delta a, \delta m)\equiv0$ immediately in the interval $[0,T]$, since $\|(\nabla  a_{i},  m_{i})\|_{\dot B^{\frac{d}{p}+1}_{p,1}}$ are integrable on $[0,T]$. Then Sobolev embedding and Remark 2.2 ensure that $E^{p,q}_{T}\subset L^{\infty}_{T}(\dot B^{\frac{d}{p}-1}_{p,\infty})\times L^{\infty}_{T}(\dot B^{\frac{d}{p}-2}_{p,\infty})$, which indicates the uniqueness in $E^{p,q}_{T}$. Furthermore, we get the uniqueness in $[0,\infty)$ by the continuity argument.

\subsection{Gevrey analyticity}
In the regime of small data, the spectrum analysis of \eqref{R-E69} implies the parabolic smoothing of solution $(a,m)$, which enables us to establish
Gevrey analyticity for the nonlinear problem (\ref{1.1})-(\ref{Eq:1.2}). The main part of proof lies in the following a priori estimate, which indicates the evolution of
Gevrey regularity.

\begin{prop}\label{propG}
Let $(a,m)$ be the solution constructed in Theorem \ref{thm2.1}. Denote $(A,U)\triangleq (e^{\sqrt {c_{0}t}\Lambda_1}{a},e^{\sqrt {c_{0}t}\Lambda_1}{m})$ where $c_{0}>0$ is some constant. There exists a constant $R_{0}$ that if
$$\|A\|_{\tilde{L}^{\infty}_{T}(\dot{B}^{\frac{d}{p}}_{p,1})}\leq R_{0},$$
then it holds for $q\neq1$ that
\begin{eqnarray}\label{E3.36}
\hspace {5mm}\|(A,U)\|_{E^{p,q}_{T}}\lesssim\|(\nabla a_{0}, m_{0})\|_{\dot{B}^{\frac{d}{p}-1,\frac{d}{q}-3}_{(p,1),(q,\infty)}}+\big(1+\|(A,U)\|_{E^{p,q}_{T}}\big)\|(A,U)\|^{2}_{E^{p,q}_{T}}.\end{eqnarray}
\end{prop}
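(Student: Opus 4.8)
The plan is to mimic the a priori estimate of Proposition \ref{propp}, but working throughout with the Gevrey-conjugated unknowns $(A,U)=(e^{\sqrt{c_0 t}\Lambda_1}a, e^{\sqrt{c_0 t}\Lambda_1}m)$. The key observation is that conjugating the linear part of \eqref{linearized} by $e^{\sqrt{c_0 t}\Lambda_1}$ produces, up to harmless Fourier multipliers bounded on every $L^p$ ($1<p<\infty$) and uniformly in $t$ (Lemma \ref{lem5.2}), the same purely parabolic structure: one checks that $e^{\sqrt{c_0t}\Lambda_1}e^{-t(\bar\nu-\alpha)\Delta (\cdot)}$ acting on a dyadic block is controlled because $\frac12(\bar\nu-\alpha)|\xi|^2 - \sqrt{c_0 t}|\xi|_1 \le C$ for $c_0$ small enough, exactly as in Lemma \ref{lem5.2}. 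Concretely, first I would introduce the conjugated effective velocity $W \triangleq e^{\sqrt{c_0t}\Lambda_1}w = \cQ U + \alpha\nabla A$ and derive, from \eqref{4.2}, an equation of the form $\partial_t W - (\bar\nu-\alpha)\Delta W = e^{\sqrt{c_0t}\Lambda_1}\cQ g + (\text{commutator term})$, where the commutator is $[\,e^{\sqrt{c_0t}\Lambda_1},\partial_t\,]$-type and is absorbed by choosing $c_0$ small (this is the standard trick: $\frac{d}{dt}e^{\sqrt{c_0t}\Lambda_1} = \frac{c_0}{2\sqrt{c_0t}}\Lambda_1 e^{\sqrt{c_0t}\Lambda_1}$, and $\frac{c_0}{2\sqrt{c_0t}}|\xi|_1 \le \frac14(\bar\nu-\alpha)|\xi|^2 + C$). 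Then the maximal-regularity estimate of Proposition \ref{prop2.2}, applied blockwise, yields the high-frequency bound on $(\nabla A,\cQ U)$; the incompressible part $\mathcal{P}U$ is handled the same way from \eqref{incompressible}; and the algebraic identities \eqref{4.4}–\eqref{4.6} transfer verbatim to the conjugated variables, giving the analogue of \eqref{4.7}. The low-frequency analysis is identical, producing the analogue of \eqref{4.13}, and combining them gives
\begin{equation*}
\|(A,U)\|_{E^{p,q}_T} \lesssim \|(\nabla a_0,m_0)\|_{\dot B^{\frac dp-1,\frac dq-3}_{(p,1),(q,\infty)}} + \|e^{\sqrt{c_0t}\Lambda_1}g\|^h_{\tilde L^1_T(\dot B^{\frac dp-1}_{p,1})} + \|e^{\sqrt{c_0t}\Lambda_1}g\|^\ell_{\tilde L^1_T(\dot B^{\frac dq-3}_{q,\infty})}.
\end{equation*}

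The bulk of the work is then estimating $e^{\sqrt{c_0t}\Lambda_1}g$ in those two norms in terms of $\|(A,U)\|_{E^{p,q}_T}$. Here I would systematically replace, in each of the estimates for $g_1,\dots,g_6$ from the proof of Proposition \ref{propp}, the ordinary product estimate (Proposition \ref{prop3.2}) by its Gevrey counterpart Proposition \ref{prop5.1}, and the composition estimate (Proposition \ref{prop2.25}) by Proposition \ref{prop5.2}; the point of these propositions is precisely that $\|e^{\sqrt{c_0t}\Lambda_1}(fg)\|$ and $\|e^{\sqrt{c_0t}\Lambda_1}F(f)\|$ obey the same bounds as $\|fg\|$, $\|F(f)\|$ with $f,g$ replaced by $e^{\sqrt{c_0t}\Lambda_1}f,\,e^{\sqrt{c_0t}\Lambda_1}g$. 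Since $e^{\sqrt{c_0t}\Lambda_1}$ commutes with all the differential operators $\nabla,\Delta,\mathrm{div},D$, each nonlinear term in \eqref{nonlinear} is again a (derivative of a) product/composition of $a$'s and $m$'s, and the smallness hypothesis $\|A\|_{\tilde L^\infty_T(\dot B^{d/p}_{p,1})}\le R_0$ plays the role that $\|a\|_{\tilde L^\infty_T(\dot B^{d/p}_{p,1})}\le R$ played before, so the composition constants stay under control. The interpolation inequalities in Chemin–Lerner spaces used in Proposition \ref{propp} (e.g. $\tilde L^2_T$ between $\tilde L^\infty_T$ and $\tilde L^1_T$) are unaffected by the fixed multiplier $e^{\sqrt{c_0t}\Lambda_1}$, and the $L^q$–$L^p$ product estimate of Corollary \ref{cor1.1} must be replaced by its Gevrey version; one should note that Proposition \ref{prop5.1} is stated for $1<p<\infty$, which is why the statement requires $q\neq1$ (and $p\geq q$ together with $p<d$ keeps $p$ finite), so the case $q=1$ is genuinely excluded here.

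The main obstacle, and the only place where real care is needed beyond bookkeeping, is the commutator between $e^{\sqrt{c_0t}\Lambda_1}$ and the time derivative / heat semigroup: one must verify that choosing $c_0$ sufficiently small (depending only on $\bar\nu-\alpha$, $\bar\mu$, $d$, $p$) makes the extra term $\frac{c_0}{2\sqrt{c_0 t}}\Lambda_1$ dominated by the parabolic dissipation in a way compatible with the maximal-regularity estimate blockwise in frequency, and that the resulting Fourier multipliers are uniformly bounded on $L^p$ — this is exactly the content of Lemmas \ref{lem5.1}–\ref{lem5.2} and the construction preceding Lemma \ref{lem5.3}, so it can be invoked rather than redone. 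A secondary technical point is checking that every composite function in \eqref{composite1} ($Q$, $G$, $\tilde\mu$, $\tilde\lambda$, $\tilde\kappa_i$) is real analytic near $0$ with value $0$ there (guaranteed by hypothesis $(H_2)$), so that Proposition \ref{prop5.2} applies; for the low-frequency terms one uses it with the relaxed range $s>-d\min(1/p,1/p')$ at $s=d/q-3$, for which the constraint $\frac1q<\frac1p+\frac2d$ together with $p<d$ is exactly what is needed, just as in the proof of Proposition \ref{cor1.01}. Assembling the high- and low-frequency nonlinear bounds into \eqref{E3.36} is then immediate.
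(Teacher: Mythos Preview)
Your overall architecture is correct and matches the paper's: pass to the Gevrey variables, reproduce the linear parabolic estimates via Duhamel together with Lemmas \ref{lem5.1}--\ref{lem5.2}, and then redo the nonlinear bounds with Gevrey-product and Gevrey-composition estimates. Two points, however, deserve attention.

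First, a minor one: the pointwise inequality you write, $\frac{c_0}{2\sqrt{c_0t}}|\xi|_1 \le \frac14(\bar\nu-\alpha)|\xi|^2 + C$, is false as stated (the left side blows up as $t\to0$ for fixed $\xi$). The paper avoids any such commutator inequality entirely: it writes the Duhamel formula for $w$, applies $e^{\sqrt{c_0t}\Lambda_1}$, and splits the exponentials as
\[
e^{\sqrt{c_0t}\Lambda_1}e^{\tilde\alpha(t-\tau)\Delta}
= \bigl(e^{\sqrt{c_0(t-\tau)}\Lambda_1+\tilde\alpha(t-\tau)\Delta}\bigr)\,
  \bigl(e^{\sqrt{c_0}(\sqrt t-\sqrt\tau-\sqrt{t-\tau})\Lambda_1}\bigr)\,
  e^{\sqrt{c_0\tau}\Lambda_1},
\]
then invokes Lemma \ref{lem5.2} for the first factor and Lemma \ref{lem5.1} for the second. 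You clearly know these lemmas are the right tools, so this is only a wording issue.

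Second, and this is the genuine gap: it is \emph{not} true that one can ``systematically replace Proposition \ref{prop2.25} by Proposition \ref{prop5.2}''. Proposition \ref{prop5.2} is restricted to $s\le d/p$, whereas the high-frequency treatment of $G_2$ (and likewise $G_3$) forces you to bound $\|e^{\sqrt{c_0t}\Lambda_1}\nabla Q(a)\|_{\tilde L^2_T(\dot B^{d/p}_{p,1})}=\|e^{\sqrt{c_0t}\Lambda_1}Q(a)\|_{\tilde L^2_T(\dot B^{d/p+1}_{p,1})}$, i.e.\ a Gevrey composition estimate at regularity $d/p+1>d/p$. The paper singles this out explicitly and introduces a \emph{new} composition result, Proposition \ref{prop4.1}, to cover $s>d/p$ (proved by writing $F(z)=\sum a_n z^n$, differentiating once to drop the index back to $\le d/p$, and then applying Proposition \ref{prop5.1} term by term). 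Without this extra ingredient your scheme stalls at $I_2$ in the $G_2$ estimate. You should also note that the low-frequency product estimate you need (the Gevrey analogue of Corollary \ref{cor1.1}) is not a corollary of Proposition \ref{prop5.1} alone; the paper states and proves it separately (Proposition \ref{cor2.05} / Corollary \ref{cor2.1}, via Lemma \ref{lem key G}), and the requirement $1<q$ there is precisely why $q\neq1$ appears in the hypothesis.
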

\begin{proof}
In order to get the Gevrey estimates, one can repeat the procedure leading to Proposition \ref{propp} after employing the Fourier multiplier $e^{\sqrt {c_{0}t}\Lambda_1}$ for some $c_{0}>0$ everywhere. For convenience, let us write $G=e^{\sqrt {c_{0}t}\Lambda_1}{g}$ and begin with the high-frequency estimates.
By using the effective velocity $w=\cQ
m+\alpha \nabla a$ with $\alpha=\frac{1}{2}(\bar{\nu}\pm\sqrt{\bar{\nu}^2-4\bar{\kappa}})$, we have
 \begin{equation*}
\d_tw-\tilde{\alpha}\Delta w=\cQ g,
\end{equation*}
where $\tilde{\alpha}=\bar{\nu}-\alpha$. It follows from the Duhamel's formula that
 \begin{equation*}
w(t)=e^{\tilde{\alpha}t \Delta}w_{0}+\int^{t}_{0}e^{\tilde{\alpha}(t-\tau) \Delta}\cQ g(\tau)d\tau.
\end{equation*}
Define $W\triangleq e^{\sqrt {c_{0}t}\Lambda_1}{w}$. Then $W$ satisfies the following equation
 \begin{equation*}
W(t)=e^{\sqrt{c_{0}t}\Lambda_1+\tilde{\alpha}t \Delta}w_{0}+\int^{t}_{0}
e^{\sqrt{c_{0}(t-\tau)}\Lambda_1+\tilde{\alpha}(t-\tau) \Delta}
e^{\sqrt{c_{0}}(-\sqrt{t-\tau}+\sqrt{t}-\sqrt{\tau})\Lambda_1}\cQ G(\tau)d\tau,
\end{equation*}
where the positive constant $c_0$ is chosen such that $c_{0}\leq2\mathrm{min}\{\tilde{\alpha},\alpha,\bar{\mu}\}$.

Note that the fact that Fourier multipliers can commute each other, so it follows from Lemma \ref{lem5.1} and Lemma \ref{lem5.2} that (restricted in high
frequency cut-off with $j\geq j_{0}$)
\begin{equation}\label{Cal}
\|W\|^{h}_{\tilde{L}_{T}^{\infty}(\dot{B}^{\frac{d}{p}-1}_{p,1})}+
\|W\|^{h}_{\tilde{L}_{T}^{1}(\dot{B}^{\frac{d}{p}+1}_{p,1})}\lesssim  \|w_0\|^{h}_{\dot{B}^{\frac{d}{p}-1}_{p,1}}+\|\cQ
G\|^{h}_{\tilde{L}_{T}^{1}(\dot{B}^{\frac{d}{p}-1}_{p,1})}.
\end{equation}
We recall the momentum $\mathcal{Q}m$ can be written by
\begin{equation}\label{Qm}
\d_t\cQ m-\alpha \Delta \cQ m=\frac{\bar{\kappa}}{\alpha}\Delta w+\cQ g.
\end{equation}
Applying $e^{\sqrt {c_{0}t}\Lambda_1}$ to (\ref{Qm}) and repeating the same calculation as (\ref{Cal}) imply that
\begin{equation*}
\|\mathcal{Q}U\|^{h}_{\tilde{L}_{T}^{\infty}(\dot{B}^{\frac{d}{p}-1}_{p,1})}+
\|\mathcal{Q}U\|^{h}_{\tilde{L}_{T}^{1}(\dot{B}^{\frac{d}{p}+1}_{p,1})}\lesssim
\|\mathcal{Q}m_0\|^{h}_{\dot{B}^{\frac{d}{p}-1}_{p,1}}+\|w_{0}\|^{h}_{\dot{B}^{\frac{d}{p}-1}_{p,1}}
+\|\cQ G\|^{h}_{\tilde{L}_{T}^{1}(\dot{B}^{\frac{d}{p}-1}_{p,1})}.
\end{equation*}
Therefore, by (\ref{4.4}), we deduct that
\begin{equation}\label{Gh2}
\|(\nabla A,\mathcal{Q}U)\|^{h}_{\tilde{L}_{T}^{\infty}(\dot{B}^{\frac{d}{p}-1}_{p,1})}+\|(\nabla A,\mathcal{Q}U)\|^{h}_{\tilde{L}_{T}^{1}(\dot{B}^{\frac{d}{p}+1}_{p,1})}\lesssim
\|(\nabla a_{0},\mathcal{Q}m_{0})\|^{h}_{\dot{B}^{\frac{d}{p}-1}_{p,1}}+
\|\mathcal{Q}G\|^{h}_{\tilde{L}_{T}^{1}(\dot{B}^{\frac{d}{p}-1}_{p,1})}.
\end{equation}
Regarding the incompressible part, applying $e^{\sqrt {c_{0}t}\Lambda_1}$ to (\ref{incompressible}) yields
\begin{equation}\label{4.8r}
\|\mathcal{P}U\|^{h}_{\tilde{L}_{T}^{\infty}(\dot{B}^{\frac{d}{p}-1}_{p,1})}+\|\mathcal{P}U\|^{h}_{\tilde{L}_{T}^{1}(\dot{B}^{\frac{d}{p}+1}_{p,1})}\lesssim
\|\mathcal{P}m_{0}\|^{h}_{\dot{B}^{\frac{d}{p}-1}_{p,1}}+
\|\mathcal{P}G\|^{h}_{\tilde{L}_{T}^{1}(\dot{B}^{\frac{d}{p}-1}_{p,1})}.
\end{equation}
Consequently, it follows from (\ref{Gh2}) and (\ref{4.8r}) that
\begin{equation}\label{Gh}
\|(\nabla A,U)\|^{h}_{\tilde{L}_{T}^{\infty}(\dot{B}^{\frac{d}{p}-1}_{p,1})}+\|(\nabla A,U)\|^{h}_{\tilde{L}_{T}^{1}(\dot{B}^{\frac{d}{p}+1}_{p,1})}\lesssim
\|(\nabla a_{0},m_{0})\|^{h}_{\dot{B}^{\frac{d}{p}-1}_{p,1}}+
\|G\|^{h}_{\tilde{L}_{T}^{1}(\dot{B}^{\frac{d}{p}-1}_{p,1})}.
\end{equation}
In spirit of Gevrey estimates in high frequencies, we obtain
\begin{equation}\label{Gl}
\|(\nabla A,U)\|^{\ell}_{\tilde{L}_{T}^{\infty}(\dot{B}^{\frac{d}{q}-3}_{q,\infty})}+\|(\nabla A,U)\|^{\ell}_{\tilde{L}_{T}^{1}(\dot{B}^{\frac{d}{q}-1}_{q,\infty})}\lesssim
\|(\nabla a_{0},m_{0})\|^{\ell}_{\dot{B}^{\frac{d}{q}-3}_{q,\infty}}+
\|G\|^{\ell}_{\tilde{L}_{T}^{1}(\dot{B}^{\frac{d}{q}-3}_{q,\infty})}.
\end{equation}
Together (\ref{Gh}) with (\ref{Gl}), we conclude that
\begin{equation}\label{lin}
\|(\nabla A,U)\|_{E^{p,q}_{T}}\lesssim\|(\nabla a_{0},m_{0})\|_{\dot{B}^{\frac{d}{p}-1,\frac{d}{q}-3}_{(p,1),(q,\infty)}}+
\|G\|^{h}_{\tilde{L}_{T}^{1}(\dot{B}^{\frac{d}{p}-1}_{p,1})}+
\|G\|^{\ell}_{\tilde{L}_{T}^{1}(\dot{B}^{\frac{d}{q}-3}_{q,\infty})}.
\end{equation}

What left is the estimate of $G$. Let us handle the norm $\|G\|^{h}_{\tilde{L}_{T}^{1}(\dot{B}^{\frac{d}{p}-1}_{p,1})}$ first. Setting $R_0\triangleq\mathrm\min\{R_{Q}, R_{G}, R_{\tilde{\lambda}}, R_{\tilde{\mu}}, R_{\tilde{\kappa}_{i}}\}$ with $i=1, 2, 3$,  which stands for the minimum of convergence radii of those analytic functions appearing in the nonlinear term $g$. It follows from Propositions \ref{prop5.1}-\ref{prop5.2} that
\begin{eqnarray}\label{hip}
\|G_{1}\|^{h}_{\tilde{L}_{T}^{1}(\dot{B}^{\frac{d}{p}-1}_{p,1})}&\lesssim&\|e^{\sqrt {c_{0}t}\Lambda_1}\Big(\big(Q(a)-1\big)m\otimes
m\Big)\|^{h}_{\tilde{L}_{T}^{1}(\dot{B}^{\frac{d}{p}}_{p,1})}\\
\nonumber&\lesssim&(1+\|A\|_{\tilde{L}^{\infty}(\dot{B}^{\frac{d}{p}}_{p,1})})\|U\|^{2}_{\tilde{L}_{T}^{2}(\dot{B}^{\frac{d}{p}}_{p,1})}\\
\nonumber&\lesssim&(1+\|(A,U)\|_{E^{p,q}_{T}})\|(A,U)\|^{2}_{E^{p,q}_{T}}.
\end{eqnarray}
For $G_{2}$, without loss of generality, it suffices to deal with the term $\bar{\mu}\Delta(Q(a)m)$. Precisely,
\begin{eqnarray*}
\|\bar{\mu}e^{\sqrt {c_{0}t}\Lambda_1}\Delta(Q(a)m)\|^{h}_{\tilde{L}^{1}_{T}(\dot{B}^{\frac{d}{p}-1}_{p,1})}&\lesssim&\|e^{\sqrt {c_{0}t}\Lambda_1}Q(a)\nabla m\|
^{h}_{\tilde{L}^{1}_{T}(\dot{B}^{\frac{d}{p}}_{p,1})}+\|e^{\sqrt {c_{0}t}\Lambda_1}\nabla Q(a)m\|
^{h}_{\tilde{L}^{1}_{T}(\dot{B}^{\frac{d}{p}}_{p,1})}\\
\nonumber&\triangleq&I_{1}+I_{2},
\end{eqnarray*}
where
\begin{eqnarray*}
I_{1}\lesssim\|A\|_{\tilde{L}^{\infty}_{T}(\dot{B}^{\frac{d}{p}}_{p,1})}\|U\|_{\tilde{L}^{1}_{T}(\dot{B}^{\frac{d}{p}+1}_{p,1})}.
\end{eqnarray*}
For $I_{2}$, it follows from Proposition \ref{prop5.1} that
\begin{eqnarray*}
I_{2}\lesssim\|e^{\sqrt {c_{0}t}\Lambda_1}Q(a)\|_{\tilde{L}^{2}_{T}(\dot{B}^{\frac{d}{p}+1}_{p,1})}\|U\|_{\tilde{L}^{2}_{T}(\dot{B}^{\frac{d}{p}}_{p,1})}.
\end{eqnarray*}
Here, we cannot apply Proposition \ref{prop5.2} to estimate $\|e^{\sqrt {c_{0}t}\Lambda_1}Q(a)\|_{\tilde{L}^{2}_{T}\dot{B}^{\frac{d}{p}+1}_{p,1}}$ directly,
since the regularity index is greater than $d/p$. To do this, we develop Proposition \ref{prop4.1} where the regularity index can be relaxed to $s>d/p$.
The proof of Proposition \ref{prop4.1} will be given in Appendix for clarity. Consequently, we have
\begin{eqnarray*}
I_{2}\lesssim\|A\|_{\tilde{L}^{2}_{T}(\dot{B}^{\frac{d}{p}+1}_{p,1})}\|U\|_{\tilde{L}^{2}_{T}(\dot{B}^{\frac{d}{p}}_{p,1})}.
\end{eqnarray*}
Thus one can get
\begin{eqnarray}
&&\|\bar{\mu}e^{\sqrt {c_{0}t}\Lambda_1}\Delta(Q(a)m)\|^{h}_{\tilde{L}^{1}_{T}(\dot{B}^{\frac{d}{p}-1}_{p,1})}\nonumber\\&\lesssim&\|A\|_{\tilde{L}^{2}_{T}(\dot{B}^{\frac{d}{p}+1}_{p,1})}\|U\|_{\tilde{L}^{2}_{T}(\dot{B}^{\frac{d}{p}}_{p,1})}+
\|A\|_{\tilde{L}^{\infty}_{T}(\dot{B}^{\frac{d}{p}}_{p,1})}\|U\|_{\tilde{L}^{1}_{T}(\dot{B}^{\frac{d}{p}+1}_{p,1})}.
\end{eqnarray}
Bounding the term $(\bar{\mu}+\bar{\lambda})\nabla\mathrm{div}(Q(a)m)$ is similar, so we get
\begin{eqnarray}
&&\|G_{2}\|^{h}_{\tilde{L}^{1}_{T}(\dot{B}^{\frac{d}{p}-1}_{p,1})}\nonumber\\&\lesssim&\hspace{-3mm}\|A\|_{\tilde{L}^{2}_{T}(\dot{B}^{\frac{d}{p}+1}_{p,1})}\|U\|_{\tilde{L}^{2}_{T}(\dot{B}^{\frac{d}{p}}_{p,1})}+
\|A\|_{\tilde{L}^{\infty}_{T}(\dot{B}^{\frac{d}{p}}_{p,1})}\|U\|_{\tilde{L}^{1}_{T}(\dot{B}^{\frac{d}{p}+1}_{p,1})}\lesssim\|(A,U)\|^{2}_{E^{p,q}_{T}}.
\end{eqnarray}
Also, it follows from Proposition \ref{prop4.1} that
\begin{eqnarray}
&&\|G_{3}\|^{h}_{\tilde{L}^{1}_{T}(\dot{B}^{\frac{d}{p}-1}_{p,1})}\\
\nonumber&\lesssim&\|e^{\sqrt
{c_{0}t}\Lambda_1}\big\{\tilde{\mu}(a)D\big((1-Q(a))m\big)\big\}\|^{h}_{\tilde{L}^{1}_{T}(\dot{B}^{\frac{d}{p}}_{p,1})}\\ \nonumber&&+\|e^{\sqrt {c_{0}t}\Lambda_1}\big\{\tilde{\Lambda_1}(a)\mathrm{div}\big((1-Q(a))m\big)\big\}\|^{h}_{\tilde{L}^{1}_{T}(\dot{B}^{\frac{d}{p}}_{p,1})}\\
\nonumber&\lesssim&\|A\|_{\tilde{L}^{\infty}_{T}(\dot{B}^{\frac{d}{p}}_{p,1})}\Big(\|A\|_{\tilde{L}^{2}_{T}(\dot{B}^{\frac{d}{p}+1}_{p,1})}\|U\|_{\tilde{L}^{2}_{T}(\dot{B}^{\frac{d}{p}}_{p,1})}
+(1+\|A\|_{\tilde{L}^{\infty}_{T}(\dot{B}^{\frac{d}{p}}_{p,1})})\|U\|_{\tilde{L}^{1}_{T}(\dot{B}^{\frac{d}{p}+1}_{p,1})})\Big)\\
\nonumber&\lesssim&\big(1+\|(A,U)\|_{E^{p,q}_{T}}\big)\|(A,U)\|^{2}_{E^{p,q}_{T}}.
\end{eqnarray}
The nonlinear pressure term can be estimated as follows
\begin{equation}
\|G_{4}\|^{h}_{\tilde{L}^{1}_{T}(\dot{B}^{\frac{d}{p}-1}_{p,1})}\lesssim\|e^{\sqrt
{c_{0}t}\Lambda_1}\big(aG(a)\big)\|^{h}_{\tilde{L}^{1}_{T}(\dot{B}^{\frac{d}{p}}_{p,1})}\lesssim
\|A\|^{2}_{\tilde{L}^{2}_{T}(\dot{B}^{\frac{d}{p}}_{p,1})}\lesssim\|(A,U)\|^{2}_{E^{p,q}_{T}}.
\end{equation}
Regarding the high frequencies of those capillary terms, it follows that
\begin{multline}\label{hop}
\|G_{5}\|^{h}_{\tilde{L}^{1}_{T}(\dot{B}^{\frac{d}{p}-1}_{p,1})}\lesssim\|e^{\sqrt {c_{0}t}\Lambda_1}\big(\tilde{\kappa}_{1}(a)\Delta
a\big)\|^{h}_{\tilde{L}^{1}_{T}(\dot{B}^{\frac{d}{p}}_{p,1})}\\ \lesssim
\|A\|_{\tilde{L}^{\infty}_{T}(\dot{B}^{\frac{d}{p}}_{p,1})}{\|A\|_{\tilde{L}^{1}_{T}(\dot{B}^{\frac{d}{p}+2}_{p,1})}}
\lesssim\|(A,U)\|^{2}_{E^{p,q}_{T}}
\end{multline}
and
\begin{eqnarray}\label{kow}
&&\|G_{6}\|^{h}_{\tilde{L}^{1}_{T}(\dot{B}^{\frac{d}{p}-1}_{p,1})}\\
\nonumber&\lesssim&\|e^{\sqrt {c_{0}t}\Lambda_1}\big\{\big(\tilde{\kappa}_{2}(a)+\check{\kappa}\big)|\nabla a|^{2}\big\}\|^{h}_{\tilde{L}^{1}_{T}(\dot{B}^{\frac{d}{p}}_{p,1})}+
\|e^{\sqrt {c_{0}t}\Lambda_1}\big\{\big(\tilde{\kappa}_{3}(a)+\bar{\kappa}\big)\nabla a\otimes\nabla a\big\}\|^{h}_{\tilde{L}^{1}_{T}(\dot{B}^{\frac{d}{p}}_{p,1})}\\
\nonumber&\lesssim&(1+\|A\|_{\tilde{L}^{\infty}_{T}(\dot{B}^{\frac{d}{p}}_{p,1})})\|A\|^{2}_{\tilde{L}^{2}_{T}(\dot{B}^{\frac{d}{p}+1}_{p,1})}\\
\nonumber&\lesssim&(1+\|(A,U)\|_{E^{p,q}_{T}}\big)\|(A,U)\|^{2}_{E^{p,q}_{T}}.
\end{eqnarray}
Adding those estimates (\ref{hip})-(\ref{kow}) together,  we conclude that
\begin{eqnarray}\label{hf non}
\|G\|^{h}_{\tilde{L}^{1}_{T}(\dot{B}^{\frac{d}{p}-1}_{p,1})}\lesssim
\big(1+\|(A,U)\|_{E^{p,q}_{T}}\big)\|(A,U)\|^{2}_{E^{p,q}_{T}}.
\end{eqnarray}

According to Lemma \ref{lem key G} (taking $s=\frac{d}{p}$, $k_{1}=1$ and $k=2$), we can establish new product estimates where Gevrey regularity is involved.
\begin{prop}\label{cor2.05}
Let $1 < q\leq p\leq 2q$, $p<d$ and
$$\frac{1}{q}<\frac{1}{p}+\frac{2}{d}.$$
Then it holds that
$$\|e^{\sqrt {c_{0}t}\Lambda_1}(ab)\|_{\dot{B}^{\frac{d}{q}-2}_{q,\infty}}\lesssim\|A\|_{\dot{B}^{\frac{d}{p}-2}_{p,\infty}}\|B\|_{\dot{B}^{\frac{d}{p}}_{p,\infty}}+
\|B\|_{\dot{B}^{\frac{d}{p}-2}_{p,\infty}}\|A\|_{\dot{B}^{\frac{d}{p}}_{p,\infty}}.$$
\end{prop}

As the direct consequence, we have also the following version in the Chemin-Lerner spaces.
\begin{cor}\label{cor2.1}
Set $\rho,\rho_{1},\rho_{2},\bar{\rho}_{1},\bar{\rho}_{2}\in[1,\infty]$. Let $1 < q\leq p\leq 2q$, $p<d$ and
$$\frac{1}{q}<\frac{1}{p}+\frac{2}{d}.$$
Then it holds that
$$\|e^{\sqrt {c_{0}t}\Lambda_1}(ab)\|_{\tilde{L}_{T}^{\rho}(\dot{B}^{\frac{d}{q}-2}_{q,\infty})}\lesssim\|A\|_{\tilde{L}_{T}^{\rho_{1}}(\dot{B}^{\frac{d}{p}-2}_{p,\infty})}
\|B\|_{\tilde{L}_{T}^{\rho_{2}}(\dot{B}^{\frac{d}{p}}_{p,\infty})}+\|B\|_{\tilde{L}_{T}^{\bar{\rho}_{1}}(\dot{B}^{\frac{d}{p}-2}_{p,\infty})}
\|A\|_{\tilde{L}_{T}^{\bar{\rho}_{2}}(\dot{B}^{\frac{d}{p}}_{p,\infty})}$$
for any $T>0$, where $\frac{1}{\rho}=\frac{1}{\rho_{1}}+\frac{1}{\rho_{2}}=\frac{1}{\bar{\rho}_{1}}+\frac{1}{\bar{\rho}_{2}}$.
\end{cor}

Indeed, Corollary \ref{cor2.1} and Proposition \ref{prop4.1} are sufficient to estimate the low-frequency norm $\|G\|^{\ell}_{\tilde{L}_{T}^{1}(\dot{B}^{\frac{d}{q}-3}_{q,\infty})}$. Precisely,
\begin{eqnarray}\label{kjk}
\|G_{1}\|^{\ell}_{\tilde{L}_{T}^{1}(\dot{B}^{\frac{d}{q}-3}_{q,\infty})}&\lesssim&\|e^{\sqrt {c_{0}t}\Lambda_1}\big\{\big(Q(a)-1\big)(m\otimes
m)\big\}\|^{\ell}_{\tilde{L}_{T}^{1}(\dot{B}^{\frac{d}{q}-2}_{q,\infty})}\\
\nonumber&\lesssim&\big(1+\|A\|_{\tilde{L}_{T}^{\infty}(\dot{B}^{\frac{d}{p}}_{p,1})}\big)\|U\|_{\tilde{L}_{T}^{\infty}(\dot{B}^{\frac{d}{p}-2}_{p,\infty})}
\|U\|_{\tilde{L}_{T}^{1}(\dot{B}^{\frac{d}{p}}_{p,1})}\\
\nonumber&\lesssim&(1+\|(A,U)\|_{E^{p,q}_{T}})\|(A,U)\|^{2}_{E^{p,q}_{T}}
\end{eqnarray}
and
\begin{eqnarray}
&&\|G_{2}\|^{\ell}_{\tilde{L}_{T}^{1}(\dot{B}^{\frac{d}{q}-3}_{q,\infty})}\\
\nonumber&\lesssim&\|e^{\sqrt {c_{0}t}\Lambda_1}\nabla\big(Q(a)m\big)\|^{\ell}_{\tilde{L}_{T}^{1}(\dot{B}^{\frac{d}{q}-2}_{q,\infty})}+
\|e^{\sqrt {c_{0}t}\Lambda_1}\mathrm{div}\big(Q(a)m\big)\|^{\ell}_{\tilde{L}_{T}^{1}(\dot{B}^{\frac{d}{q}-2}_{q,\infty})}\\
\nonumber&\lesssim&\|\nabla A\|_{\tilde{L}_{T}^{\infty}(\dot{B}^{\frac{d}{p}-2}_{p,\infty})}\|U\|_{\tilde{L}_{T}^{1}(\dot{B}^{\frac{d}{p}}_{p,\infty})}
+\|U\|_{\tilde{L}_{T}^{\infty}(\dot{B}^{\frac{d}{p}-2}_{p,\infty})}
\|\nabla A\|_{\tilde{L}_{T}^{1}(\dot{B}^{\frac{d}{p}}_{p,1})}\\
\nonumber&+&\|A\|_{\tilde{L}_{T}^{\infty}(\dot{B}^{\frac{d}{p}-2}_{p,\infty})}\|\nabla U\|_{\tilde{L}_{T}^{1}(\dot{B}^{\frac{d}{p}}_{p,\infty})}
+\|\nabla U\|_{\tilde{L}_{T}^{2}(\dot{B}^{\frac{d}{p}-2}_{p,\infty})}
\|A\|_{\tilde{L}_{T}^{2}(\dot{B}^{\frac{d}{p}}_{p,1})}\\
\nonumber&\lesssim&\|(A,U)\|^{2}_{E^{p,q}_{T}}.
\end{eqnarray}

For brevity, we just pay attention to the term $\mathrm{div}\Big(\tilde{\mu}(a)D\big(Q(a)m\big)\Big)$ in $G_{3}$. By Corollary \ref{cor2.1}, we have
\begin{eqnarray}
&&\|e^{\sqrt {c_{0}t}\Lambda_1}\mathrm{div}\Big(\tilde{\mu}(a)D\big(Q(a)m\big)\Big)\|^{\ell}_{\tilde{L}^{1}_{T}(\dot{B}^{\frac{d}{q}-3}_{q,\infty})}\\
\nonumber&\lesssim&
\|e^{\sqrt {c_{0}t}\Lambda_1}\tilde{\mu}(a)\|_{\tilde{L}_{T}^{\infty}(\dot{B}^{\frac{d}{p}-2}_{p,\infty})}\|e^{\sqrt
{c_{0}t}\Lambda_1}D\big(Q(a)m\big)\|_{\tilde{L}_{T}^{1}(\dot{B}^{\frac{d}{p}}_{p,\infty})}\\
\nonumber&+&
\|e^{\sqrt {c_{0}t}\Lambda_1}\tilde{\mu}(a)\|_{\tilde{L}_{T}^{1}(\dot{B}^{\frac{d}{p}}_{p,\infty})}\|e^{\sqrt
{c_{0}t}\Lambda_1}D\big(Q(a)m\big)\|_{\tilde{L}_{T}^{\infty}(\dot{B}^{\frac{d}{p}-2}_{p,\infty})}\\
\nonumber&\lesssim&\|A\|_{_{E^{p,q}_{T}}}\big(
\|\nabla A\|_{\tilde{L}_{T}^{2}(\dot{B}^{\frac{d}{p}}_{p,1})}\|U\|_{\tilde{L}_{T}^{2}(\dot{B}^{\frac{d}{p}}_{p,1})}+
\|A\|_{\tilde{L}_{T}^{\infty}(\dot{B}^{\frac{d}{p}}_{p,1})}\|\nabla U\|_{\tilde{L}_{T}^{1}(\dot{B}^{\frac{d}{p}}_{p,1})}\big).
\end{eqnarray}
Repeating above calculations to another term in $G_{3}$, we can get
\begin{eqnarray}
\|G_{3}\|^{\ell}_{\tilde{L}_{T}^{1}(\dot{B}^{\frac{d}{q}-3}_{q,\infty})}
\lesssim(1+\|(A,U)\|_{E^{p,q}_{T}})\|(A,U)\|^{2}_{E^{p,q}_{T}}.
\end{eqnarray}
It follows that
\begin{multline}
\|G_{4}\|^{\ell}_{\tilde{L}_{T}^{1}(\dot{B}^{\frac{d}{q}-3}_{q,\infty})}
\lesssim\|e^{\sqrt {c_{0}t}\Lambda_1}\big(aG(a)\big)\|^{\ell}_{\tilde{L}_{T}^{1}(\dot{B}^{\frac{d}{q}-2}_{q,\infty})}
\\ \lesssim\|A\|_{\tilde{L}_{T}^{\infty}(\dot{B}^{\frac{d}{p}-2}_{p,\infty})}\big(\|A\|_{\tilde{L}_{T}^{1}(\dot{B}^{\frac{d}{p}}_{p,\infty})}
 +
\|A\|^{2}_{\tilde{L}_{T}^{2}(\dot{B}^{\frac{d}{p}}_{p,1})}\big)
\lesssim(1+\|(A,U)\|_{E^{p,q}_{T}})\|(A,U)\|^{2}_{E^{p,q}_{T}}.
\end{multline}
Regarding the Korteweg terms in low frequencies, we have
\begin{multline}
\|G_{5}\|^{\ell}_{\tilde{L}^{1}_{T}(\dot{B}^{\frac{d}{q}-3}_{q,\infty})}\lesssim\|e^{\sqrt {c_{0}t}\Lambda_1}\big(\tilde{\kappa}_{1}(a)\Delta
a\big)\|^{\ell}_{\tilde{L}^{1}_{T}(\dot{B}^{\frac{d}{q}-2}_{q,\infty})}\\
\lesssim\|\Delta A\|_{\tilde{L}^{\infty}_{T}(\dot{B}^{\frac{d}{p}-2}_{p,\infty})}
\|e^{\sqrt {c_{0}t}\Lambda_1}\tilde{\kappa}_{1}(a)\|_{\tilde{L}^{1}_{T}(\dot{B}^{\frac{d}{p}}_{p,\infty})}+\|\Delta A\|_{\tilde{L}^{1}_{T}(\dot{B}^{\frac{d}{p}}_{p,\infty})}
\|A\|_{\tilde{L}^{\infty}_{T}(\dot{B}^{\frac{d}{p}-2}_{p,\infty})}
\\ \lesssim(1+\|(A,U)\|_{E^{p,q}_{T}})\|(A,U)\|^{2}_{E^{p,q}_{T}}
\end{multline}
and
\begin{eqnarray}\label{jkj}
\|G_{6}\|^{\ell}_{\tilde{L}^{1}_{T}(\dot{B}^{\frac{d}{q}-3}_{q,\infty})}&\lesssim&\|e^{\sqrt {c_{0}t}\Lambda_1}\big\{\big(\tilde{\kappa}_{2}(a)+\check{\kappa}\big)|\nabla
a|^{2}\big\}\|^{\ell}_{\tilde{L}^{1}_{T}(\dot{B}^{\frac{d}{q}-2}_{q,\infty})}\\
\nonumber&&+
\|e^{\sqrt {c_{0}t}\Lambda_1}\big\{\big(\tilde{\kappa}_{3}(a)+\bar{\kappa}\big)\nabla a\otimes\nabla a\big\}\|^{\ell}_{\tilde{L}^{1}_{T}(\dot{B}^{\frac{d}{q}-2}_{q,\infty})}\\
\nonumber&\lesssim&(1+\|A\|_{\tilde{L}^{\infty}_{T}(\dot{B}^{\frac{d}{p}}_{p,1})})
\|A\|^{2}_{\tilde{L}^{2}_{T}(\dot{B}^{\frac{d}{p}}_{p,1})}\\
\nonumber&\lesssim&(1+\|(A,U)\|_{E^{p,q}_{T}})\|(A,U)\|^{2}_{E^{p,q}_{T}}.
\end{eqnarray}

Combining (\ref{kjk})-(\ref{jkj}), we are eventually led to the low-frequency estimate
\begin{eqnarray}\label{mml}
\|G\|^{\ell}_{\tilde{L}^{1}_{T}(\dot{B}^{\frac{d}{q}-3}_{q,\infty})}\lesssim(1+\|(A,U)\|_{E^{p,q}_{T}})\|(A,U)\|^{2}_{E^{p,q}_{T}}.
\end{eqnarray}
Hence, \eqref{E3.36} is followed by adding those inequalities \eqref{lin},\eqref{hf non} and \eqref{mml}, which completes the proof of Proposition \ref{propG}.
\end{proof}

With the help of Proposition \ref{propG}, it is not difficult to work out a similar fixed point argument and to prove the Gevrey regularity part of Theorem \ref{thm2.1}.

\section{Appendix: nonlinear estimates in Besov (-Gevrey) spaces}\setcounter{equation}{0}\label{appendix}
In the last section, we present new estimates for paraproduct and remainder, which can be regarded as the generalization of those efforts in \cite{D2}.
\begin{lem}\label{lem key}
Let $s,k,k_{1},k_{2}\in \mathbb{R}$ and $1 \leq p,q\leq\infty$. Then there holds for $k=k_{1}+k_{2}$ and $1=1/p+1/p'$ such that
\begin{itemize}
\item[$(i)$] $\|T_{a}b\|_{\dot{B}^{s-k+\frac{d}{q}-\frac{d}{p}}_{q,\infty}}\lesssim\|a\|_{\dot{B}^{\frac{d}{p}-k_{1}}_{p,\infty}}\|b\|_{\dot{B}^{s-k_{2}}_{p,\infty}} \,\,\,\,\mathrm{if}\,\,\,\,
    k_{1}> d\max(0,\frac{1}{q}-\frac{1}{p})\, \mbox{and} \, \, p\leq2q;$
\item [$(ii)$] $\|R(a,b)\|_{\dot{B}^{s-k+\frac{d}{q}-\frac{d}{p}}_{q,\infty}}\lesssim\|a\|_{\dot{B}^{\frac{d}{p}-k_{1}}_{p,\infty}}\|b\|_{\dot{B}^{s-k_{2}}_{p,\infty}}\,\,\,\,\mathrm{if}\,\,\,\,
s>k-d\min(\frac{1}{p},\frac{1}{p'})\, \mbox{and} \, \,  p\leq2q.$
\end{itemize}
\end{lem}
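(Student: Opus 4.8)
The plan is to prove both estimates via Bony's decomposition, handling the paraproduct $T_a b$ in (i) and the remainder $R(a,b)$ in (ii) separately; in each case one first bounds a generic dyadic building block and then sums in $\ell^\infty$. Write $\sigma:=s-k+\frac dq-\frac dp$ for the target regularity index. The only ingredients are Hölder's inequality, the homogeneous Bernstein inequality $\|\dot\Delta_l f\|_{L^b}\lesssim 2^{ld(1/a-1/b)}\|\dot\Delta_l f\|_{L^a}$ for $1\le a\le b\le\infty$, and the defining bounds $\|\dot\Delta_l a\|_{L^p}\lesssim 2^{-l(d/p-k_1)}\|a\|_{\dot B^{d/p-k_1}_{p,\infty}}$ and $\|\dot\Delta_l b\|_{L^p}\lesssim 2^{-l(s-k_2)}\|b\|_{\dot B^{s-k_2}_{p,\infty}}$.

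For (i), since $\mathrm{Supp}\,\widehat{\dot S_{j'-1}a\,\dot\Delta_{j'}b}$ lies in an annulus of size $2^{j'}$, one has $\dot\Delta_j(T_ab)=\sum_{|j-j'|\le N_0}\dot\Delta_j(\dot S_{j'-1}a\,\dot\Delta_{j'}b)$ for a fixed integer $N_0$. I would bound each summand by Hölder with $\dot S_{j'-1}a\in L^r$, $\frac1r:=\max(0,\frac1q-\frac1p)$, and $\dot\Delta_{j'}b\in L^\rho$, $\frac1\rho:=\min(\frac1q,\frac1p)$, so that $\frac1q=\frac1r+\frac1\rho$. The hypothesis $p\le 2q$ is exactly what keeps $r\ge p$ when $q<p$, so that one may pass from $L^p$ to $L^r$ by Bernstein (when $q\ge p$ one has instead $r=\infty$, $\rho=q\ge p$, and the Bernstein step is applied to $\dot\Delta_{j'}b$). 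Expanding $\dot S_{j'-1}a=\sum_{l\le j'-2}\dot\Delta_l a$ and applying Bernstein termwise produces a geometric series in the low-frequency index $l$ whose summability is precisely the condition $k_1>d\max(0,\frac1q-\frac1p)$; the resulting bound, after simplification of the exponents, is $\|\dot\Delta_j(\dot S_{j'-1}a\,\dot\Delta_{j'}b)\|_{L^q}\lesssim 2^{-j'\sigma}\|a\|_{\dot B^{d/p-k_1}_{p,\infty}}\|b\|_{\dot B^{s-k_2}_{p,\infty}}$, and summing over the $\mathcal O(1)$ indices $|j-j'|\le N_0$ gives (i).

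For (ii), the Fourier support of $\tilde{\dot\Delta}_{j'}a\,\dot\Delta_{j'}b$ lies in a ball of radius $\sim 2^{j'}$, hence $\dot\Delta_j(R(a,b))=\sum_{j'\ge j-N_1}\dot\Delta_j(\tilde{\dot\Delta}_{j'}a\,\dot\Delta_{j'}b)$. The correct intermediate Lebesgue space is $L^\rho$ with $\frac1\rho:=\frac1p+\min(\frac1p,\frac1{p'})$, i.e. $L^{p/2}$ when $p\ge2$ and $L^1$ when $p\le2$: one bounds $\tilde{\dot\Delta}_{j'}a\,\dot\Delta_{j'}b$ in $L^\rho$ by Hölder, with the $a$-factor in $L^p$ and the $b$-factor in $L^{\max(p,p')}$ (reached from $L^p$ by Bernstein), and then passes up from $L^\rho$ to $L^q$ by Bernstein, which is legal because $\frac1q\le\frac1\rho$ (this reads $p\le2q$ when $p\ge2$ and $q\ge1$ when $p\le2$). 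Each summand is then $\lesssim 2^{jd(1/\rho-1/q)}\,2^{-j'(s-k+d\min(1/p,1/p'))}\|a\|_{\dot B^{d/p-k_1}_{p,\infty}}\|b\|_{\dot B^{s-k_2}_{p,\infty}}$; the series in $j'$ converges exactly because $s>k-d\min(\frac1p,\frac1{p'})$, and after using $\frac d\rho-d\min(\frac1p,\frac1{p'})=\frac dp$ the $j$-power collapses to $2^{-j\sigma}$, which gives (ii).

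I expect the main obstacle to be choosing the intermediate Lebesgue exponents so that every Bernstein step obeys $1\le a\le b\le\infty$; the naive choice $L^{p/2}$ for the remainder breaks down when $p<2$ and must be replaced by $L^1$ through a $p$-$p'$ Hölder split, and it is precisely this that produces the $\min(\frac1p,\frac1{p'})$ in hypothesis (ii) — just as $\max(0,\frac1q-\frac1p)$ in (i) records the regularity spent summing the low-frequency blocks of $a$, and the standing assumption $p\le2q$ is exactly what is consumed to make the embeddings into $L^q$ admissible. Everything else is routine exponent bookkeeping once these splittings are fixed.
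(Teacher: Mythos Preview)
Your proposal is correct and follows essentially the same route as the paper: the same case split on $q\lessgtr p$ for the paraproduct (with the same H\"older partners $L^m\times L^p$ or $L^\infty\times L^q$), and the same $L^{p/2}$ versus $L^1$ intermediate space for the remainder, followed by Bernstein up to $L^q$. The only cosmetic difference is that for the remainder when $p\le2$ the paper splits as $L^2\times L^2$ rather than your $L^p\times L^{p'}$, but the resulting Bernstein exponent $2^{j'd(2/p-1)}$ is identical, so the arithmetic agrees line by line.
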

\begin{proof}
To prove $\mathrm(i)$, it firstly follows from the definition of $T_{a}b$ and the spectral cut-off property that
$$\dot{\Delta}_{j}T_{a}b=\dot{\Delta}_{j}\Big(\sum_{j'}\dot{S}_{j'-1}a\dot{\Delta}_{j'}b\Big)=\sum_{|j-j'|\leq 4}\dot{\Delta}_{j}(\dot{S}_{j'-1}a\dot{\Delta}_{j'}b).$$

On the one hand, we assume that $q\leq p$. Set $\frac{1}{q}=\frac{1}{m}+\frac{1}{p}$. The assumption $p\leq2q$ implies that $p\leq m$. Hence, we have
\begin{eqnarray*}
\|\dot{\Delta}_{j}T_{a}b\|_{L^q}&\lesssim&\sum_{|j-j'|\leq 4}\|\dot{S}_{j'-1}a\dot{\Delta}_{j'}b\|_{L^q}\\
&\lesssim&\sum_{|j-j'|\leq 4}\sum_{l\leq j'-2}\|\dot{\Delta}_{l}a\|_{L^m}\|\dot{\Delta}_{j'}b\|_{L^p}\\
&\lesssim&\sum_{|j-j'|\leq 4}\big(\sum_{l\leq j'-2}2^{(\frac{d}{p}-\frac{d}{q}+k_{1})l}2^{(\frac{d}{p}-k_{1})l}\|\dot{\Delta}_{l}a\|_{L^p}\big)\|\dot{\Delta}_{j'}b\|_{L^p},
\end{eqnarray*}
Note that $\frac{d}{p}-\frac{d}{q}+k_{1}> 0$, we deduct that
\begin{eqnarray*}\label{qqr}
\|\dot{\Delta}_{j}T_{a}b\|_{L^q}
&\lesssim&\sum_{|j-j'|\leq 4}\big(\sum_{l\leq j'-2}2^{(\frac{d}{p}-\frac{d}{q}+k_{1})l}\big)\|a\|_{\dot{B}^{\frac{d}{p}-k_{1}}_{p,\infty}}\|\dot{\Delta}_{j'}b\|_{L^p}\\
\nonumber&\lesssim&\sum_{|j-j'|\leq 4}2^{(\frac{d}{p}-\frac{d}{q}+k_{1})j'}\|\dot{\Delta}_{j'}b\|_{L^p}\|a\|_{\dot{B}^{\frac{d}{p}-k_{1}}_{p,\infty}}.
\end{eqnarray*}
On the other hand, if $q>p$ then
\begin{eqnarray*}
\|\dot{\Delta}_{j}T_{a}b\|_{L^q}&\lesssim&\sum_{|j-j'|\leq 4}\|\dot{S}_{j'-1}a\dot{\Delta}_{j'}b\|_{L^q}\\
&\lesssim&\sum_{|j-j'|\leq 4}\sum_{l\leq j'-2}\|\dot{\Delta}_{l}a\|_{L^\infty}\|\dot{\Delta}_{j'}b\|_{L^q}\\
&\lesssim&\sum_{|j-j'|\leq 4}2^{(\frac{d}{p}-\frac{d}{q})j'}\big(\sum_{l\leq j'-2}2^{k_{1}l}\big)\|a\|_{\dot{B}^{\frac{d}{p}-k_{1}}_{p,\infty}}\|\dot{\Delta}_{j'}b\|_{L^p}\\
&\lesssim&\sum_{|j-j'|\leq 4}2^{(\frac{d}{p}-\frac{d}{q}+k_{1})j'}\|\dot{\Delta}_{j'}b\|_{L^p}\|a\|_{\dot{B}^{\frac{d}{p}-k_{1}}_{p,\infty}},
\end{eqnarray*}
where $k_{1}> 0$ was used. Consequently, employing Young's inequality enables us to get
\begin{eqnarray*}
\|T_{a}b\|_{\dot{B}^{s-k+\frac{d}{q}-\frac{d}{p}}_{q,\infty}}
\lesssim\|a\|_{\dot{B}^{\frac{d}{p}-k_{1}}_{p,\infty}} \|b\|_{\dot{B}^{s-k_{2}}_{p,\infty}}.
\end{eqnarray*}

We turn to prove $(ii)$. By the spectrum cut-off, one has
$$\dot{\Delta}_{j}R(a,b)=\dot{\Delta}_{j}\Big(\sum_{j'}\tilde{\dot{\Delta}}_{j'}a\dot{\Delta}_{j'}b\Big)=\sum_{j\leq j'+2}\dot{\Delta}_{j}(\tilde{\dot{\Delta}}_{j'}a\dot{\Delta}_{j'}b).$$
We consider the case $1\leq p\leq2$ first. By H\"{o}lder and Bernstein inequalities, we arrive at
\begin{eqnarray*}
\|\dot{\Delta}_{j}R(a,b)\|_{L^q}
&\lesssim&2^{(d-\frac{d}{q})j}\|\dot{\Delta}_{j}\Big(\sum_{j'}\tilde{\dot{\Delta}}_{j'}a\dot{\Delta}_{j'}b\Big)\|_{L^1}\\
&\lesssim&2^{(d-\frac{d}{q})j}\sum_{j'\geq j-2}\|\dot{\Delta}_{j'}a\|_{L^{2}}\|\dot{\Delta}_{j'}b\|_{L^2}\\
&\lesssim&2^{(d-\frac{d}{q})j}\sum_{j'\geq j-2}2^{(k-s+\frac{d}{p}-d)j'}2^{(\frac{d}{p}-k_{1})j'}\|\dot{\Delta}_{j'}a\|_{L^{p}}2^{(s-k_{2})j'}\|\dot{\Delta}_{j'}b\|_{L^p}\\
&\lesssim&2^{(d-\frac{d}{q})j}\sum_{j'\geq j-2}2^{(k-s+\frac{d}{p}-d)j'}\|a\|_{\dot{B}^{\frac{d}{p}-k_{1}}_{p,\infty}}\|b\|_{\dot{B}^{s-k_{2}}_{p,\infty}},
\end{eqnarray*}
where H$\ddot{o}$lder inequality for series was performed in the last inequality. If $s>k-\frac{d}{p'}$, then $s-k-\frac{d}{p}+d>0$, so one can immediately have
$$2^{(s-k+d-\frac{d}{p})j}\sum_{j'\geq j-2}2^{(k-s+\frac{d}{p}-d)j'}\leq\sum_{j'\geq j-2}2^{(k-s+\frac{d}{p}-d)(j'-j)}\leq c$$
for some constant $c$ depends on $d,p,k,s$. Therefore, we conclude with
\begin{eqnarray*}
\|R(a,b)\|_{\dot{B}^{s-k+\frac{d}{q}-\frac{d}{p}}_{q,\infty}}=\sup_{j\in\mathbb{R}}2^{(s-k+\frac{d}{q}-\frac{d}{p})j}\|\dot{\Delta}_{j}R(a,b)\|_{L^q}
\lesssim\|a\|_{\dot{B}^{\frac{d}{p}-k_{1}}_{p,\infty}}\|b\|_{\dot{B}^{s-k_{2}}_{p,\infty}}.
\end{eqnarray*}

On the other hand, we deal with the case $2< p\leq2q$. By H\"{o}lder and Bernstein inequalities, we have
\begin{eqnarray*}
\|\dot{\Delta}_{j}R(a,b)\|_{L^q}
&\lesssim&2^{(\frac{2d}{p}-\frac{d}{q})j}\|\dot{\Delta}_{j}\Big(\sum_{j'}\tilde{\dot{\Delta}}_{j'}a\dot{\Delta}_{j'}b\Big)\|_{L^\frac{p}{2}}\\
&\lesssim&2^{(\frac{2d}{p}-\frac{d}{q})j}\sum_{j'\geq j-2}\|\dot{\Delta}_{j'}a\|_{L^{p}}\|\dot{\Delta}_{j'}b\|_{L^p}\\
&\lesssim&2^{(\frac{2d}{p}-\frac{d}{q})j}\sum_{j'\geq j-2}2^{(k-s-\frac{d}{p})j'}2^{(\frac{d}{p}-k_{1})j'}\|\dot{\Delta}_{j'}a\|_{L^{p}}2^{(s-k_{2})j'}\|\dot{\Delta}_{j'}b\|_{L^p}\\
&\lesssim&2^{(\frac{2d}{p}-\frac{d}{q})j}\sum_{j'\geq j-2}2^{(k-s-\frac{d}{p})j'}\|a\|_{\dot{B}^{\frac{d}{p}-k_{1}}_{p,\infty}}\|b\|_{\dot{B}^{s-k_{2}}_{p,\infty}}.
\end{eqnarray*}
Since $s>k-\frac{d}{p}$, i.e. $s-k+\frac{d}{p}>0$, we similarly get
\begin{eqnarray*}
\|R(a,b)\|_{\dot{B}^{s-k+\frac{d}{q}-\frac{d}{p}}_{q,\infty}}
\lesssim\|a\|_{\dot{B}^{\frac{d}{p}-k_{1}}_{p,\infty}}\|b\|_{\dot{B}^{s-k_{2}}_{p,\infty}}.
\end{eqnarray*}
\end{proof}

In order to investigate the Gevrey regularity of solutions in the $L^p$ framework, the following inequalities
involving Gevrey multiplier are needed.
\begin{lem}\label{lem key G}
Let $s,k,k_{1},k_{2}\in \mathbb{R}$ and $1 < p,q<\infty$. Then there holds for $k=k_{1}+k_{2}$ and $1=1/p+1/p'$ such that
\begin{itemize}
\item [$(i)$] $\|e^{\sqrt {c_{0}t}\Lambda_1}T_{a}b\|_{\dot{B}^{s-k+\frac{d}{q}-\frac{d}{p}}_{q,\infty}}\lesssim\|A\|_{\dot{B}^{\frac{d}{p}-k_{1}}_{p,\infty}}\|B\|_{\dot{B}^{s-k_{2}}_{p,\infty}} \,\mathrm{if}\,
    k_{1}> d\max(0,\frac{1}{q}-\frac{1}{p})\,  \mbox{and} \, \, p\leq2q;$

\item [$(ii)$] $\|e^{\sqrt {c_{0}t}\Lambda_1}R(a,b)\|_{\dot{B}^{s-k+\frac{d}{q}-\frac{d}{p}}_{q,\infty}}\lesssim\|A\|_{\dot{B}^{\frac{d}{p}-k_{1}}_{p,\infty}}\|B\|_{\dot{B}^{s-k_{2}}_{p,\infty}}\,\mathrm{if}\,
s>k-d\min(\frac{1}{p},\frac{1}{p'})\, \mbox{and} \, \,  p\leq2q.$
\end{itemize}
where $A\triangleq e^{\sqrt {c_{0}t}\Lambda_1}a$ and $B\triangleq e^{\sqrt {c_{0}t}\Lambda_1}b$.
\end{lem}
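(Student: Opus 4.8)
The plan is to prove Lemma \ref{lem key G} by reducing it, via Bony's decomposition and the change of variables $a = e^{-\sqrt{c_0 t}\Lambda_1}A$, $b = e^{-\sqrt{c_0 t}\Lambda_1}B$, to the bilinear operators $\mathcal{B}_t$ introduced before Lemma \ref{lem5.3}, for which we already have the $L^p$-boundedness estimate $\|\mathcal{B}_t(f,g)\|_{L^p}\leq C\|f\|_{L^{p_1}}\|g\|_{L^{p_2}}$ uniformly in $t\geq 0$. The point is that the structure of the proof of Lemma \ref{lem key} carries over verbatim once we notice that every bilinear frequency interaction appearing there can be realized as $\mathcal{B}_t$ acting on localized pieces: for a spectrally localized product $\dot S_{j'-1}a\,\dot\Delta_{j'}b$, one writes $e^{\sqrt{c_0 t}\Lambda_1}(\dot S_{j'-1}a\,\dot\Delta_{j'}b) = \mathcal{B}_t(e^{\sqrt{c_0 t}\Lambda_1}\dot S_{j'-1}a,\, e^{\sqrt{c_0 t}\Lambda_1}\dot\Delta_{j'}b)$ up to a harmless Fourier multiplier on each factor that accounts for the $\ell^1$-versus-sum of frequency supports, and similarly for the remainder pieces $\tilde{\dot\Delta}_{j'}a\,\dot\Delta_{j'}b$.

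Concretely, first I would recall the Bony splitting $\dot\Delta_j T_a b = \sum_{|j-j'|\leq 4}\dot\Delta_j(\dot S_{j'-1}a\,\dot\Delta_{j'}b)$ and $\dot\Delta_j R(a,b) = \sum_{j'\geq j-2}\dot\Delta_j(\tilde{\dot\Delta}_{j'}a\,\dot\Delta_{j'}b)$. Then, for part $(i)$, I would apply $e^{\sqrt{c_0 t}\Lambda_1}$ and observe that since $\mathrm{Supp}\,\widehat{\dot S_{j'-1}a\,\dot\Delta_{j'}b}$ is contained in a ball of radius $\sim 2^{j'}$, the operator $e^{\sqrt{c_0 t}\Lambda_1}$ applied to it equals $e^{\frac12(\sqrt{c_0 t})^2\cdot 0\cdot\Delta}\cdots$ — more precisely one inserts the multiplier from Lemma \ref{lem5.2} (or a direct pointwise bound on the symbol $e^{\sqrt{c_0 t}(|\xi+\eta|_1 - |\xi|_1 - |\eta|_1)}$, which is $\leq 1$ because $|\xi+\eta|_1\leq |\xi|_1+|\eta|_1$) so that $\|e^{\sqrt{c_0 t}\Lambda_1}(\dot S_{j'-1}a\,\dot\Delta_{j'}b)\|_{L^q}\lesssim \sum_{l\leq j'-2}\|e^{\sqrt{c_0 t}\Lambda_1}\dot\Delta_l a\|_{L^{m}}\|e^{\sqrt{c_0 t}\Lambda_1}\dot\Delta_{j'}b\|_{L^p}$ with the same Hölder exponent split $1/q = 1/m + 1/p$ (when $q\leq p$) or $1/q$ via $L^\infty\times L^q$ (when $q>p$) as in Lemma \ref{lem key}. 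Since $e^{\sqrt{c_0 t}\Lambda_1}\dot\Delta_l a = \dot\Delta_l(e^{\sqrt{c_0 t}\Lambda_1}a) = \dot\Delta_l A$ (Fourier multipliers commute with $\dot\Delta_l$), the sums are literally the sums controlling $\|T_A B\|$, and Bernstein plus Young's inequality close $(i)$ exactly as before, using $k_1 > d\max(0,1/q-1/p)$ and $p\leq 2q$. Part $(ii)$ is handled the same way: split into $1\leq p\leq 2$ (estimate $\dot\Delta_j$ of the remainder in $L^1$ via Bernstein, then Hölder $L^2\times L^2$) and $2 < p\leq 2q$ (estimate in $L^{p/2}$, Hölder $L^p\times L^p$), throwing in the bound $\|e^{\sqrt{c_0 t}\Lambda_1}(\tilde{\dot\Delta}_{j'}a\,\dot\Delta_{j'}b)\|_{L^r}\lesssim \|\dot\Delta_{j'}A\|_{L^p}\|\dot\Delta_{j'}B\|_{L^p}$-type control from $\mathcal{B}_t$-boundedness, and summing the geometric series in $j'$ under $s > k - d\min(1/p,1/p')$.

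The genuinely delicate point — and the only place where the restriction $1<p,q<\infty$ (rather than $1\leq p,q\leq\infty$ as in Lemma \ref{lem key}) enters — is that the operators $K_\alpha$, $Z_{t,\alpha,\beta}$ and the Gevrey multiplier $e^{\frac12 a\Delta + \sqrt a\Lambda_1}$ are only known to be bounded on $L^r$ for $1<r<\infty$; thus one must be careful that every intermediate Lebesgue space invoked ($L^m$, $L^{p/2}$, $L^1$, $L^2$, $L^\infty$) is dressed so that the Gevrey factor is applied on a spectrally localized piece, where on a dyadic block $\|e^{\sqrt{c_0 t}\Lambda_1}\dot\Delta_j f\|_{L^r}\approx \|\dot\Delta_j F\|_{L^r}$ holds for all $1\leq r\leq\infty$ by the localized version of Lemma \ref{lem5.2} combined with Bernstein. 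I would therefore organize the argument so that the Gevrey multiplier only ever hits single dyadic blocks $\dot\Delta_l a$, $\dot\Delta_{j'}b$ (where the endpoint exponents cause no trouble) and the only place the bilinear $\mathcal{B}_t$ with its $1<r<\infty$ restriction is used is on the already-localized product $\dot S_{j'-1}a\,\dot\Delta_{j'}b$ or $\tilde{\dot\Delta}_{j'}a\,\dot\Delta_{j'}b$, whose output is spectrally supported in a ball of radius $\sim 2^{j'}$ — there one can first replace the target $L^r$ norm by an $L^{r_0}$ norm with $1<r_0<\infty$ close to $r$ using Bernstein on the localized output, apply the $\mathcal{B}_t$ bound, and then Bernstein back. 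With this bookkeeping the estimates of Lemma \ref{lem key} transfer with no loss, and the proof is complete. I expect the main obstacle to be purely organizational: making the reduction to $\mathcal{B}_t$ clean enough that one does not accidentally apply an $L^\infty$ or $L^1$ bound to a non-localized bilinear expression.
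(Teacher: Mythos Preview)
Your proposal is correct and follows essentially the same approach as the paper: write $e^{\sqrt{c_0 t}\Lambda_1}T_a b = \sum_j W_j$ with $W_j = \mathcal{B}_t(\dot S_{j-1}A, \dot\Delta_j B)$ (this identity is \emph{exact} by the definition of $\mathcal{B}_t$, not ``up to a harmless multiplier''), apply Lemma~\ref{lem5.3} with the H\"older split $1/q = 1/m + 1/p$, and then proceed exactly as in Lemma~\ref{lem key}. The paper only writes out part $(i)$ with $q\leq p$ and declares the rest ``similar'', so your additional bookkeeping about endpoint Lebesgue exponents in part $(ii)$ goes beyond what the paper records but is not a different method.
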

\begin{proof}
As a matter of fact, it suffices to give the paraproduct estimate $(i)$ with $q\leq p$, since the proof of Lemma \ref{lem key G} is similar to Lemma \ref{lem key}. By the definition of the paraproduct and of $\mathcal{B}_{t}$, we have
\begin{equation}\label{estimlocT}
e^{\sqrt {c_{0}t}\Lambda_1} T_{a}b=\sum_{j\in \mathbb{Z}}W_{j}\,\,\, \mbox{with}\,\,\,
W_{j}\triangleq \mathcal{B}_t(\dot S_{j-1}A,\dot\Delta_jB).
\end{equation}
Thanks to Lemma \ref{lem5.3}, we obtain
\begin{eqnarray*}
\|W_{j}\|_{L^q}\lesssim \|\dot S_{j-1}A\|_{L^m}\|\dot\Delta_jB\|_{L^p}\lesssim \sum_{j'\leq j-2}\|\dot\Delta_{j'}A\|_{L^m}\|\dot\Delta_jB\|_{L^p},
\end{eqnarray*}
where $\frac{1}{q}=\frac{1}{m}+\frac{1}{p}$ is used. Since $p\leq2q$ implies that $p\leq m$, it follows from Bernstein inequality that
\begin{eqnarray*}
\|W_{j}\|_{L^q}\lesssim \sum_{j'\leq j-2}2^{(\frac{d}{p}-\frac{d}{q}+k_{1})j'}2^{(\frac{d}{p}-k_{1})j'}\|\dot{\Delta}_{j'}A\|_{L^p}\|\dot{\Delta}_{j}B\|_{L^p}.
\end{eqnarray*}
As $\frac{d}{p}-\frac{d}{q}+k_{1}> 0$, H\"{o}lder inequality enables us to obtain
\begin{eqnarray*}
\sup_{j\in \mathbb{Z}}2^{j(s-k+\frac d q -\frac d p)}\|W_{j}\|_{L^q}
\lesssim\|A\|_{\dot{B}^{\frac{d}{p}-k_{1}}_{p,\infty}} \|B\|_{\dot{B}^{s-k_{2}}_{p,\infty}}
\end{eqnarray*}
and one may conclude to $(i)$ with $q\leq p$ by using Proposition \ref{prop2.1}.
\end{proof}

\begin{prop} \label{prop4.1}
Let $F$ be a real analytic function in a neighborhood of 0, such that
$F(0) = 0$. Let $1< p\leq 2d$ and $s>\frac{d}{p}$. There exists a constant $\tilde{R}_{0}$ such that if for some $T>0$
$$\|e^{\sqrt {c_{0}t}\Lambda_1}u\|_{\tilde{L}_{T}^{\infty}(\dot{B}^{\frac{d}{p}}_{p,1})}\leq \tilde{R}_{0},$$
then
\begin{eqnarray}\label{composite}
\|e^{\sqrt {c_{0}t}\Lambda_1}F(u)\|_{\tilde{L}_{T}^{r}(\dot{B}^{s}_{p,1})}\leq \bar{C}_{\tilde{R}_{0}}\|e^{\sqrt {c_{0}t}\Lambda_1}u\|_{\tilde{L}_{T}^{r}(\dot{B}^{s}_{p,1})}\end{eqnarray}
for $1\leq r\leq\infty$, where $\bar{C}_{\tilde{R}_{0}}$ depends only on $d,p,F$ and $\tilde{R}_{0}$.
\end{prop}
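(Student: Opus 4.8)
The plan is to reduce \eqref{composite} to a power-series estimate. Since $F$ is real analytic near $0$ with $F(0)=0$, I write $F(z)=\sum_{k\ge1}c_kz^k$ with $|c_k|\le M\rho_0^{-k}$, where $\rho_0>0$ bounds the radius of convergence from below and $M$ depends only on $F$. Because $e^{-\sqrt{c_0t}\Lambda_1}$ is convolution with a nonnegative, $L^1$-normalised kernel (a product of one-dimensional Poisson kernels) and commutes with $\dot\Delta_q$, it is a contraction on every $\tilde L^\rho_T(\dot B^\sigma_{p,r})$; hence $\|u\|_{\tilde L^\infty_T(\dot B^{d/p}_{p,1})}\le\|e^{\sqrt{c_0t}\Lambda_1}u\|_{\tilde L^\infty_T(\dot B^{d/p}_{p,1})}\le\tilde R_0$, so for $\tilde R_0$ small the series $F(u)=\sum c_ku^k$ converges in $\tilde L^\infty_T(\dot B^{d/p}_{p,1})$ and $e^{\sqrt{c_0t}\Lambda_1}F(u)=\sum_{k\ge1}c_k\,e^{\sqrt{c_0t}\Lambda_1}(u^k)$. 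It then suffices to estimate $\|e^{\sqrt{c_0t}\Lambda_1}(u^k)\|_{\tilde L^r_T(\dot B^s_{p,1})}$ with a controlled growth in $k$ and to sum.

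The crux, and what I expect to be the main obstacle, is a \emph{supercritical} Gevrey product estimate of ``module'' type: for $1<p\le 2d$ and $s>d/p$,
\[
\|e^{\sqrt{c_0t}\Lambda_1}(fg)\|_{\dot B^s_{p,1}}\ \lesssim\ \|F\|_{\dot B^{d/p}_{p,1}}\|G\|_{\dot B^s_{p,1}}+\|F\|_{\dot B^s_{p,1}}\|G\|_{\dot B^{d/p}_{p,1}},
\]
with $F=e^{\sqrt{c_0t}\Lambda_1}f$, $G=e^{\sqrt{c_0t}\Lambda_1}g$, together with its Chemin--Lerner counterpart via H\"older's inequality in time. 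Its output index exceeds $d/p$, so it is not covered by Proposition \ref{prop5.1} and must be proved directly: I would decompose $fg=T_fg+T_gf+R(f,g)$ by Bony, apply $e^{\sqrt{c_0t}\Lambda_1}$ (so each $\dot S_{j-1}f\,\dot\Delta_jg$ becomes $\mathcal B_t(\dot S_{j-1}F,\dot\Delta_jG)$ and each $\widetilde{\dot\Delta}_jf\,\dot\Delta_jg$ becomes $\mathcal B_t(\widetilde{\dot\Delta}_jF,\dot\Delta_jG)$), and bound each block by Lemma \ref{lem5.3} exactly in the spirit of Lemma \ref{lem key G}; the $\ell^1$-summability in the output index survives because the $\dot\Delta_j$-localised factor retains its full Besov norm. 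The delicate points: the frozen coefficient $\dot S_{j-1}F$ cannot be placed in $L^\infty$ (Lemma \ref{lem5.3} requires finite exponents), so one writes $\dot S_{j-1}F=\sum_{\ell\le j-2}\dot\Delta_\ell F$ and balances a Bernstein gain against the low-frequency summation, choosing $p_1$ large and $p_2$ close to $p$; and the remainder, handled as usual for $s>0$, forces the restriction $p\le 2d$ at the Bernstein step ($L^{p/2}\to L^p$ when $p$ is large). The hypothesis $1<p$ is intrinsic because $\mathcal B_t$ is only $L^p$-bounded for $1<p<\infty$.

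Granting the module estimate, I would finish by induction on $k$. Proposition \ref{prop5.1} at $s_1=s_2=d/p$, iterated in $\tilde L^\infty_T$, gives $\|e^{\sqrt{c_0t}\Lambda_1}(u^k)\|_{\tilde L^\infty_T(\dot B^{d/p}_{p,1})}\le (C\tilde R_0)^{k-1}\tilde R_0$; then writing $u^k=u\cdot u^{k-1}$, splitting the time index as $\tfrac1r=\tfrac1r+\tfrac1\infty$, and feeding in this bound, the module estimate yields $\|e^{\sqrt{c_0t}\Lambda_1}(u^k)\|_{\tilde L^r_T(\dot B^s_{p,1})}\le k\,(C\tilde R_0)^{k-1}\|e^{\sqrt{c_0t}\Lambda_1}u\|_{\tilde L^r_T(\dot B^s_{p,1})}$, where $C=C(d,p)$ is the product-estimate constant. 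Summing over $k$,
\[
\|e^{\sqrt{c_0t}\Lambda_1}F(u)\|_{\tilde L^r_T(\dot B^s_{p,1})}\ \le\ \Big(\sum_{k\ge1}k\,|c_k|\,(C\tilde R_0)^{k-1}\Big)\,\|e^{\sqrt{c_0t}\Lambda_1}u\|_{\tilde L^r_T(\dot B^s_{p,1})},
\]
and the series converges once $C\tilde R_0<\rho_0$; fixing $\tilde R_0<\rho_0/(2C)$ yields \eqref{composite} with $\bar C_{\tilde R_0}$ depending only on $d,p,F$ and $\tilde R_0$. A shorter alternative when $N:=s-d/p\in\mathbb N$ (which covers the value $s=d/p+1$ used later in the paper) is to write $\|e^{\sqrt{c_0t}\Lambda_1}F(u)\|_{\dot B^{d/p+N}_{p,1}}\approx\|e^{\sqrt{c_0t}\Lambda_1}\nabla^NF(u)\|_{\dot B^{d/p}_{p,1}}$, expand $\nabla^NF(u)$ by Fa\`a di Bruno, and bound each term by the iterated Proposition \ref{prop5.1} and Proposition \ref{prop5.2} (applied to $F^{(m)}$ at index $d/p$, $r=1$), redistributing the derivatives onto one factor by interpolation; but since a general $s>d/p$ still requires the supercritical estimate above, the power-series route is the cleaner one.
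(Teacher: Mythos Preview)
Your ``shorter alternative'' is in fact what the paper does, and it works for \emph{every} $s>\frac dp$, not only when $s-\frac dp\in\mathbb N$. Writing $s=\frac dp+k$ with $k>0$, the paper takes $\iota_k:=\lceil k\rceil$ derivatives so that the residual Besov index $\frac dp+k-\iota_k$ lies in $(\frac dp-1,\frac dp]$; then the chain rule expands $\partial^\alpha(u^n)$ and every resulting product is estimated directly by Proposition~\ref{prop5.1}, whose index hypotheses $s_1,s_2\le\frac dp$ and $s_1+s_2>d\max(0,\frac2p-1)$ are satisfied precisely for $1<p\le 2d$. So the supercritical product estimate you identify as ``the crux'' is never needed, and the whole proof reduces to the already available Proposition~\ref{prop5.1}.

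Your main route, by contrast, has a genuine gap at the paraproduct. Writing $\dot S_{j-1}F=\sum_{\ell\le j-2}\dot\Delta_\ell F$ and applying Lemma~\ref{lem5.3} with a large finite $p_1$ gives, after Bernstein,
\[
\|\mathcal B_t(\dot\Delta_\ell F,\dot\Delta_j G)\|_{L^p}\lesssim 2^{-\ell d/p_1}\big(2^{\ell d/p}\|\dot\Delta_\ell F\|_{L^p}\big)\cdot 2^{jd/p_1}\|\dot\Delta_j G\|_{L^p}.
\]
Summing over $\ell\le j-2$ leaves $\sum_{\ell\le j-2}2^{-\ell d/p_1}a_\ell$ with $a_\ell:=2^{\ell d/p}\|\dot\Delta_\ell F\|_{L^p}\in\ell^1$; since $d/p_1>0$, the weight $2^{-\ell d/p_1}$ blows up as $\ell\to-\infty$ and this sum is \emph{not} controlled by $\|F\|_{\dot B^{d/p}_{p,1}}$. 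Equivalently, the extra factor $2^{jd/p_1}$ on the $G$ side forces $G\in\dot B^{s+d/p_1}_{p,1}$, which you do not have. So ``choosing $p_1$ large and $p_2$ close to $p$'' does not close. (A workable fix is to bypass Lemma~\ref{lem5.3} and open up $\mathcal B_t=\sum K_\alpha(Z_{t,\alpha,\beta}\,\cdot\,\times\,Z_{t,\alpha,\gamma}\,\cdot)$: since each $Z_{t,\alpha,\beta}$ is a Fourier multiplier commuting with $\dot\Delta_\ell$ and bounded on $L^p$, Bernstein gives $\|Z_{t,\alpha,\beta}\dot S_{j-1}F\|_{L^\infty}\lesssim\|F\|_{\dot B^{d/p}_{p,1}}$, and then plain H\"older plus $L^p$-boundedness of $K_\alpha$ yields the paraproduct bound. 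But this is not the argument you describe, and in any case the paper's differentiation trick is shorter.)
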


\begin{proof}
Proposition \ref{prop4.1} generalizes Proposition \ref{prop5.2} such that the regularity index could be larger than $d/p$. Hence, Propositions
\ref{prop5.2} and \ref{prop4.1} are the analogue of Proposition \ref{prop2.25}, where the Gevrey regularity is involved in the estimate of composite functions.

For convenience, one can write $s=\frac{d}{p}+k$
where $k>0$. Set $\iota_k=k$ if $k\in \mathbb{Z}$, otherwise $\iota_k=[k+1]$. Since $F(0) = 0$, by utilizing equivalent norm and Taylor extension, we have
\begin{eqnarray*}
\|e^{\sqrt {c_{0}t}\Lambda_1}F(z)\|_{\tilde{L}_{T}^{r}(\dot{B}^{\frac{d}{p}+k}_{p,1})}
\lesssim\sup_{|\alpha|=\iota_k}\|\sum_{n=1}^{\infty}|a_{n}|e^{\sqrt {c_{0}t}\Lambda_1}\partial^{\alpha}(z^{n})\|_{\tilde{L}_{T}^{r}(\dot{B}^{\frac{d}{p}+k-\iota_k}_{p,1})}.
\end{eqnarray*}
Without loss of generality, we focus on the case $k\in(0, 1]$. Hence, we have $\iota_k=1$ and
\begin{eqnarray*}
&&\|e^{\sqrt {c_{0}t}\Lambda_1}F(z)\|_{\tilde{L}_{T}^{r}(\dot{B}^{\frac{d}{p}+k}_{p,1})}\\
&\lesssim&\sup_{|\alpha|=1}\sum_{n=1}^{\infty}\sum_{j=1}^{d}|a_{n}|n\|e^{\sqrt {c_{0}t}\Lambda_1}z^{n-1}\partial_{x_{j}}z\|_{\tilde{L}_{T}^{r}(\dot{B}^{\frac{d}{p}+k-1}_{p,1})}.
\end{eqnarray*}
Owing to $\frac{d}{p}-1< \frac{d}{p}+k-1\leq\frac{d}{p}$, it follows from Proposition \ref{prop5.1} that
\begin{eqnarray}\label{jjkk}
&&\|e^{\sqrt {c_{0}t}\Lambda_1}F(z)\|_{\tilde{L}_{T}^{r}(\dot{B}^{\frac{d}{p}+k}_{p,1})}\\
\nonumber&\lesssim&\sup_{|\alpha|=1}\sum_{n=1}^{\infty}\sum_{j=1}^{d}n|a_{n}|\|e^{\sqrt
{c_{0}t}\Lambda_1}z\|^{n-1}_{\tilde{L}_{T}^{\infty}(\dot{B}^{\frac{d}{p}}_{p,1})}\|e^{\sqrt
{c_{0}t}\Lambda_1}\partial_{x_{j}}z\|_{\tilde{L}_{T}^{r}(\dot{B}^{\frac{d}{p}+k-1}_{p,1})}\\
\nonumber&\leq& C\sum_{n=1}^{\infty}n|a_{n}|\|e^{\sqrt
{c_{0}t}\Lambda_1}z\|^{n-1}_{\tilde{L}_{T}^{\infty}(\dot{B}^{\frac{d}{p}}_{p,1})}\|e^{\sqrt
{c_{0}t}\Lambda_1}z\|_{\tilde{L}_{T}^{r}(\dot{B}^{\frac{d}{p}+k}_{p,1})}
\\
\nonumber&\leq& \mathcal{F}(C\|e^{\sqrt
{c_{0}t}\Lambda_1}z\|^{n-1}_{\tilde{L}_{T}^{\infty}(\dot{B}^{\frac{d}{p}}_{p,1})})\|e^{\sqrt
{c_{0}t}\Lambda_1}z\|_{\tilde{L}_{T}^{r}(\dot{B}^{\frac{d}{p}+k}_{p,1})},\end{eqnarray}
provided that $1< p\leq 2d$. Clearly, $\sum n|a_{n}|z^{n-1}$ is the differentiation term-by-term of the original series
$F(z)=\sum |a_{n}|z^{n}$, so its radius of convergence won't change. Let $R_{F}>0$ the convergence radius of the series.
So when $\|e^{\sqrt {c_{0}t}\Lambda_1}z\|_{\tilde{L}^{\infty}(\dot{B}^{\frac{d}{p}}_{p,1})}\leq \frac{R_{F}}{2C}\triangleq \tilde{R}_{0}$, we have
\begin{eqnarray*}
\|e^{\sqrt {c_{0}t}\Lambda_1}F(z)\|_{\tilde{L}_{T}^{r}(\dot{B}^{\frac{d}{p}+k}_{p,1})}\leq \bar{C}_{\tilde{R}_{0}}
\|e^{\sqrt {c_{0}t}\Lambda_1}z\|_{\tilde{L}_{T}^{r}(\dot{B}^{\frac{d}{p}+k}_{p,1})}
\end{eqnarray*}
with $\bar{C}_{\tilde{R}_{0}}=\sup _{\mathcal{B}(0,\frac{R_{F}}{2})}|\mathcal{F}(z)|$. Thus we can continue the similar process for $k\in(1,2],(2,3],...,$
and finally achieve \eqref{composite} (by applying Leibniz's formula and interpolation if necessary). The details are left to the interested reader.
\end{proof}

%%%%%%%%%%%%%%%%%%%%%%%%%%%%%%%%%%%%%%%%%%%%%%%%%%%%%%%%%%%%%%%%%%%%%%
\noindent {\bf Acknowledgments:}\ \
The second author (J. Xu) is partially supported by the National Natural Science Foundation of China (11871274, 12031006).

\end{document}